\newcommand{\N}{\mathbb{N}}
\newcommand{\Z}{\mathbb{Z}}
\newcommand{\R}{\mathbb{R}}
\newcommand{\PP}{\mathcal{P}}
\newcommand{\inte}{\mathrm{int}}
\newcommand{\id}{\mathrm{id}}
\newcommand{\conv}{\mathrm{conv}}
\newcommand{\ar}{\rightarrow}
\newtheorem{thm}{Theorem}[section]
\newtheorem{prop}[thm]{Proposition}
\newtheorem{lem}[thm]{Lemma}
\newtheorem{cor}[thm]{Corollary}
\theoremstyle{definition} 
\newtheorem{assu}[thm]{Assumption}
\newtheorem{dfn}[thm]{Definition}
\newtheorem{rem}[thm]{Remark}
\newtheorem{obs}[thm]{Observation}
\newtheorem{exa}[thm]{Example}
\title[Cannon-Thurston maps for Coxeter groups with signature $(n-1,1)$]
{Cannon-Thurston maps for Coxeter groups \\ with signature $(n-1,1)$}
\author[Ryosuke Mineyama]{Ryosuke Mineyama}
\thanks{
\hspace{-0.5cm}{\bf 2010 Mathematics Subject Classification:} 
Primary 20F55, 51F15; Secondary 05E15. \\
{\bf Keywords:} Coxeter group, Limit set, Cannon-Thurston map. }
\address{Ryosuke Mineyama, 
Department of Mathematics, 
Graduate School of Science,
Osaka University,
Toyonaka, Osaka 560-0043, Japan}
\email{r-mineyama@cr.math.sci.osaka-u.ac.jp}
\begin{document}

\begin{abstract}
For a Coxeter group $W$ we have an associating bi-linear form $B$ on suitable real vector space.
We assume that $B$ has the signature $(n-1,1)$ and all the bi-linear form 
associating rank $n' (\ge 3)$
Coxeter subgroups generated by subsets of $S$ has the signature $(n',0)$ or $(n'-1,1)$.
Under these assumptions, we see that there exists the Cannon-Thurston map for $W$,
that is, the $W$-equivariant continuous surjection from the Gromov boundary of $W$ to the limit set 
of $W$.
To see this we construct an isometric action of $W$ on an ellipsoid with 
the Hilbert metric.
As a consequence, we see that the limit set of $W$ coincides with the set of accumulation points
of roots of $W$.
\end{abstract}

\maketitle


\section{Introduction}
A new dynamical approach to analyze the asymptotic behavior of the 
root system associating a Coxeter group
has been introduced by Hohlweg, Labb\'e and Ripoll in \cite{hlr}.
This approach implicate a study of infinite Coxeter groups
from a dynamical viewpoint. 
In this paper we analyze the asymptotic behavior of 
the orbit of a point under the action of a infinite Coxeter group not only for the roots.
As is known in the theory of the Kleinian groups,
to study accumulation points is nothing 
but to study the interaction between ergodic theory and discrete groups.
In order to establish that theory, the hyperbolicity of its phase space plays a crucial role.
For the case where the associated matrices have signature $(n-1,1)$, 
Coxeter groups also act on hyperbolic space in the sense of Gromov.
In \cite{hpr}, the authors argue the connection between the theory of the Klieinian
groups and the Coxeter groups via the hyperbolic geometry.
They investigated isometrical actions on hyperbolic spaces and 
showed the limit sets of Coxeter systems of type $(n-1,1)$ coincide with
the set of accumulation points of its roots. 
In this paper, we also focus on our attention to infinite Coxeter groups 
whose bi-linear form $(n-1,1)$.

\subsection{Known results and Motivation}
In general,
a continuous equivariant between boundaries of a discrete group and 
their limit set is called a Cannon-Thurston map.
The existence of such map is one of the most interesting question 
in the group theory from a geometrical viewpoint.
For the Kleinian groups several authors contributed to this topic.
In particular recently Mj showed that for Kleinan surface groups 
(in fact for all finitely generated Kleinian groups) 
there exist the Cannon-Thurston maps and local connectivity of the connected limit sets \cite{Mj}.

From more general point of view Mitra considered
the Cannon-Thurston map for Gromov hyperbolic groups.
Let $H$ be a hyperbolic subgroup of a hyperbolic group $G$ in the sense of Gromov.
He asked whether the inclusion map always extends continuously to the equivariant map between the 
Gromov compactifications $\widehat{H}$ and $\widehat{G}$.
For this question he positively answered in the case when $H$ is
an infinite normal subgroup of a hyperbolic group $G$ \cite{Mit1}.
He also proved that
the existence of the Cannon-Thurston map when $G$ is
a hyperbolic group acting cocompactly on a simplicial tree $T$ 
such that all vertex and edge stabilizers are hyperbolic,
and $H$ is the stabilizer of a vertex or edge of $T$ provided 
every inclusion of an edge stabilizer in a vertex stabilizer is a quasi isometric embedding
\cite{Mit2}.
On the other hand, Baker and Riley constructed a negative example for Mitra's question.
In fact they proved that there exits a free subgroup of rank 3 in 
a hyperbolic group such that the Cannon-Thurston map is not well-defined \cite{BR}.
Adding to this Matsuda and Oguni showed that 
a similar phenomenon occurs for every non-elementary relatively hyperbolic group
\cite{MO}.

Inspired by the above results we shall consider the problem which asks whether 
the Cannon-Thurston map for the Coxeter groups exists.

\subsection{Results in this paper}
In this paper we prove the following.

\begin{thm}\label{main}
	Let $W$ be a Coxeter group of rank $n$ whose associating bi-linear form $B$ has
	signature $(n-1,1)$ and $S$ be its generating set.
	Let $\partial_G W$ be the Gromov boundary of $W$ and let 
	$\Lambda(W)$ be the limit set of $W$.
	If all the bi-linear form  associating rank $n' (\ge 3)$
	bi-linear forms of Coxeter subgroups generated by subsets of $S$ 
	are positive definite or have signature $(n'-1,1)$,
	then we have $W$-equivariant, continuous surjection 
	$F : \partial_G W \ar \Lambda(W)$.
\end{thm}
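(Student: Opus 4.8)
The plan is to realize the Cannon-Thurston map as the boundary extension of the orbit map of $W$ acting on the ellipsoid, and to verify the extension criterion of Mitra. The first step is to establish that $W$ is word-hyperbolic. The hypothesis that every rank $n' (\ge 3)$ reflection subgroup has positive definite form or form of signature $(n'-1,1)$ excludes affine (corank-one) reflection subgroups of rank at least three, and hence the presence of $\Z^2$ subgroups; by Moussong's criterion this is precisely what guarantees that $(W,S)$ is Gromov hyperbolic, so that $\partial_G W$ is well defined and $\mathrm{Cay}(W,S)$ is a proper geodesic $\delta$-hyperbolic space.

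Next, using the isometric action of $W$ on the ellipsoid $\mathcal{E}$ with the Hilbert metric $d_{\HH}$ constructed earlier --- which is a model of $\h^{n-1}$ --- I would study the orbit map $\iota \colon (W, d_S) \ar (\mathcal{E}, d_{\HH})$, $w \mapsto w x_0$, for a fixed interior basepoint $x_0$. Since $S$ is finite, each generator displaces $x_0$ a bounded amount, so $\iota$ is coarsely Lipschitz; since $W$ is discrete in the isometry group of the proper space $\mathcal{E}$, its action is properly discontinuous with finite point stabilizers, whence $\iota$ is proper. By definition $\Lambda(W)$ is the accumulation set of $\iota(W)$ in $\partial \mathcal{E}$.

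With $\mathrm{Cay}(W,S)$ and $\mathcal{E}$ both proper geodesic hyperbolic spaces and $\iota$ proper, I would invoke Mitra's criterion: it suffices to produce $M \colon \R_{\geq 0} \ar \R_{\geq 0}$ with $M(R) \ar \infty$ as $R \ar \infty$, such that every word geodesic $\gamma$ lying outside the $R$-ball about the identity is carried by $\iota$ to a path lying outside the $M(R)$-ball about $x_0$ in $\mathcal{E}$. Granting this, $\iota$ extends continuously and $W$-equivariantly to $F \colon \partial_G W \ar \partial \mathcal{E}$; equivariance together with the very definition of $\Lambda(W)$ as the accumulation set of the orbit then forces the image to be exactly $\Lambda(W)$, yielding the desired surjection.

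The main obstacle is verifying Mitra's divergence estimate, namely that $\iota$ does not fold word geodesics back into a bounded region of $\mathcal{E}$. The orbit map is in general not a quasi-isometric embedding --- this is exactly why $F$ is only a surjection and need not be injective --- so one cannot merely transport boundaries along a quasi-isometry. Instead I would translate word length into wall-crossing data: a reduced word for $w$ corresponds to a minimal gallery whose length equals the number of walls separating the base chamber from its $w$-image, and I would combine the uniform thinness of geodesic triangles in $\mathrm{Cay}(W,S)$ with the geometry of the reflecting hyperplanes in $\mathcal{E}$ to bound $d_{\HH}(x_0, w x_0)$ from below along subsegments. Controlling this lower bound uniformly --- so that escaping every bounded set in the word metric forces escaping every bounded set of $(\mathcal{E}, d_{\HH})$ in the required geodesic-by-geodesic manner --- is the technical heart of the argument.
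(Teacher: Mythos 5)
Your proposal is not complete: it defers exactly the step that constitutes the entire difficulty of the theorem. Split the situation according to the paper's trichotomy (Proposition \ref{bunrui}). In the cocompact and convex cocompact cases (i) and (iii) the fundamental region $K$ (resp.\ $K'$) is bounded, so the orbit map is a genuine quasi-isometry (Corollary \ref{qi}); the boundary extension is then a homeomorphism by the standard quasi-isometry extension together with $\partial_G \simeq \partial_I \simeq \partial D$, and Mitra's criterion is unnecessary. The only case where your divergence function $M(R)$ is nontrivial is case (ii), where some pair $\alpha,\beta \in \Delta$ has $B(\alpha,\beta)=-1$ (the theorem's hypothesis on rank $n' \ge 3$ subgroups forces all cusps to have rank $2$). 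There the orbit map is exponentially distorted along cosets of the infinite dihedral subgroup $\langle s_\alpha, s_\beta\rangle$: one has $d_{\HH}(x_0,(s_\alpha s_\beta)^k x_0)\asymp \log k$, since the orbit climbs a horoball-like region at the point of tangency of $\partial D$ with $\conv(\{\widehat{\alpha},\widehat{\beta}\})$. Coarse Lipschitzness plus properness of $\iota$ give no uniform geodesic-by-geodesic lower bound in this regime, and your wall-crossing sketch does not engage with it: a word geodesic lying deep inside a coset $g\langle s_\alpha,s_\beta\rangle$ crosses many walls whose images in the ellipsoid accumulate at the cusp point and contribute only logarithmically to Hilbert distance, so ``number of separating walls'' does not translate into the needed lower bound on $d_{\HH}(x_0, w x_0)$ along subsegments without a genuinely new estimate. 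You name this ``the technical heart'' and stop; but that heart is the theorem, and it is precisely where the resulting map fails to be injective (the two points of $\partial_G W$ determined by $\{(s_\alpha s_\beta)^k\}_k$ and $\{(s_\beta s_\alpha)^k\}_k$ must be identified), so no soft argument can suffice.

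For contrast, the paper never invokes Mitra's criterion. It deforms the bilinear form, setting $B_m = B - A_m$ where $A_m(\alpha,\beta)=1/m$ exactly at the entries with $B(\alpha,\beta)=-1$, so that each $\rho_m$ is a convex cocompact action of the same group $W$; the easy case then gives a $W$-equivariant homeomorphism $F_1 : \partial_G(W,S) \ar \Lambda_{B_1}(W)$, and the Cannon-Thurston map is built as $F' = \iota \circ F_1$, where the continuity, well-definedness and surjectivity of $\iota : \Lambda_{B_1}(W)\ar \Lambda(W)$ follow from explicit quantitative comparisons of the two orbits: the linear lower bound $|w(o)|_1 \ge C_1|w|$ (Lemma \ref{kuwashiku}) together with the inductively proved inequality $-B(w(o),\rho_m(w)(o))\le -q(o)$, yielding $q(w\cdot o - \rho_m(w)\cdot o) \ar 0$ uniformly as $|w|, m \ar \infty$. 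If you wish to salvage the Mitra route you must actually prove the divergence estimate near the cusps, presumably by a horoball analysis equivalent in substance to these inequalities. Finally, a smaller inaccuracy: Moussong's criterion requires excluding not only affine special subgroups of rank $\ge 3$ but also pairs of disjoint commuting infinite special subgroups; the latter is also ruled out by the signature hypothesis (such a product would carry a form with two nonpositive directions, incompatible with positive definiteness or signature $(n'-1,1)$), but your argument should say so rather than passing directly from ``no affine rank $\ge 3$'' to ``no $\Z^2$''.
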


More of this we can easily see that the limit set of an arbitrary Coxeter subsystem $(W',S')$ 
of $(W,S)$ such that $S'\subset S$ is a section of the limit set of $W$ by a hyperplane.
Hence the limit set of $W'$ is identically included in the limit set of $W$.

The reason can be found in Section 4 
why the assumption of our theorem is not only for the signature of $B$
but also for its principal submatrices.
For the most general case we need more complicated arguments.
We will discuss excepted cases in the forthcoming paper.

Under our assumption for the bi-linear forms,
we will see that many of Coxeter groups of type $(n-1,1)$ are Gromov hyperbolic.
For the Gromov hyperbolic groups acting co-compactly,
the existence of the Cannon-Thurston maps between their Gromov boundaries
immediately follows from the well known fact that 
such groups are quasi isometric to their phase spaces.
However since our situation is slightly different,
it is non-trivial the existence of Cannon-Thurston maps.

In \cite{dhr} and \cite{hmn} it has proved a conjecture
proposed in \cite[Conjecture 3.9]{hlr} for the case where the associating 
bilinear form has the signature $(n-1,1)$.  
That states the distribution
of accumulation points of roots of infinite Coxeter groups can be described as some
appropriate set of points.
It is natural to compare the limit set and the set of accumulation points of roots.
As a consequence, we also prove the same the result in \cite[Theorem 1.1]{hpr}.
We remark that this has been done independently.

\begin{thm}\label{main2}
	Let $W$ be a Coxeter group of rank $n$ whose associating bi-linear form $B$ has
	signature $(n-1,1)$.
	Then the limit set $\Lambda(W)$ of $W$ coincides with 
	the set of accumulation points of roots $E(W)$ of $W$.
\end{thm}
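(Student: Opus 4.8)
The plan is to work throughout in the projective (Klein) model furnished by the isometric action on the ellipsoid constructed above. Writing $Q=\{v:B(v,v)=0\}$ for the isotropic cone, the projectivization of the negative cone $\{v:B(v,v)<0\}$ is the open ellipsoid $\mathcal{E}$ carrying the Hilbert metric $d_{\HH}$, and its boundary $\partial\mathcal{E}$ is the image of $Q\setminus\{0\}$. Both sets to be compared live on this common boundary: since $W$ acts on $(\mathcal{E},d_{\HH})$ by isometries, the limit set $\Lambda(W)$ of an orbit $Wx_0$ sits in $\partial\mathcal{E}$, while by the results of \cite{hlr} refined in \cite{dhr,hmn} the normalized roots accumulate only on $Q$, so that $E(W)\subset\partial\mathcal{E}$ after identifying the affine normalizing chart with $\partial\mathcal{E}$. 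I would then establish the equality by two inclusions, the central tool being the $B$-polarity that sends a root $\beta$ (a spacelike direction, $B(\beta,\beta)=1$, lying outside $\mathcal{E}$) to its wall $H_\beta=\beta^\perp\cap\mathcal{E}$, a totally geodesic hyperplane fixed by the reflection $s_\beta$; note that $B$-invariance of $W$ gives $H_{w\beta}=wH_\beta$.

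For $E(W)\subseteq\Lambda(W)$, take roots $\beta_k=w_k\alpha_{i_k}$ whose normalizations $\widehat{\beta_k}$ converge to some $x\in\partial\mathcal{E}$. Since $B(\widehat{\beta_k},\widehat{\beta_k})=B(\beta_k,\beta_k)/c_k^2=1/c_k^2\to B(x,x)=0$, the normalizing factor $c_k$ (the coordinate sum in the simple-root basis) blows up, which forces $\ell(w_k)\to\infty$. By projective duality the polar hyperplane of $[\beta_k]$ tends to the tangent plane of $\partial\mathcal{E}$ at $x$, so the wall $H_{\beta_k}=w_kH_{\alpha_{i_k}}$ recedes to tangency at $x$. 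Because the basepoint $x_0$ lies within a uniform distance $C=\max_i d_{\HH}(x_0,H_{\alpha_i})$ of every simple wall, the isometry $w_k$ gives $d_{\HH}(w_kx_0,H_{\beta_k})\le C$; a bounded $d_{\HH}$-neighbourhood of a wall degenerating to a boundary point itself shrinks to that point, so $w_kx_0\to x$ and hence $x\in\Lambda(W)$.

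For the reverse inclusion $\Lambda(W)\subseteq E(W)$, take an orbit with $w_kx_0\to x$, so again $\ell(w_k)\to\infty$. Viewing the normalized root projectively, $\widehat{w_k\alpha_i}$ is just $w_k[\alpha_i]$, so it suffices to understand how $w_k$ moves the fixed compact configuration $K$ containing $[x_0]\in\mathcal{E}$ and the finitely many spacelike points $[\alpha_i]$. By the convergence dynamics of a discrete group of isometries of $\h^{n-1}$, after passing to a subsequence the $w_k$ act in north--south fashion with $w_k^{-1}x_0\to y$, and then $w_k$ maps every compact set avoiding the repelling direction $[y]$ into a neighbourhood of $x$ shrinking to $\{x\}$; choosing $\alpha=\alpha_i$ with $[\alpha]\neq[y]$ yields $\widehat{w_k\alpha}\to x$, whence $x\in E(W)$.

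The crux, and the step I expect to be the main obstacle, is the quantitative reconciliation of the two normalizations: the Hohlweg--Labb\'e--Ripoll affine normalization of roots (dividing by the coordinate sum) against the projective/visual normalization of the orbit in $\partial\mathcal{E}$, together with the bookkeeping of the repelling direction above. Concretely I would prove a uniform comparison $\max_i d_{\mathbb P}\!\left(\widehat{w\alpha_i},\,wx_0\right)\to0$ as $\ell(w)\to\infty$, where $d_{\mathbb P}$ is a projective metric on the chart; this asserts that roots and orbit points agree asymptotically and delivers both inclusions at once. Establishing it requires controlling the image $w_kK$ via the contraction estimates for $W$-isometries acting near $Q$, using that $B(w_k\alpha,w_k\alpha)=1$ pins the image to a definite projective scale. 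Once this comparison is secured, the remainder is routine manipulation of the polarity map and the standard description of limit sets of discrete isometry groups of $\h^{n-1}$.
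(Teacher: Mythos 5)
Your first inclusion, $E(W)\subseteq\Lambda(W)$ via the polarity $\beta\mapsto H_\beta$ and the uniform bound $\max_i d_{\HH}(x_0,H_{\alpha_i})\le C$, is sound (each $\alpha_i^{\perp}$ has signature $(n-2,1)$, so every simple wall meets $D$, and a bounded Hilbert neighborhood of a wall degenerating to a tangency point does shrink to that point); this is essentially the route of \cite{hpr} rather than the paper's. But your second inclusion has a genuine gap. The convergence ("north--south") dynamics you invoke are valid on the \emph{closed ball}: $w_k\to x$ uniformly on compact subsets of $\overline{D}\setminus\{y\}$. You apply them to the points $[\alpha_i]$, which are spacelike and lie outside the closed ball, and there the exceptional set of the projective dynamics of a divergent sequence in $O(n-1,1)$ is not the point $[y]$ but the entire polar hyperplane $\mathbb{P}(y^{\perp})$, tangent to $\partial D$ at $y$. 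Concretely, in a basis $(e_+,e_2,\dots,e_{n-1},e_-)$ with $e_\pm$ isotropic, the diagonal sequence $a_k=\mathrm{diag}(e^{tk},1,\dots,1,e^{-tk})$ satisfies $a_k[v]\to[e_+]$ exactly when $B(v,e_-)\neq 0$; the spacelike point $[e_2]$ is distinct from $[e_-]$ yet is fixed by every $a_k$ and never converges to $[e_+]$. So "choose $\alpha_i$ with $[\alpha_i]\neq[y]$" is the wrong condition and the step fails as written. It is repairable: since $\Delta$ is a basis of $V$ and $B$ is nondegenerate, $B(\alpha_i,y)=0$ for all $i$ would force $y=0$, so some simple root satisfies $B(\alpha_i,y)\neq 0$ and that one must be chosen; but this hypothesis has to be stated and used.

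Separately, you explicitly leave your self-identified "crux" --- the uniform comparison $\max_i d_{\mathbb{P}}(\widehat{w\alpha_i},w\cdot o)\to 0$ --- unproven, and you anticipate contraction estimates will be needed. In fact this is exactly where the paper's proof (Proposition \ref{icchi}) is short and purely algebraic: by $W$-invariance of the form, $B(w_n\cdot\widehat{\delta},\,w_n\cdot o)=B(\delta,o)\big/\big(|w_n(\widehat{\delta})|_1\,|w_n(o)|_1\big)$, and both normalizing factors diverge --- $|w_n(\delta)|_1\to\infty$ for any injective sequence by \cite[Theorem 2.7]{hlr}, and $|w_n(o)|_1\to\infty$ because properness of the normalized action forces $q(w_n\cdot o)\to 0$ --- so the pairing tends to $0$; since two isotropic vectors pair to zero under a Lorentzian form only if they are proportional, and both sequences are normalized in $V_1$, the normalized roots and the orbit of $o$ accumulate at the same boundary points, with Lemma \ref{limitdef} supplying independence of the base point. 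This single computation delivers both inclusions simultaneously, with no need for wall polarity or convergence dynamics. Until you either carry out such a comparison or repair the dynamics step with the polar-hyperplane condition $B(\alpha_i,y)\neq 0$, your proposal does not yet establish $\Lambda(W)\subseteq E(W)$.
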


A brief outline of the paper follows.
In Section 2, we recall some terminologies in the theory of Coxeter groups and 
define our action.
In Section 3, we define Hilbert metrics on ellipsoids on which the
corresponding Coxeter groups act properly and 
correct some basic properties of the Hilbert metric spaces
without the proofs.
The properness of our action is proved in Section 4.
We also give the proof of Theorem \ref{main2} there.
In Section 5,
we define Gromov and CAT(0) boundaries of metric spaces.
In Section 6,
we show our main result (Theorem \ref{main}).

\subsection*{Acknowledgements} 
The author would like to be grateful to Prof. Hideki Miyachi 
for his helpful comments and suggestion to work on Theorem \ref{main}. 
The author would also thank to Prof. Yohei Komori for insightful comments and 
giving him an example.


\section{The Coxeter systems and geometric representation}

\subsection{The Coxeter systems}
A {\em Coxeter group} $W$ of rank $n$ is generated by the set $S=\{s_1, \ldots, s_n\}$ 
with the relations $(s_is_j)^{m_{ij}}=1$, 
where $m_{ij} \in \Z_{>1} \cup \{\infty\}$ for $1 \leq i < j \leq n$ 
and $m_{ii}=1$ for $1 \leq i \leq n$. 
More precisely, we say that the pair $(W,S)$ is a {\em Coxeter system}. 

For a Coxeter system $(W,S)$ of rank $n$, 
let $V$ be a real vector space with its orthonormal basis $\Delta=\{\alpha_s \vert s \in S\}$
with respect to the Euclidean inner product. 
Note that by identifying $V$ with $\R^n$, we treat $V$ as a Euclidean space. 
We define a symmetric bilinear form on $V$ by setting 
\begin{align*}
B(\alpha_i, \alpha_j) \; 
	\begin{cases}
	\ =-\cos\left(\frac{\pi}{m_{ij}}\right) \;\;\;&\text{if } m_{ij}< \infty, \\
	\ \leq -1 &\text{if }m_{ij}=\infty 
	\end{cases}
\end{align*}
for $1 \leq i \leq j \leq n$, where $\alpha_{s_i}=\alpha_i$, 
and call the associated matrix $B$ the {\em Gram matrix}. 
Classically, $B(\alpha_i, \alpha_j)=-1$ if $m_{ij}=\infty$, 
but throughout this thesis, we allow its value to be any real number less than or equal to $-1$. 
This definition derives from \cite{hlr}. 
Given $\alpha \in V$ such that $B(\alpha, \alpha) \not=0$, $s_\alpha$ denotes 
the map $s_\alpha : V \to V$ by
\[
	s_\alpha(v)=v- 2 \frac{B(\alpha,v)}{B(\alpha, \alpha)}\alpha \;\;\;\text{for any } v \in V,
\] 
which is said to be a {\em $B$-reflection}. 
Then $\Delta$ is called a {\em simple system} and its elements are {\em simple roots} of $W$.
The Coxeter group $W$ acts on $V$ associated with its generating set $S$ 
as compositions of $B$-reflections $\{s_{\alpha}\ \vert\ \alpha \in \Delta\}$ 
generated by simple roots.
The {\em root system} $\Phi$ of $W$ is defined to be the orbit of $\Delta$ 
under the action of $W$ and its elements are called its {\em roots}. 
Let 
\begin{align*}
	V^+ := 
		\left\{\ v \in V\ \left\vert\ v=\sum_{i=1}^n v_i\alpha_i, v_i > 0\right.\right\},\ 
	V^- := 
		\left\{\ v \in V\ \left\vert\ v=\sum_{i=1}^n v_i\alpha_i, v_i < 0\right.\right\}. 
\end{align*}

\begin{assu}\label{assume}
	In this paper, we always assume the following.
	\begin{itemize}
	\item The bilinear form $B$ has the signature $(n-1,1)$.
	We call such a group a Coxeter group of type $(n-1,1)$.
	\item The Gram matrix$B$ is not block-diagonal 
	up to permutation of the basis.
	In that case, the matrix $B$ is said to be {\it irreducible}.
	\end{itemize}
\end{assu}

Recall that a matrix $A$ is {\it non-negative} 
if each entry of $A$ is non-negative.

\begin{lem}\label{heimen}\label{korekore}
	Let $o$ be an eigenvector for the negative eigenvalue of $B$.
	Then all coordinates of $o$ have the same sign.
\end{lem}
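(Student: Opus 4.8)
The plan is to recognize this as a Perron--Frobenius statement in disguise. First I would record the sign structure of $B$ coming from the definitions: the diagonal entries are $B(\alpha_i,\alpha_i)=-\cos(\pi/m_{ii})=-\cos\pi=1$, while for $i\neq j$ either $m_{ij}<\infty$ (so $\pi/m_{ij}\in(0,\pi/2]$ and $B(\alpha_i,\alpha_j)=-\cos(\pi/m_{ij})\le 0$) or $m_{ij}=\infty$ (so $B(\alpha_i,\alpha_j)\le -1<0$). Thus $B$ is a symmetric matrix with diagonal $1$ and all off-diagonal entries $\le 0$. Moreover, since the signature is $(n-1,1)$, the form is nondegenerate with exactly $n-1$ positive eigenvalues and a single negative eigenvalue $\lambda_0<0$; in particular $\lambda_0$ is the unique smallest eigenvalue of $B$ and is simple, so its eigenspace is one-dimensional and $o$ is determined up to scaling.

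Next I would pass to a genuinely non-negative matrix by shifting. Choose a real number $c$ large enough and set $C:=cI-B$. Its diagonal entries are $c-1$ and its off-diagonal entries are $-B(\alpha_i,\alpha_j)\ge 0$; for $c$ sufficiently large every entry of $C$ is non-negative. The key observation is that the zero/non-zero pattern of the off-diagonal part of $C$ is identical to that of $B$, so the hypothesis in Assumption~\ref{assume} that $B$ is not block-diagonal up to permutation of the basis says exactly that $C$ is irreducible in the Perron--Frobenius sense (its associated graph is connected). I would then invoke the Perron--Frobenius theorem for irreducible non-negative matrices: the spectral radius of $C$ is a simple eigenvalue, and up to a positive scalar it is the \emph{only} eigenvalue admitting an eigenvector all of whose coordinates are strictly positive.

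Finally I would translate this back to $B$. The eigenvalues of $C$ are exactly $c-\lambda$ as $\lambda$ ranges over the eigenvalues of $B$, and the largest eigenvalue of $C$ (its spectral radius, since $C\ge 0$) corresponds to the smallest eigenvalue of $B$, namely $\lambda_0$. Hence the Perron eigenvector of $C$ is precisely an eigenvector of $B$ for the negative eigenvalue $\lambda_0$, and it may be chosen with all coordinates strictly positive. Because the $\lambda_0$-eigenspace is one-dimensional, the given vector $o$ is a nonzero scalar multiple of this positive vector, so all coordinates of $o$ share the same sign, which is the claim.

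The one point requiring care -- the ``hard part'' in an otherwise routine argument -- is verifying that the notion of irreducibility stated in Assumption~\ref{assume} (not block-diagonalizable by a simultaneous permutation of the basis) coincides with Perron--Frobenius irreducibility of $C$, and confirming that the Perron root of $C$ lands on the negative eigenvalue of $B$ rather than some positive one. Both follow from the symmetry of $B$ and the fact that shifting by $cI$ reverses the order of the spectrum while fixing eigenvectors, so no serious computation is needed.
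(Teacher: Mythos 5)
Your proposal is correct and follows essentially the same route as the paper: both shift $B$ to a non-negative irreducible matrix (the paper takes $I-B$, you take $cI-B$ for large $c$, an immaterial difference since the off-diagonal entries of $B$ are already $\le 0$ and the diagonal is $1$), apply Perron--Frobenius to get a positive eigenvector for the spectral radius, and note that this eigenvalue corresponds to the unique negative (hence simple) eigenvalue of $B$. Your extra care about matching the two notions of irreducibility and about the Perron root landing on the negative eigenvalue fills in details the paper leaves implicit, but the argument is the same.
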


\begin{proof}
	This follows from Perron-Frobenius theorem for irreducible 
	non-negative matrices.
	Let $I$ be the identity matrix of rank $n$.
	Then $-B+I$ is irreducible and non-negative.
	Note that since $-B+I$ and $B$ are symmetric, all eigenvalues are real.
	By Perron-Frobenius theorem, we have a positive eigenvalue $\lambda'$ of $-B+I$
	such that $\lambda'$ is the maximum of eigenvalues of $-B+I$ and 
	each entry of corresponding eigenvector $u$ is positive.
	On the other hand, for each eigenvalue $a$ of $B$ there exists an eigenvalue $b$ of $-B+I$
	such that $a = 1-b$.
	Let $\lambda$ be the negative eigenvalue of $B$.
	Then an easy calculation gives $\lambda = 1-\lambda'$.
	Therefore $\R u = \R o$.
\end{proof}

We fix $o \in V$ to be the eigenvector corresponding to the negative eigenvalue of $B$ 
whose euclidean norm equals to $1$ and all coordinates are positive.
Hence if we write $o$ in a linear combination $o = \sum_{i=1}^n o_i \alpha_i$ of $\Delta$
then $o_i > 0$.
Given $v \in V$, we define $|v|_1$ by $\sum_{i=1}^no_iv_i$ if $v = \sum_{i=1}^n v_i \alpha_i$. 
Note that a function $| \cdot |_1 : V \to \R$ is actually a norm in the set of vectors 
having nonnegative coefficients. 
It is obvious that $|v|_1>0$ for $v \in V^+$ and $|v|_1<0$ for $v \in V^-$. 
Let $V_i=\{ v \in V\ \vert\ |v|_1=i\}$, where $i=0,1$. 
For $v \in V \setminus V_0$, 
we write $\widehat{v}$ for the ``normalized'' vector $\frac{v}{|v|_1} \in V_1$. 
We also call $o$ the normalized eigenvector (corresponding to the negative eigenvalue of $B$).
Also for a set $A \subset V \setminus V_0$, 
we write $\widehat{A}$ for the set of all $\widehat{a}$ with $a \in A$.
We notice that $B(x,\alpha) = |\alpha|_1B(x,\widehat{\alpha})$ hence 
the sign of $B(x,\alpha)$ equals to the sign of $B(x,\alpha)$ for any 
$x \in V$ and $\alpha \in \Delta$.

\begin{rem}\label{daijoubu}
All roots are contained in $V^+ \cup V^-$ and hence 
$\Phi \cap V_0 = \emptyset$.
\end{rem}

Then by Remark\ref{daijoubu}, the set $\widehat{\Phi}$ is well-defined.
Let $E$ be the set of accumulation points of $\widehat{\Phi}$ 
with respect to the Euclidean topology.

It turns out that we only need to work on the case where $B$ is irreducible.
If the matrix $B$ is reducible, then we can divide $\Delta$
into $l$ subsets $\Delta = \sqcup_{i=1}^l\Delta_i $
so that each corresponding matrix $B_{i} = \{B(\alpha,\beta)\}_{\alpha,\beta \in \Delta_i}$ 
is irreducible and $B$ is block diagonal $B = (B_1,\ldots,B_l)$.
Then for any distinct $i,j$, if $\alpha \in \Delta_i$ and $\beta \in \Delta_j$,
$s_\alpha$ and $s_\beta$ commute.
In this case we see that $W$ is direct product
\[
	W = W_1 \times W_2 \times \cdots \times W_l,
\]
where $W_i$ is the Coxeter group corresponding to $\Delta_i$.
From this, the action of $W$ can be regarded as a direct product of the actions of each $W_i$.
Then for the set $E$ of accumulation points of roots of $W$ we see that 
$
	E = \sqcup_{i=1}^l E_i,
$
where $E_i$ is the set of accumulation points of roots $W_i \cdot \Delta_i$ 
(see Proposition 2.14 in \cite{hlr}).
Moreover if $B$ has the signature $(n-1,1)$, there exists a unique $B_k$ 
which has the signature $(n_k-1,1)$ and others are positive definite.
Since if the Gram matrixis positive definite then 
the corresponding Coxeter group $W'$ is finite, and hence 
the limit set $\Lambda(W') = \emptyset$ (for the definition of the limit set, see Section 3.3).
This ensures that $\Lambda(W)$ is distributed on $\conv(\widehat{\Delta_k})$,
where $\conv(\widehat{\Delta_k})$ is the convex hull of $\widehat{\Delta_k}$.
Thus $\Lambda(W) = \Lambda(W_k)$.
Accordingly, 
if there exists the Cannon-Thurston map for $W_k$ then we also have the Cannon-Thurston map
for the whole group $W$.
This follows from the fact that the direct product $G_1 \times G_2$ of 
a finite generated infinite group $G_1$ and a finite group $G_2$
has the same Gromov boundary as that of $G_1$.

We denote $q(v)=B(v,v)$ for $v \in V$. 
Let $Q=\{v \in V\ \vert\  q(v)=0\}$, $Q_- = \{ v \in V\ \vert\ q(v)<0\}$ then we have
\[
	\widehat{Q} = V_1 \cap Q,\ \ \ \widehat{Q_-} = V_1 \cap Q_-.
\]
Since $B$ is of type $(n-1,1)$, $\widehat{Q}$ is an ellipsoid.
The cone $Q_-$ has two components the ``positive side'' $Q_-^+$, 
that is the component including $o$,
and the ``negative side'' $Q_-^- = - Q_-^+$.
Similarly we divide $Q$ into two components $Q^+$ and $Q^-$ so that
$Q^+ = \partial Q_-^+$ and $Q^- = \partial Q_-^-$.

\begin{rem}\label{zerodake} 
We have 
\[
	W(V_0) \cap Q = \{{\bf 0}\},
\] 
where {\bf 0} is the origin of $\R^n$.
To see this we only need to verify that $V_0 \cap Q = \{{\bf 0}\}$ since $Q$ is invariant under
$B$-reflections.
We notice that $V_0 = \{v \in V\ \vert\ B(v,o) = 0\}$.
For $i = 1,\ldots,n-1$, 
let $p_i$ be an eigenvector of $B$ corresponding to a positive eigenvalue $\lambda_i$.
For any $v \in V_0$, we can express $v$ in a linear combination 
$v = \sum_i^{n-1} v_i p_i$ since $B(v,o) = 0$.
Then we have $B(v,v) = \sum_i^{n-1} \lambda_i v_i^2 \|p_i\|^2\ge 0$
where $\|*\|$ denotes the euclidean norm.
Since $\lambda_i > 0$ for $i = 1,\ldots,n-1$, we have $B(v,v) = 0$ if and only if $v={\bf 0}$. 
\end{rem}


\subsection{The word metric}
This paper is devoted to the connection between the geometry of the Coxeter groups themselves
and their acting spaces.
To do this, it needs to regard the groups as metric spaces.

Let $G$ be a finitely generated group.
Fixing a finite generating set $S$ of $G$,
all elements in $G$ can be represented by a product of elements in $S \cup S^{-1}$
where $S^{-1} = \{s^{-1}\ \vert\ s \in S\}$.
We say such a representation to be a {\it word}.
Letting $\langle S \rangle$ be the set of words.
For a word $w \in \langle S \rangle$
we define the {\it word length} $\ell_{S}(w)$ as 
the number of generators $s \in S$ in $w$.
Now, we naturally have a map 
$\iota : \langle S \rangle \ar W$.
For a given $g \in G$, we define the {\it minimal word length} $|g|_S$ of $g$ by
$\min\{\ell_S(w)\ \vert\ w \in \iota^{-1}(g)\}$.
An expression of $g$ realizing $|g|_S$ is called {\it the reduced expression}
or {\it the geodesic word}.
Using the word length, we can define so-called {\it the word metric} with respect to $S$ on $G$, i.e.
for $g,h \in G$, their distance is $|g^{-1}h|_S$.


\section{The Hilbert metric}

\subsection{The cross ratio and the Hilbert metric}
For four vectors $a,b,c,d \in V$ with $c-d, b-a \notin Q$, 
we define the {\it cross\ ratio} $[a,b,c,d]$ with respect to $B$ by 
\[
	[a,b,c,d] := \frac{q(c-a)\cdot q(b-d)}{q(c-d)\cdot q(b-a)}.
\]
We observe how the cross ratio works in a cone.  

\begin{prop}\label{1}
	Let four points $a_1,a_2,a_3,a_4 \in V$ be collinear 
	(namely $a_2,a_3$ are on the segment connecting $a_1$ and $a_4$), 
	and $a_1-a_4 \notin Q$.
	Let $b_1,b_2,b_3,b_4 \in V$ satisfying 
	\begin{itemize}
		\item for each $i$, $b_i$ lies on a ray $R_i$
			  connecting $a_i$ and some point $p \in V$,
		\item four vectors $b_1,b_2,b_3,b_4$ are co-linear and $b_1-b_4 \notin Q$.
	\end{itemize}
	Then we have 
	\[
		[a_1,a_2,a_3,a_4] = [b_1,b_2,b_3,b_4].
	\]
\end{prop}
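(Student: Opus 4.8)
The plan is to reduce the $B$-cross ratio to the classical affine cross ratio and then invoke the projective invariance of the latter under perspectivity. The first observation is that, restricted to a single line, the cross ratio defined via $q$ is merely the square of the ordinary affine cross ratio. Writing the collinear points as $a_i = a_1 + s_i u$ with direction $u = a_4 - a_1$ and scalars $s_1 = 0$, $s_4 = 1$, $s_2,s_3 \in [0,1]$, we have $a_i - a_j = (s_i - s_j)u$, and since $q$ is a quadratic form, $q(a_i - a_j) = (s_i - s_j)^2 q(u)$. The hypothesis $a_1 - a_4 \notin Q$ forces $q(u) \ne 0$, so all four factors are nonzero and the $q(u)$ terms cancel, leaving
\[
	[a_1,a_2,a_3,a_4] = \left(\frac{(s_3 - s_1)(s_2 - s_4)}{(s_3 - s_4)(s_2 - s_1)}\right)^2 =: \rho(s_1,s_2,s_3,s_4)^2,
\]
where $\rho$ is the affine cross ratio. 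The same computation on the line through the $b_i$, using $b_1 - b_4 \notin Q$, gives $[b_1,b_2,b_3,b_4] = \rho(\tau_1,\tau_2,\tau_3,\tau_4)^2$, where $\tau_i$ is the parameter of $b_i$ on its line $L_b$.

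It therefore suffices to show $\rho(s_1,\dots,s_4) = \rho(\tau_1,\dots,\tau_4)$, i.e. that the perspectivity from $p$ sending $a_i$ to $b_i$ preserves the affine cross ratio. First I would note that the configuration is planar: assuming $p$ lies on neither line, $p$ together with the line $L_a$ through the $a_i$ spans a $2$-plane $\Pi$; each ray $R_i$ lies in $\Pi$, hence so does every $b_i$ and the line $L_b$. Working in $\Pi$ with $p$ as origin, collinearity of $p$, $a_i$, $b_i$ means $b_i = \mu_i a_i$ for a scalar $\mu_i$. Setting $z = a_1 - p$, $w = b_1 - p$, $v = b_4 - b_1$, and writing $x \wedge y$ for the determinant $\det(x,y)$ of two vectors in a basis of $\Pi$, the relation $w + \tau v = \mu(z + s u)$ becomes, after taking $\wedge$ with $z + su$ (which annihilates the right-hand side),
\[
	\tau = \frac{-(w \wedge u)\, s - (w \wedge z)}{(v \wedge u)\, s + (v \wedge z)}.
\]
This exhibits $s \mapsto \tau$ as a fractional linear transformation.

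Since fractional linear transformations preserve the affine cross ratio, we obtain $\rho(s_1,\dots,s_4) = \rho(\tau_1,\dots,\tau_4)$, and squaring yields the proposition. The main obstacle is the bookkeeping of the second step: one must confirm that $s \mapsto \tau$ is genuinely fractional linear and nondegenerate, so that the four image parameters stay distinct and the denominators in $\rho$ do not vanish, and one must dispose of the degenerate positions of $p$ relative to the two lines. The decisive simplification is the opening remark that the quadratic-form cross ratio collapses to the square of the affine one; this strips away the bilinear form $B$ entirely and turns the statement into the classical projective invariance of the cross ratio under perspectivity.
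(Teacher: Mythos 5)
Your proof is correct, but it takes a genuinely different route from the paper's. The paper works directly with the $q$-cross ratio inside the plane spanned by $a_1-p$ and $a_4-p$: it draws two auxiliary lines through $b_2$ and $b_3$ parallel to the line through $a_1,a_4$, intersects them with the rays $R_i$, and uses similar triangles, the homothety $B'_i-p=k(B_i-p)$, and the parallelism relation $B_i-B_j=m(a_i-a_j)$ to transport each $q$-factor back to the $a$-line; the quadratic form is carried along throughout. You instead strip out $B$ at the very start: since $q$ is quadratic, along one line $q(a_i-a_j)=(s_i-s_j)^2q(u)$ with $q(u)\neq 0$, so the $B$-cross ratio is the square of the classical affine cross ratio (incidentally, this one-line remark also re-proves the paper's subsequent Observation that $d_D=d_H$), and the proposition reduces to invariance of the affine cross ratio under the perspectivity from $p$, which you verify by exhibiting $s\mapsto\tau$ as a fractional linear map via the wedge computation. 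Your route buys transparency and generality --- it shows the statement is pure projective geometry with no dependence on the signature or even on $B$ --- while the paper's synthetic argument stays coordinate-free and within the quadratic-form formalism it uses elsewhere. The loose ends you flag do close easily: the Möbius map is nondegenerate because $b_1-b_4\notin Q$ forces $b_1\neq b_4$ (as $q(0)=0$), so $\tau_1\neq\tau_4$, the map is non-constant, and hence its determinant $(w\wedge z)(v\wedge u)-(w\wedge u)(v\wedge z)$ is nonzero; the denominators $(v\wedge u)s_i+(v\wedge z)$ are nonzero precisely because each ray $R_i$ actually meets the $b$-line at $b_i$; and the degenerate position $p\in L_a$ (where the rays coincide, the $b_i$ are unconstrained, and the claim can genuinely fail) must indeed be excluded --- but the paper tacitly excludes it too, since its proof presupposes that $a_1-p$ and $a_4-p$ span a two-dimensional subspace.
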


\begin{proof}
	By the assumption, 
	all eight points are located on the two dimensional subspace $P$ 
	which is spanned by $a_1-p$ and $a_4-p$ in $V$.
	
	Let $\ell_0$ be a line in $P$ through $a_1$ and $a_4$.
	Consider two lines $\ell_2$ and $\ell_3$ in $P$ parallel to $\ell_0$ 
	with $b_2 \in \ell_2$ and $b_3 \in \ell_3$.
	Let $B_i \in R_i \cap \ell_2$ and $B'_i \in R_i \cap \ell_3$ for $i = 1, 2, 3, 4$. 
	Then we have  $b_2 = B_2$ and $b_3 = B'_3$, and
	there is a positive constant $k$ such that $B'_i -p = k(B_i - p)$ for $i = 1, 2, 3, 4$.
	Since two triangles with vertices $\{b_4,b_2,B_4\}$ and $\{b_4,b_3,B'_4\}$ are similar,
	$$
		\frac{q(b_2-b_4)}{q(b_3-b_4)} = \frac{q(B_2-B_4)}{q(B'_3-B'_4)}.
	$$
	By the similar reason, we also have
	$$
		\frac{q(b_2-b_1)}{q(b_3-b_1)} = \frac{q(B_2-B_1)}{q(B'_3-B'_1)}.
	$$
	In addition since $\ell_0$ and $\ell_2$ are parallel, there exists 
	a constant $m$ so that
	$$
		B_i - B_j = m(a_i - a_j),
	$$ 
	for all $i,j \in \{1,2,3,4\}$.
	Therefore, we obtain
	\begin{align*}
		[b_1,b_2,b_3,b_4] &= \frac{q(b_3-b_1)q(b_2-b_4)}{q(b_3-b_4)q(b_2-b_1)}
						   = \frac{q(B'_3-B'_1)q(B_2-B_4)}{q(B'_3-B'_4)q(B_2-B_1)}\\
						  &= \frac{q(k(B_3-B_1))q(B_2-B_4)}{q(k(B_3-B_4))q(B_2-B_1)}
						   = \frac{q(B_3-B_1)q(B_2-B_4)}{q(B_3-B_4)q(B_2-B_1)}\\
						  &= \frac{q(a_3-a_1)q(a_2-a_4)}{q(a_3-a_4)q(a_2-a_1)}
						   = [a_1,a_2,a_3,a_4],
	\end{align*}
	which implies what we wanted.
\end{proof}

Using the cross ratio we define a distance $d_D$ on $D$ as follows.
For any $x,y \in D$, take $a,b \in \partial D$ 
so that the points $a,x,y,b$ lie on the segment connecting $a,b$ in this order.
Then $y-b,x-a \notin Q$.
We define
$$
	d_D(x,y) := \frac{1}{2} \log [a,x,y,b],
$$
and call this the {\it Hilbert metric for $B$}.
The definition of the Hilbert metric for $B$ depends heavily on $B$.
However following observation tells us that our definition coincides with 
the ordinary Hilbert metric $d_H$ on $D$.
Recall that the ordinary Hilbert metric $d_H$ on $D$ is defined 
for taking $a,x,y,b$ as above,
$$
	d_H(x,y) = \log \left( \frac{\|y-a\|\ \|x-b\|}{\|y-b\|\ \|x-a\|} \right)
$$
where $\|*\|$ denotes the Euclidean norm.

\begin{obs}
	Take arbitrary $x,y \in Q_-$ and pick two points $a,b \in \partial D$ up
	so that $d_D(x,y) = \frac{1}{2} \log [a,x,y,b]$.
	Then we have $\|y-b\| \le \|x-b\|$,\ $\|x-a\| \le \|y-a\|$. 
	From the collinearity, each pair $\{y-b,x-b\}$ and $\{x-a,y-a\}$ have same direction
	respectively. 
	Hence there exist constants $k,l \ge 1$ such that $x-b = k(y-b)$ and $y-a = l(x-a)$.
	Thus we have
	\begin{align*}
		[a,x,y,b] &= \frac{q(y-a)\ q(x-b)}{q(y-b)\ q(x-a)} 
				   = \frac{l^2q(x-a)\ k^2q(y-b)}{q(y-b)\ q(x-a)} \\
				  &= l^2\cdot k^2
				   = \left(\frac{l\|x-a\|\ k\|y-b\|}{\|y-b\|\ \|x-a\|}\right)^2
				   = \left(\frac{\|y-a\|\ \|x-b\|}{\|y-b\|\ \|x-a\|}\right)^2.
	\end{align*}
	This shows that $d_D(x,y) = d_H(x,y)$ for all $x,y \in D$.
\end{obs}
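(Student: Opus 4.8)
The plan is to reduce the asserted identity to a one-dimensional computation along the common chord, exploiting that $q$ is homogeneous of degree two. First I would parametrize the line through the four points as $p(t)=a+t\,e$ for a fixed direction vector $e$, and let $t_a<t_x<t_y<t_b$ be the parameters of $a,x,y,b$; the strict inequalities encode both the prescribed order on the segment and the fact that $x,y$ lie in the interior $D$. For any two of the four points the difference is a scalar multiple of $e$, so by bilinearity of $B$ one gets $q\bigl(p(t_i)-p(t_j)\bigr)=(t_i-t_j)^2\,q(e)$, which is the only structural input needed.

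Substituting this into the definition $[a,x,y,b]=q(y-a)\,q(x-b)/\bigl(q(y-b)\,q(x-a)\bigr)$, the common factor $q(e)^2$ cancels and one is left with $[a,x,y,b]=\bigl((t_y-t_a)(t_x-t_b)/((t_y-t_b)(t_x-t_a))\bigr)^2$, which is exactly the square of the ordinary affine cross ratio of the four collinear points and is in particular \emph{independent of $B$}. Since along the line Euclidean distance is $\|p(t_i)-p(t_j)\|=|t_i-t_j|\,\|e\|$, this squared quantity equals $\bigl(\|y-a\|\,\|x-b\|/(\|y-b\|\,\|x-a\|)\bigr)^2$; taking $\tfrac12\log$ of both sides then gives $d_D(x,y)=\tfrac12\log[a,x,y,b]=d_H(x,y)$. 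In the more hands-on version of this computation the scalars $k=\|x-b\|/\|y-b\|$ and $l=\|y-a\|/\|x-a\|$ are just the ratios $(t_x-t_b)/(t_y-t_b)$ and $(t_y-t_a)/(t_x-t_a)$, both at least $1$ by the ordering, and $q(x-b)=k^2q(y-b)$, $q(y-a)=l^2q(x-a)$.

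The computation itself is elementary, so there is no genuine analytic obstacle; the two points that require care are geometric. First, I must use the ordering $a,x,y,b$ together with $x,y\in D$ to guarantee that $a,b$ are two \emph{distinct} points where the line meets $\partial D$, so that the chord direction $e$ is transverse to the null cone and $q(e)\ne 0$; otherwise the factors $q(y-b)$ and $q(x-a)$ in the denominator could vanish. Concretely, $q\circ p$ is a quadratic in $t$ with leading coefficient $q(e)$ that vanishes at $t_a,t_b$ (as $a,b\in Q$) and is negative at the interior parameters $t_x,t_y$ (as $x,y\in Q_-$), hence opens upward and forces $q(e)>0$. Second, the same ordering is what makes the cancellation yield $l^2k^2$ with the correct positive square root, so that no sign ambiguity enters when passing to the logarithm.
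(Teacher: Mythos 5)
Your proposal is correct and takes essentially the same route as the paper: both arguments exploit that along the chord all four difference vectors are parallel, so the degree-two homogeneity of $q$ turns the $B$-cross ratio into the square of the Euclidean ratio (your scalars $k=\|x-b\|/\|y-b\|$ and $l=\|y-a\|/\|x-a\|$ are exactly the paper's). The one thing you add beyond the paper is the explicit verification that $q(e)>0$ via the quadratic $t\mapsto q(a+te)$ vanishing at $t_a,t_b$ and being negative in between --- a transversality point the paper leaves implicit when it asserts $y-b,\ x-a\notin Q$, and a worthwhile clarification.
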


By this observation, we can call the Hilbert metric for $B$ merely the Hilbert metric.
An advantage of our definition of the Hilbert metric for $B$ will appear in the proof of
Proposition \ref{isom}.
Note that since $d_D = d_H$, the Hilbert metric is actually a metric.


\subsection{Some properties of the Hilbert metric} 
In this section we correct known geometric properties of a space with the Hilbert metric.

Let $(X,d)$ be a metric space.
We define {\it the length $len(\gamma)$} of an arc $\gamma:[0,t] \ar (X,d)$ by
$$
	len(\gamma) = \sup_{C} \sum_{i=1}^k d(\gamma(t_{i-1}),\gamma(t_i)),
$$
where the infimum is taken over all chains $C = \{ 0 = t_0,t_1,\ldots,t_n = t \}$ on 
$[0,t]$ with unbounded $k$.
A metric space is a {\it geodesic space}
if for any two points there exists at least one arc connecting them whose length 
equals to their distance.
Such an arc is called a {\it geodesic}.
More generally an arc $\gamma$ is {\it quasi geodesic} connecting $x,y \in X$ if there exist 
constants $a \ge 1,b>0$ so that
$$
	a^{-1} d(x,y) -b \le len(\gamma) \le a d(x,y) +b.
$$ 
  
\begin{prop}
	$(D,d_D)$ is
	\begin{itemize}
	\item[(i)]  a proper (i.e. any closed ball is compact) complete metric space and,
	\item[(ii)] a uniquely geodesic space.
	\end{itemize}
\end{prop}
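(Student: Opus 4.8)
The plan is to establish each of the two properties by exploiting the identification $d_D = d_H$ proved in the Observation above, since this lets me transfer all standard facts about the ordinary Hilbert metric to our setting. The ordinary Hilbert metric on a bounded open convex domain is classically known to be proper, complete, and uniquely geodesic, but I will sketch the arguments rather than merely cite them, because I want to make the role of strict convexity of the ellipsoid $\widehat{Q}$ explicit.

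First I would verify that the topology induced by $d_D$ agrees with the Euclidean subspace topology on $D$. The key estimate is that the cross-ratio formula forces $d_D(x,y) \to \infty$ precisely as one of the points approaches $\partial D$ along the chord, while on any Euclidean-compact subset of $D$ (bounded away from the boundary) the two metrics are bi-Lipschitz. Concretely, for $x,y$ ranging over a set whose Euclidean closure lies in the open domain $D$, the endpoints $a,b$ stay a definite distance from $x,y$, so the logarithm in $d_H$ is comparable to $\|x-y\|$. From this comparison, a closed $d_D$-ball is contained in a Euclidean-compact subset of $D$, hence is Euclidean-compact, and since the two topologies coincide there, it is $d_D$-compact. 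This gives properness (i). Completeness then follows formally: a Cauchy sequence for $d_D$ is eventually contained in a closed ball, which is compact, so it has a convergent subsequence and therefore converges.

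For uniqueness of geodesics (ii), the plan is to show first that Euclidean straight segments are geodesics for $d_D$, and then that they are the \emph{only} ones, using strict convexity of $\widehat{Q}$. That straight chords are geodesics is essentially the additivity of $\tfrac12\log$ of the cross ratio along collinear points: if $x,y,z$ lie in this order on a chord with endpoints $a,b$, the multiplicativity of the cross ratio gives $[a,x,z,b]=[a,x,y,b]\,[a,y,z,b]$, whence $d_D(x,z)=d_D(x,y)+d_D(y,z)$, so the chord realizes the distance and is a geodesic. For uniqueness I would argue that any geodesic must be a straight segment: the ellipsoid $\widehat{Q}$ is \emph{strictly} convex, meaning its boundary $\partial D$ contains no line segment, and it is a standard fact that a Hilbert geometry is uniquely geodesic exactly when the boundary contains no open segment. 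The mechanism is that the triangle inequality $d_D(x,z)\le d_D(x,y)+d_D(y,z)$ becomes an equality only when $y$ lies on the chord $\overline{xz}$; any other candidate path can be shortened, because the cross-ratio defect is controlled by how far the boundary bulges away from flat, and strict convexity makes this defect strictly positive off the chord.

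The main obstacle I anticipate is the uniqueness half of (ii): properness and completeness reduce cleanly to the metric comparison, but pinning down that \emph{no} geodesic other than the straight chord exists requires genuinely using that $\partial D$ is a strictly convex quadric rather than a general convex body. I would isolate this as the crux, invoking strict convexity of the ellipsoid $\widehat{Q}$ (which holds precisely because $B$ has signature $(n-1,1)$, so the quadric $q=0$ carries no affine line inside $V_1$) to rule out the degenerate case where the boundary contains a segment and geodesics could branch. Everything else is routine once the identification $d_D=d_H$ and the additivity of the cross ratio are in hand.
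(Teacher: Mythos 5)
Your proposal is correct and follows essentially the same route as the paper: comparing $d_D$ with the Euclidean metric to get properness of closed balls (hence completeness formally), proving the straightness property via multiplicativity of the cross ratio so that chords are geodesics, and invoking strict convexity of the ellipsoid for uniqueness. If anything, you are slightly more careful than the paper at the one delicate point --- namely that $d_D$ blows up near $\partial D$, so closed $d_D$-balls stay in a Euclidean-compact subset of $D$ --- which the paper's proof passes over quickly.
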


\begin{proof}
	(i)\quad We denote $d_E$ be the Euclidean metric on $D$.
		Then the identity map $\id : (D,d_E)\ar (D,d)$ is continuous.
		In fact, 
		fix a point $x$ in $D$ and consider a sequence $\{y_i\}_i$ in $D$ converging to $x$.
		For each $i \in \N$, take
		$a_i,b_i \in \partial D$ so that four points $a_i,x,y_i,b_i$ are collinear.
		Then since $y_i \ar x$ ($i \ar \infty$), we have
		\[
			\frac{\|y_i-a_i\|\ \|x-b_i\|}{\|y_i-b_i\|\ \|x-a_i\|} \ar 1.
		\]
		This shows that $d(x,y_i) \ar 0$, hence $\id$ is continuous.
		Furthermore any closed ball in $(D,d)$ is an image of a compact set in $(D,d_E)$.
		In fact it is bounded closed set in $(D,d_E)$ since $D$ is bounded with respect to 
		the Euclidean metric and the identity map is continuous.
		Therefore any closed ball in $(D,d)$ is compact.
					
		By the properness of $(D,d)$,
		any Cauchy sequence $\{x_m\}_m$ in $(D,d)$
		has at least one converging subsequence in $D$
		since the Cauchy sequences are bounded.
		This implies that $\{x_m\}_m$ itself converges in $D$.
		
	(ii)\quad We can see that the Hilbert metric is a geodesic space by
		the following so-called {\it straightness property}.
		For any $x,y \in V$, $[x,y]$ denotes the Euclidean segment connecting $x$ and $y$. 
		
		If three points $x,y,z \in D$ are on the segment $[a,b]\ (a,b \in \partial D)$
		in this order, then $d(x,z) = d(x,y) + d(y,z)$.
		In fact, we have
		\begin{align*}
			d(x,y) + d(y,z) 
					   &= \frac{1}{2} \left( \log[a,x,y,b] + \log[a,y,z,b] \right)\\
					   &= \frac{1}{2} \log \left( 
					   	\frac{q(y-a)\ q(x-b)}{q(y-b)\ q(x-a)} \cdot 
						\frac{q(z-a)\ q(y-b)}{q(z-b)\ q(y-a)} \right)\\
					   &= \frac{1}{2} \log \left( 
					    \frac{q(z-a)\ q(x-b)}{q(z-b)\ q(x-a)} \right)\\
					   &= d(x,z).
		\end{align*}
		Thus the length of the segment $[x,z]$ realizes the metric $d(x,z)$.
		Furthermore since $D$ is strictly convex, geodesics are unique. 
\end{proof}

Let $(X,d)$ be a geodesic space.
For $x,y,p \in X$, we define the {\it Gromov product} $(x\vert y)_p$ 
of $x$ and $y$ with respect to $p$ by the equality
\[
	(x \vert y)_p = \frac{1}{2}\left(d(x,p) + d(y,p) - d(x,y)\right).
\]
Using this, the hyperbolicity in the sense of Gromov is defined as follows. 
For $\delta \ge 0$
the space $X$ is {\it $\delta$-hyperbolic} if 
\[
	(x \vert z)_p \ge \min\{ (x \vert y)_p, (y \vert z)_p \} -\delta 
\]
for all $x,y,z,p \in X$.
We say the space is simply {\it Gromov hyperbolic} if $X$ is $\delta$-hyperbolic for some
$\delta \ge 0$.

A {\it geodesic triangle} $T \subset X$ with vertices $x,y,z \in X$ is a union
of three geodesic curves with end points $x,y,z$.
We call these curves the {\it sides} of T.
A {\it triangle map} is a map $f : T \ar \R^2$ from geodesic triangle onto an Euclidean
triangle whose sides have the same length as corresponding sides of $T$,
and such that the restriction of $f$ to any one side is an isometry.
We always have triangle maps and they are unique up to isometry of $\R^2$
for a geodesic triangle.
A geodesic space is called a {\it CAT(0) space} if for any geodesic triangle $T$,
$d(x,y) \le |f(x) - f(y)|$ for all $x,y \in T$ whenever $f : T \ar \R^2$ is a triangle map.  

A metric space $(D,d_D)$ with the Hilbert metric is 
a CAT(0) and Gromov hyperbolic space since the region $D$ is an ellipsoid.
The former derived from a result given in \cite{eg} by Egloff.

\begin{thm}[Egloff]
	Let $H \subset \R^n$ be a convex open set with the Hilbert metric $d_H$.
	Then $(H,d_H)$ is a CAT(0) space if and only if $H$ is an ellipsoid.
\end{thm}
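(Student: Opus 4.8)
The plan is to prove the two implications separately, the forward one being classical and the converse requiring an infinitesimal rigidity argument. For the forward direction, I would first recall that the Hilbert metric is invariant under affine transformations of $\R^n$ preserving $H$, so that every ellipsoid can be normalized to the open unit ball. On the unit ball the cross-ratio definition of $d_H$ reproduces exactly the Beltrami--Klein model of real hyperbolic space $\h^n$ of constant curvature $-1$: geodesics are Euclidean chords and the distance is the familiar logarithmic cross-ratio. Since $\h^n$ is CAT($-1$), and hence CAT(0), the forward implication follows immediately from this identification.

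For the converse, suppose $(H,d_H)$ is CAT(0). The key observation is that the Hilbert metric is a Finsler metric whose tangent norm at a point $x$ is the Minkowski norm $F(x,\cdot)$ determined by the two boundary intersection lengths of the chord through $x$ in a prescribed direction. I would exploit the stability of the CAT(0) condition under rescaling and pointed Gromov--Hausdorff limits: blowing up $(H,\lambda\, d_H)$ at a fixed base point $x_0$ as $\lambda \to \infty$ produces the tangent cone, which here is precisely the normed vector space $(\R^n, F(x_0,\cdot))$. This limit inherits the CAT(0) property. But a normed vector space is CAT(0) if and only if its norm is induced by an inner product: applying the median (CN) comparison inequality successively to the pairs $a,b$ and $a+b,a-b$ yields both directions of the parallelogram law, whence the Jordan--von Neumann criterion applies. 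Consequently $F(x_0,\cdot)$ is Euclidean for every $x_0$, and the Finsler metric $F$ is in fact Riemannian.

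It then remains to show that a Hilbert geometry whose metric is Riemannian must arise from an ellipsoid. Because geodesics of any Hilbert metric are straight Euclidean segments, the resulting Riemannian metric is projectively flat, so by Beltrami's theorem it has constant sectional curvature. A complete Hilbert geometry is a Hadamard space whose boundary sits at finite Euclidean distance, so the constant can be neither zero nor positive; it is negative, and after normalization $(H,d_H)$ is isometric to $\h^n$. Since the only convex domains carrying the Klein model of $\h^n$ are ellipsoids, $H$ is an ellipsoid. The main obstacle I anticipate is the blow-up step: one must verify rigorously that the rescaled limit is genuinely the tangent Minkowski space $(\R^n, F(x_0,\cdot))$ and that CAT(0) descends to it, which amounts to a careful second-order expansion of $d_H$ near $x_0$ together with the stability of the comparison inequality under Gromov--Hausdorff convergence. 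The ensuing normed-space and Beltrami steps are then comparatively routine.
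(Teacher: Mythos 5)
The paper itself offers no proof of this statement: it is quoted as a known theorem, with a citation to Egloff \cite{eg}, so there is no internal argument to compare yours against; I can only judge the sketch on its merits. Your forward direction is correct and classical (an affine map takes the ellipsoid to the unit ball, affine maps preserve cross-ratios, and on the ball the Hilbert metric is, up to a constant factor, the Beltrami--Klein model, hence CAT($-1$) after rescaling and in particular CAT(0)). Ironically, the step you flag as the main obstacle is the safe one: pointed Gromov--Hausdorff (or ultra-) limits of CAT(0) spaces satisfy the four-point comparison condition, for a length metric induced by a continuous Finsler norm field the rescaled distances $\lambda\, d_H(x_0+u/\lambda,\,x_0+w/\lambda)$ converge locally uniformly to $F(x_0,w-u)$, and a normed space that is CAT(0) is pre-Hilbert by exactly the parallelogram-law argument you describe. (Minor caveat: one should assume $H$ bounded, or at least properly convex, for $d_H$ to be a metric at all.)

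The genuine gap is the final step, ``the Finsler metric is in fact Riemannian, so Beltrami's theorem gives constant curvature.'' What the tangent-cone argument delivers is only that each tangent norm $F(x_0,\cdot)$ is induced by an inner product, i.e.\ a metric tensor $g_x$ which is merely \emph{continuous}: the Hilbert--Finsler structure has exactly the regularity of $\partial H$, which for a general convex domain is only Lipschitz. Beltrami's theorem is a curvature statement requiring at least $C^2$ data, so at this regularity the step fails as written; you would first need to show that pointwise inner-product tangent norms force smoothness of $\partial H$, and that is essentially where the real work in Kelly--Straus's or Egloff's arguments takes place. A cleaner repair is to bypass Beltrami entirely: CAT(0) implies non-positive curvature in the sense of Busemann, and the theorem of Kelly and Straus (1958) says a Hilbert geometry that is Busemann non-positively curved is an ellipsoid. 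Separately, your exclusion of curvature zero because ``the boundary sits at finite Euclidean distance'' is not an argument --- the Klein model of $\h^n$ has the same property; flatness would instead be ruled out, e.g., by the fact that a Hilbert geometry isometric to a normed vector space must come from a simplex, whose norm is polyhedral and not Euclidean.
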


The latter owe to a result of Karlsson-Noskov \cite{KN}.

\begin{thm}[Karlsson-Noskov]
	Let $H \subset \R^n$ be a convex open set with the Hilbert metric $d_H$.
	If $H$ is an ellipsoid, then $(H,d_H)$ is a Gromov hyperbolic.
\end{thm}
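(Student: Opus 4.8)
The plan is to recognise an ellipsoid equipped with the Hilbert metric as the Beltrami--Klein model of hyperbolic $n$-space, and then to invoke the fact that hyperbolic space is Gromov hyperbolic. First I would reduce to the open unit ball $B = \{\, v \in \R^n : \|v\| < 1 \,\}$. Any two ellipsoids in $\R^n$ are carried to one another by an affine transformation, and the Hilbert metric is built from the quantity
\[
	\frac{\|y-a\|\,\|x-b\|}{\|y-b\|\,\|x-a\|},
\]
formed from the four collinear points $a,x,y,b$ with $a,b \in \partial H$. An affine map preserves collinearity, ratios of distances along a line, and sends $\partial H$ onto the boundary of the image; hence it preserves this quantity and so is an isometry of the corresponding Hilbert metrics. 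Thus it suffices to prove the statement for $H = B$.

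The heart of the matter is the identification of $(B, d_H)$ with hyperbolic space. The straightness property recorded in the previous proposition shows that the geodesics of $(B,d_H)$ are exactly the Euclidean chords of $B$, and these are precisely the geodesics of the Klein model; moreover the Hilbert distance along a chord is given by the very cross-ratio formula that defines the Klein distance. Hence $(B, d_H)$ is isometric, up to a positive constant rescaling of the metric, to $\h^n$ with its metric of constant curvature $-1$. I regard this identification as the main obstacle: although classical, it is where the real content lies, and it must be verified in every direction and not merely along a single chord. It is worth stressing that the $\mathrm{CAT}(0)$ property furnished by Egloff's theorem does \emph{not} suffice here, since Euclidean space is $\mathrm{CAT}(0)$ yet far from Gromov hyperbolic; one genuinely needs the strictly negative curvature that the hyperbolic structure supplies.

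Finally, granting the isometry $(B, d_H) \cong \h^n$ up to scale, the conclusion is immediate. Hyperbolic space $\h^n$ is a complete, simply connected Riemannian manifold of constant curvature $-1$, hence a $\mathrm{CAT}(-1)$ space, and every $\mathrm{CAT}(-1)$ space satisfies the Gromov thin-triangles (equivalently Gromov-product) inequality with an explicit universal constant $\delta$. Rescaling the metric by a positive constant only rescales $\delta$, and transporting this through the affine isometry of the first step shows that $(H, d_H)$ is $\delta'$-hyperbolic for a suitable $\delta' \ge 0$. This is exactly the assertion that $(H, d_H)$ is Gromov hyperbolic, so the reduction and the conclusion are formal once the metric identification of the middle step is in hand.
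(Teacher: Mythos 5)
Your argument is correct, but there is nothing in the paper to compare it against: the paper states this theorem without proof, quoting it as a known result of Karlsson--Noskov \cite{KN}. Their actual theorem is considerably more general (Gromov hyperbolicity of Hilbert geometries on a wide class of convex domains, of which the ellipsoid is the simplest instance), and the paper simply imports the ellipsoid case. For that case your route --- affine reduction to the unit ball, identification of the Hilbert metric on the ball with the Beltrami--Klein model of $\h^n$ up to a constant factor, then $\mathrm{CAT}(-1)$ implies $\delta$-hyperbolicity --- is the standard classical shortcut, and it in fact proves more than the statement asks: constant curvature $-1$ gives $\mathrm{CAT}(-1)$, which subsumes both this theorem and, for the ellipsoid, Egloff's $\mathrm{CAT}(0)$ statement quoted just above it. Your two flags are well placed. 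Affine invariance is immediate because an affine map scales Euclidean distances along each fixed line by a common factor, so the two ratios in the defining quantity are separately preserved; and the one step with real content is the classical verification that the Klein distance is $\frac{1}{2}\log$ of the cross ratio in every direction, which is most cleanly seen through the hyperboloid model --- note that the paper's own definition of $d_D$ via the $B$-cross-ratio for the form of signature $(n-1,1)$, together with its Observation that $d_D = d_H$, is essentially this identification in disguise. Your remark that Egloff's $\mathrm{CAT}(0)$ conclusion alone cannot yield Gromov hyperbolicity is exactly right, as $\R^n$ shows, and the final bookkeeping (rescaling the metric rescales $\delta$, and transporting through the affine isometry preserves $\delta$-hyperbolicity) is routine and correctly handled.
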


The point of our definition of the Hilbert metric can be seen in the proof 
of the following proposition.

\begin{prop}\label{isom}
	Let $W$ be a Coxeter group with signature $(n-1,1)$.
	The normalized action of any $w \in W$ is an isometry on $(D,d_D)$.
\end{prop}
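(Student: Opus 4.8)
The plan is to show that every element $w \in W$ acts on $(D,d_D)$ as an isometry by verifying that the normalized action preserves the cross ratio that defines $d_D$. Recall that $d_D(x,y) = \frac{1}{2}\log[a,x,y,b]$ where $a,b \in \partial D$ are the endpoints of the Euclidean segment through $x$ and $y$. Since $W$ is generated by the $B$-reflections $s_\alpha$ with $\alpha \in \Delta$, it suffices to prove the claim for a single such reflection; the general case then follows because a composition of isometries is an isometry, and because the normalized action is defined so that it respects composition.

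First I would pin down what the \emph{normalized} action is: for $w \in W$ and $x \in D \subset V_1$, the normalized action sends $x$ to $\widehat{w(x)} = w(x)/|w(x)|_1$, so that the image again lies in $V_1$. The key structural fact I want to exploit is that $w$ acts on $V$ as a \emph{linear} map that preserves the bilinear form $B$, i.e. $q(w(u)) = q(u)$ for all $u \in V$, since each $s_\alpha$ is a $B$-reflection and $B$-reflections are $B$-orthogonal. Linearity gives $w(u) - w(v) = w(u-v)$, so for any two difference vectors appearing in the cross ratio we have $q(w(u)-w(v)) = q(u-v)$. This means the raw linear action $u \mapsto w(u)$ preserves all four of the quantities $q(c-a), q(b-d), q(c-d), q(b-a)$ entering $[a,b,c,d]$, and hence preserves the cross ratio exactly.

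The remaining and genuinely delicate point is the passage from the linear action on $V$ to the normalized action on $V_1$, together with the bookkeeping of boundary points. Given $x,y \in D$ with segment endpoints $a,b \in \partial D$, the images $w(a), w(x), w(y), w(b)$ are collinear in $V$ (linearity preserves the line), with $w(x), w(y)$ between $w(a)$ and $w(b)$; normalizing replaces each point $w(p)$ by $\widehat{w(p)} = w(p)/|w(p)|_1$, which is a radial rescaling toward the image line's intersection with $V_1$. Here is exactly where the advantage of the $B$-cross ratio enters: by Proposition \ref{1}, the cross ratio is invariant under replacing each of four collinear points by a point on the ray through a common center, so normalization (a radial projection onto $V_1$) does not change the cross ratio. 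Thus
\[
	[\,\widehat{w(a)},\widehat{w(x)},\widehat{w(y)},\widehat{w(b)}\,]
	= [\,w(a),w(x),w(y),w(b)\,]
	= [a,x,y,b],
\]
where the first equality is Proposition \ref{1} and the second is $B$-invariance of $q$ under $w$. I would also note that $\widehat{w(a)}, \widehat{w(b)} \in \partial D$, since $w$ preserves $Q$ and the normalization keeps us in $V_1$, so they are the correct endpoints for computing $d_D(\widehat{w(x)},\widehat{w(y)})$.

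I expect the main obstacle to be the second paragraph's sign and side-of-the-cone issues rather than the cross-ratio algebra, which is essentially forced once Proposition \ref{1} is invoked. Specifically, I must confirm that the normalized images of $a,x,y,b$ occur in the correct order on their segment and that normalization does not flip any point to the antipodal side of the ellipsoid, so that $\widehat{w(a)},\widehat{w(b)}$ genuinely lie on $\partial D$ and bracket $\widehat{w(x)},\widehat{w(y)}$. This uses that $w$ preserves $Q_-^+$ (the positive side of the cone containing $o$) and that $|w(p)|_1$ has a controlled sign on $D$; once this is in hand, the straightness and the endpoint identification combine to give $d_D(\widehat{w(x)},\widehat{w(y)}) = \frac{1}{2}\log[a,x,y,b] = d_D(x,y)$, completing the proof for a generator and hence, by composition, for all of $W$.
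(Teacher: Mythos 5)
Your proposal is correct and takes essentially the same route as the paper's own proof: invariance of $q$ under the (linear) $B$-reflections gives invariance of the cross ratio under the unnormalized action, and Proposition \ref{1}, applied with rays through the common center ${\bf 0}$, shows that normalization onto $V_1$ leaves the cross ratio unchanged. The side-of-the-cone issue you flag at the end is resolved exactly as in the paper, by observing that the image of a segment in $Q_-^+$ does not contain ${\bf 0}$, so the image points stay in $Q_-^+$, the normalized endpoints lie on $\partial D$, and $s_\alpha(a)-s_\alpha(b)\notin Q$.
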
 

\begin{proof}
	It suffices to show that the cross ratio 
	defining the Hilbert metric $d$ is invariant under 
	any normalized $B$-reflection $s_{\alpha}$ ($\alpha \in \Delta$).
	We take $x,y \in D$ arbitrary and let $a,b \in \partial D$ be the points 
	satisfying $d(x,y) = (1/2)\log[a,x,y,b]$.
	
	We check that $B$-reflection $s_{\alpha}$ preserves $q$.
	For any $v \in V$ and $\alpha \in \Delta$ we have
	\[
		q(s_{\alpha}(v))= q(v) - 4B(v,\alpha)^2 + 4B(v,\alpha)^2q(\alpha)
	                 	= q(v).
	\]
	This means that $[a,x,y,b] = [s_\alpha(a),s_\alpha(x),s_\alpha(y),s_\alpha(b)]$.
	
	Our remaining task is to show that $[s_\alpha(a),s_\alpha(x),s_\alpha(y),s_\alpha(b)]$ 
	does not vary under the normalization for $|*|_1$ in $Q_-^+$.
	This follows from Proposition \ref{1}.
	In fact, since $s_{\alpha}$ is linear, a segment is mapped to a segment.
	So the image $s_\alpha([a,b])$ coincides with $[s_{\alpha}(a),s_{\alpha}(b)]$.
	In particular four points $\{s_\alpha(a),s_\alpha(x),s_\alpha(y),s_\alpha(b)\}$ 
	are collinear.
	Furthermore $s_\alpha(x)$ and $s_\alpha(y)$ are in $Q_-^+$
	because the image of a segment in $Q_-^+$ by $s_\alpha$ does not include ${\bf 0}$.
	This means that $s_\alpha(a)-s_\alpha(b) \not\in Q$.
	At last, recall that for any $v \in Q_-^+$, $\widehat{v}$ lies on the ray 
	through ${\bf 0}$ and $v$.
	Therefore for each $z \in \{s_\alpha(a),s_\alpha(x),s_\alpha(y),s_\alpha(b)\}$,
	we have a ray through ${\bf 0}$ and $\widehat{z}$.
\end{proof}


\section{The properness of the normalized action}
We verify that the normalized action on $(D,d_D)$ is proper.
In general an isometric group action $G \curvearrowright X$ on a metric space $X$
is {\it proper} if for any compact set $F$ the set
\[
	\{g\in G \ \vert\ g(F) \cap F \neq \emptyset\} \subset G
\]
is finite.
We denote the action $G \curvearrowright X$ by $g.x$ for $g \in F$ and $x \in X$. 
If $X$ is locally compact and there exists a fundamental region $R$ 
(see Definition \ref{kihonryoiki})
then the action is proper.

\subsection{A fundamental region and a Dirichret region}
We define two open sets (with respect to the subspace topology of $V_1$)
\[
	K := \{v \in D \ \vert\ \forall \alpha \in \Delta, B(\alpha,v) < 0\}
	\quad \quad \text{and} \quad \quad 
	K' := K \cap D'.
\]
For $\alpha \in \Delta$ we set 
$P_\alpha = \{v \in V_1\ \vert\ \text{$\alpha$-th coordinate\ of\ $v$\ is\ $0$}\}$
and $H_\alpha = \{v \in V_1\ \vert\ B(v,\alpha) = 0\}$.
We define 
\[
	\PP = \{v \in V_1 \ \vert\ \forall \alpha \in \Delta, B(\alpha,v) < 0\}
	\quad \quad \text{and}\quad \quad
	\PP' = \PP \cap \inte(\conv(\widehat{\Delta})).
\] 
Then clearly $K = \PP \cap D$. 
Moreover, we will see that $K' = \PP' \cap D$ (Lemma \ref{katachi}). 
Since $\PP$ (resp. $\PP'$) is bounded by finitely many $n-1$ dimensional subspaces 
$\{H_\alpha\ \vert\ \alpha \in \Delta\}$ 
(resp. $\{H_\alpha\ \vert\ \alpha \in \Delta\}$ and $\{ P_\alpha \ \vert\ \alpha \in \Delta\}$),
actually $\overline{\PP}$ (resp. $\overline{\PP'}$) is a polyhedron.
In general, $\PP$ is not a simplex.
The following example of $W$ such that $\PP$ is not a simplex is given by Yohei Komori.
\[
	W = \langle s_1,\ldots,s_5\ \vert\ s_i^2, (s_{i-1}s_i)^4\rangle,
\]
where $i= 1,\ldots,5$ and $s_0 = s_5$.
In fact the Coxeter graph of this does not appear 
in the list given by Schlettwein \cite{schlettwein}.

\begin{dfn}\label{kihonryoiki}
	We assume that a group $G$ acts on a metric space $X$ isometrically.  
	We denote the action by $g.x$ for $g \in G$ and $x \in X$.
	Then an open set $A \subset X$ is
	\begin{itemize}
	\item 
	a {\it fundamental region} if  
	$\overline{G.A} = X$ and $g.A \cap A = \emptyset$ for any $g \in G$ where
	$\overline{G.A}$ is the topological closure of $G.A$;
	\item
	the {\it Dirichlet region} at $o \in A$ if $A$ equals to the set
	\[
	\{x \in D\ \vert\ 
		d(o,x) < d(o,w\cdot x)\ \text{for}\ w \in W \setminus \{\id\}\}.
	\]
	\end{itemize}
\end{dfn}

We will show that $K$ (resp. $K'$) is the Dirichlet region at any $x \in K$ 
hence a fundamental region for the (resp. restricted) normalized action of $W$ on $D$.

\begin{rem}\label{bb}
	By \cite[Proposition 4.2.5]{bb},
	for $w \in W$ and $s_\alpha \in S$ if $|sw| > |w|$ then 
	all coordinates of $w^{-1}(\alpha)$ are non-negative.
\end{rem}

\begin{prop}\label{eigen}
	For any $z \in K$, we have the followings.
	\begin{itemize}
	\item[(i)] For any $w \in W \setminus \{id\}$, 
			   there exists $\alpha \in \Delta$ so that $B(w\cdot z,\alpha) > 0$:
	\item[(ii)] For any $w \in W$, $|w(z)|_1 > 0$.
				Moreover, if $z \in \inte(\conv(\widehat{\Delta}))$ 
				then all coordinates of $w(z)$ are positive.
	\end{itemize}
\end{prop}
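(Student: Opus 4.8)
The plan is to prove both parts by induction on the word length $|w|_S$, using Remark \ref{bb} together with the defining inequalities of $K$. The single computational fact that drives everything is the following sign principle: since $z \in K$ satisfies $B(\alpha_i, z) < 0$ for every $\alpha_i \in \Delta$, any $\gamma = \sum_i \gamma_i \alpha_i$ with all $\gamma_i \ge 0$ (and $\gamma \neq \mathbf{0}$) satisfies $B(\gamma, z) = \sum_i \gamma_i B(\alpha_i, z) < 0$, and symmetrically $B(\gamma, z) > 0$ when all $\gamma_i \le 0$. Because each $B$-reflection preserves $B$, I will freely rewrite $B(w(z), \alpha) = B(z, w^{-1}(\alpha))$, which reduces questions about $w(z)$ to questions about the root $w^{-1}(\alpha) \in \Phi$; by Remark \ref{daijoubu} such a root lies in $V^+ \cup V^-$, so ``non-negative coordinates'' upgrades to ``strictly positive coordinates'' and likewise on the negative side.

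For part (ii) I would induct on $|w|_S$. The case $w = \id$ is immediate: $|z|_1 = 1 > 0$, and if $z \in \inte(\conv(\widehat{\Delta}))$ then, writing $z$ as a strict convex combination of the $\widehat{\alpha_i} = \alpha_i/o_i$, all coordinates of $z$ are positive. For the inductive step write $w = s_\beta w'$ with $|w|_S = |w'|_S + 1$, so $|s_\beta w'|_S > |w'|_S$, and set $u = w'(z)$. By Remark \ref{bb}, $(w')^{-1}(\beta)$ has non-negative coordinates, hence lies in $V^+$, so the sign principle gives $B(\beta, u) = B((w')^{-1}(\beta), z) < 0$. Since $q(\beta) = 1$, we have $w(z) = s_\beta(u) = u - 2B(\beta, u)\beta = u + c\beta$ with $c = -2B(\beta, u) > 0$; that is, passing from $u$ to $w(z)$ only increases the $\beta$-coordinate. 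Consequently $|w(z)|_1 = |u|_1 + c\, o_\beta > |u|_1 > 0$ by the induction hypothesis, proving the first assertion; and if in addition $u \in V^+$ (the moreover case, where $z \in \inte(\conv(\widehat{\Delta}))$), then adding a positive multiple of $\beta$ keeps every coordinate positive, so $w(z) \in V^+$.

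For part (i), given $w \neq \id$ I would choose a simple reflection $s_\alpha$ with $|s_\alpha w|_S < |w|_S$, which exists for every non-identity element. Applying Remark \ref{bb} to $s_\alpha w$ (noting $|s_\alpha (s_\alpha w)|_S = |w|_S > |s_\alpha w|_S$) shows that $(s_\alpha w)^{-1}(\alpha) = -w^{-1}(\alpha)$ has non-negative coordinates, i.e.\ $w^{-1}(\alpha) \in V^-$. The sign principle then yields $B(w(z), \alpha) = B(z, w^{-1}(\alpha)) > 0$. Finally, part (ii) guarantees $|w(z)|_1 > 0$, so the normalization $w \cdot z = w(z)/|w(z)|_1$ scales by a positive factor and does not change the sign; hence $B(w \cdot z, \alpha) > 0$, as required.

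The main obstacle is orchestrating the two applications of Remark \ref{bb} in opposite length-directions (an ascent $|s_\beta w'|_S > |w'|_S$ for (ii) and a descent $|s_\alpha w|_S < |w|_S$ for (i)) and marrying them with the strict negativity defining $K$ to pin down the sign of $B(\beta, u)$; everything else is bookkeeping. A secondary point that must not be overlooked is the logical dependence: the statement of (i) refers to the normalized action, so it genuinely needs the positivity $|w(z)|_1 > 0$ from (ii) before the linear sign computation can be transported through the normalization.
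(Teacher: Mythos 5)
Your proof is correct and follows essentially the same route as the paper: induction on word length driven by Remark \ref{bb}, the strict inequalities $B(\alpha,z)<0$ defining $K$, and the choice of $\alpha$ with $|s_\alpha w|<|w|$ so that $w^{-1}(\alpha)=-(s_\alpha w)^{-1}(\alpha)$ has non-positive coordinates, with positivity of $|w(z)|_1$ licensing the passage to the normalized action. The only differences are organizational (you run the induction for (ii) alone and deduce (i) afterwards, where the paper inducts on both simultaneously), and you make explicit the ``moreover'' clause via $w(z)=w'(z)-2B(\beta,w'(z))\beta$, a point the paper's proof leaves implicit.
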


\begin{proof}
	We prove (i) and (ii) at the same time by the induction for the word length.

	In the case $|w| = 1$, there exists $\alpha \in \Delta$ so that $w = s_\alpha$.
	Then we have
	\[
		|s_\alpha (z)|_1 = |z|_1 -2B(z,\alpha)|\alpha|_1 > 0.
	\]
	Therefore 
	\[
		B(s_\alpha \cdot z,\alpha) = \frac{B(z,-\alpha)}{|s_\alpha (z)|_1} > 0,
	\]
	since $s_\alpha(\alpha) = -\alpha$.

	If $|w| > 1$,
	there exist $\alpha \in \Delta$ and $w' \in W$
	satisfying $w = s_\alpha w'$.
	In particular $|w'| = |w| -1$.
	We have $|w'(z)|_1 > 0$ by the assumption of the induction.
	From Remark \ref{bb} we have $v_\beta \ge 0$ if 
	$w'^{-1}(\alpha) = \sum_{\beta \in \Delta} v_\beta \beta$.
	Then we see that 
	\begin{align}\label{hutoushiki}
		|w(z)|_1 &= |s_\alpha(w'(z))|_1 = |w'(z)|_1 - 2B(z,w'^{-1}(\alpha))|\alpha|_1 \notag \\
				 &= |w'(o)|_1 -2 |\alpha|_1\sum_{\beta \in \Delta} v_\beta B(z,\beta) > 0,
	\end{align}
	because $z \in K$.
	This shows (ii).
	In addition, we have
	\begin{align*}
		B(w \cdot z,\alpha) = \frac{-B(w'(z),\alpha)}{|s_\alpha(w'(z))|_1}
							= \frac{-\sum_{\beta \in \Delta} v_\beta B(z,\beta)}
							       {|s_\alpha(w'(o))|_1}
							> 0.
	\end{align*}
	Hence we have (i).	
\end{proof}

This lemma ensures that $K$ and $K'$ are not empty.

\begin{lem}\label{katachi}
	We have the following:
	\begin{itemize}
	\item[(i)]
	$K' = K \cap \inte(\conv(\widehat{\Delta})) = \PP' \cap D.$
	\item[(ii)]
	$K'$ (hence $K$) is not empty.
	\end{itemize}
\end{lem}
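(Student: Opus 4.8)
The plan is to dispatch the two set-theoretic identities in (i) by unwinding the definitions, the only geometric input being an identification of the open simplex with the unit vectors of strictly positive coordinates, and then to prove (ii) by exhibiting the normalized eigenvector $o$ as an explicit interior point of $K'$. First I would record the identification
\[
	\inte(\conv(\widehat{\Delta})) = V^+ \cap V_1.
\]
Indeed, each $\widehat{\alpha} = \alpha/|\alpha|_1$ is a \emph{positive} scalar multiple of the basis vector $\alpha$, so a convex combination $\sum_{\alpha} c_\alpha \widehat{\alpha}$ with $c_\alpha > 0$ and $\sum c_\alpha = 1$ has all coordinates strictly positive; conversely every $v \in V_1$ with positive coordinates arises this way. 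Thus a point of $V_1$ lies in the open simplex precisely when all its coordinates are positive, i.e.\ when it lies in $V^+$.

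For (i), recall $K = \PP \cap D$ and $\PP' = \PP \cap \inte(\conv(\widehat{\Delta}))$. Unwinding the definition of $D'$ and using $K \subseteq D$, the first equality $K' = K \cap \inte(\conv(\widehat{\Delta}))$ is immediate. The second equality is then a purely set-theoretic rearrangement,
\[
	K \cap \inte(\conv(\widehat{\Delta})) = (\PP \cap D) \cap \inte(\conv(\widehat{\Delta})) = (\PP \cap \inte(\conv(\widehat{\Delta}))) \cap D = \PP' \cap D,
\]
so (i) carries no difficulty beyond the identification above.

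For (ii), the substantive step is to verify that the normalized negative eigenvector $o$ lies in $K'$. Since $o$ is an eigenvector of the Gram matrix $B$ for the negative eigenvalue $\lambda$, I would compute, for each $i$,
\[
	B(\alpha_i, o) = \sum_j B(\alpha_i,\alpha_j) o_j = \lambda o_i.
\]
By Lemma \ref{korekore} all coordinates $o_i$ are positive, and $\lambda < 0$, so $B(\alpha, o) < 0$ for every $\alpha \in \Delta$, giving $o \in \PP$. Next, $q(o) = B(o,o) = \lambda\|o\|^2 = \lambda < 0$ and $|o|_1 = \sum_i o_i^2 = \|o\|^2 = 1 > 0$ place $o$ in the open ellipsoidal region $V_1 \cap Q_-^+ = D$, whence $o \in \PP \cap D = K$. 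Finally, all coordinates of $o$ are positive, so $o \in V^+ \cap V_1 = \inte(\conv(\widehat{\Delta}))$, and therefore $o \in K \cap \inte(\conv(\widehat{\Delta})) = K'$. Since the defining inequalities for $K'$ are strict, $o$ is in fact an interior point, so $K' \neq \emptyset$, and a fortiori $K \neq \emptyset$.

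The only point requiring care (rather than a deep obstacle) is the sign computation $B(\alpha_i, o) = \lambda o_i$, where the two structural inputs combine: the Perron--Frobenius positivity of the eigenvector furnished by Lemma \ref{korekore}, and the negativity of $\lambda$ forced by the signature $(n-1,1)$ assumption. I would also keep the eigenvector convention consistent --- $o$ is the $B$-eigenvector normalized to Euclidean norm $1$ with positive coordinates --- since this is exactly what yields $|o|_1 = 1$ and hence places $o$ inside the ellipsoid $D$ rather than merely in the cone $Q_-$.
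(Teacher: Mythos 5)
Your part (ii) is essentially the paper's argument: the paper also computes $B(o,\alpha)<0$ from the eigenvector equation together with the Perron--Frobenius positivity of the coordinates of $o$ (Lemma \ref{heimen}) and the negativity of the eigenvalue, and your norm bookkeeping ($|o|_1=\|o\|^2=1$, $q(o)=\lambda<0$, so $o\in D$) is correct. Likewise your second equality in (i), $K \cap \inte(\conv(\widehat{\Delta})) = \PP' \cap D$, is indeed pure set rearrangement from $K=\PP\cap D$ and $\PP'=\PP\cap\inte(\conv(\widehat{\Delta}))$, and your identification $\inte(\conv(\widehat{\Delta}))=V^+\cap V_1$ is fine.

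The genuine gap is in the first equality of (i), which you dismiss as ``immediate'' from ``unwinding the definition of $D'$.'' The region $D'$ is not $D\cap\inte(\conv(\widehat{\Delta}))$; it is the $W$-invariant truncation $D\setminus W\cdot R$, where $R = D\setminus\conv(\widehat{\Delta})$ is the part of the ellipsoid outside the simplex --- so membership in $D'$ constrains the \emph{entire orbit} of a point, not just the point itself. Consequently $K'\subset K\cap\inte(\conv(\widehat{\Delta}))$ is clear, but the reverse inclusion is the substantive content of (i): one must show $w\cdot R\cap\bigl(K\cap\inte(\conv(\widehat{\Delta}))\bigr)=\emptyset$ for every $w\in W$. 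The paper does this via Proposition \ref{eigen}(ii), whose proof is an induction on word length using the root-positivity fact of Remark \ref{bb}: for $z\in K\cap\inte(\conv(\widehat{\Delta}))$ and any $w\in W$, all coordinates of $w(z)$ are positive, so $w\cdot z$ never leaves $\conv(\widehat{\Delta})$, i.e.\ $z\notin w^{-1}\cdot R$. Your proposal never invokes this orbit control, so as written it does not establish $K\cap\inte(\conv(\widehat{\Delta}))\subset D'$; without it, even your conclusion in (ii) that $o\in K'$ (as opposed to $o\in K\cap\inte(\conv(\widehat{\Delta}))$) is unjustified, since it leans on the unproved first equality. The fix is exactly the paper's step: import Proposition \ref{eigen}(ii) to show the orbit of any point of $K\cap\inte(\conv(\widehat{\Delta}))$ stays in the simplex.
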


\begin{proof}
	(i)\ 
	Recall that $R = D \setminus \conv(\widehat{\Delta})$.
	We set $K_{int} = K \cap \inte(\conv(\widehat{\Delta}))$.
	Then clearly $K' \subset K_{int}$.
	To see the inverse inclusion, 
	it suffices to show that $w \cdot R \cap K_{int} = \emptyset$ for any $w \in W$.
	Take $x \in K_{int}$ arbitrarily.
	Then $x_\alpha > 0$ for any $\alpha \in \Delta$ 
	if we write $x = \sum_{\alpha \in \Delta} x_\alpha \alpha$.
	Now we assume that $w \cdot x \in R$ then there exists $\alpha \in \Delta$
	such that $\alpha$-th coordinate of $w \cdot x$ is non-positive. 
	This contradicts to the latter claim of Proposition \ref{eigen} (ii).
	
	(ii)\ 
	Let $o$ be the normalized eigenvector for the negative eigenvalue $-\lambda$ of $B$.
	Then all coordinates of $o$ are positive by the definition and Lemma \ref{heimen}.
	For any $\alpha \in \Delta$, we have 
	\[
	B(o,\alpha) = -\lambda(o,\alpha) <0.
	\]
	Thus $o \in K$.
	Furthermore by the same argument as the proof of (i), we also have $o \in K'$
	since all coordinates of $o$ are positive.
\end{proof}

As a consequence of Proposition \ref{eigen}, we have the following.

\begin{lem}\label{kihon2}
	For any $w \in W \setminus\{\id\}$, we have $w \cdot K \cap K = \emptyset$.
\end{lem}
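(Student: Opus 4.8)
The plan is to derive this immediately as a contrapositive of Proposition \ref{eigen}(i). Suppose toward a contradiction that $w \cdot K \cap K \neq \emptyset$ for some $w \in W \setminus \{\id\}$. Then there is a point lying in both sets; since it lies in $w \cdot K$, it has the form $w \cdot z$ for some $z \in K$, and since it also lies in $K$, we have $w \cdot z \in K$. By the very definition of $K$, membership $w \cdot z \in K$ means
\[
	B(\alpha, w \cdot z) < 0 \quad \text{for every } \alpha \in \Delta.
\]

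On the other hand, because $z \in K$ and $w \neq \id$, Proposition \ref{eigen}(i) furnishes some $\beta \in \Delta$ with $B(w \cdot z, \beta) > 0$. Using that $B$ is symmetric, $B(\beta, w \cdot z) = B(w \cdot z, \beta) > 0$, which directly contradicts the displayed inequality taken at $\alpha = \beta$. Hence no such $w$ and $z$ can exist, and $w \cdot K \cap K = \emptyset$ for all $w \in W \setminus \{\id\}$.

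I do not expect any genuine obstacle here: the statement is essentially a repackaging of Proposition \ref{eigen}(i), and the only things to be careful about are bookkeeping points. First, one must correctly unwind the nonemptiness of the intersection into the existence of a single $z \in K$ whose image $w \cdot z$ again lies in $K$ (rather than working with two different preimages). Second, one should note the harmless swap of arguments $B(\alpha, v) = B(v, \alpha)$ to match the sign convention used in the definition of $K$ against the one in Proposition \ref{eigen}(i). Neither of these requires computation, so the proof is short.
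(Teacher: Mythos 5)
Your proof is correct and is exactly the argument the paper intends: the lemma is stated there without proof as an immediate consequence of Proposition \ref{eigen}(i), and your contradiction via the sign condition $B(\alpha, w\cdot z) < 0$ defining $K$ against $B(w\cdot z,\beta) > 0$ is that consequence spelled out. The bookkeeping points you flag (a single $z$ with $w\cdot z \in K$, and symmetry of $B$) are handled correctly.
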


\begin{lem}\label{majiwari}
	For any $x \in K$ and $\xi \in \partial D$ (or in $\partial D' \setminus D$)
	the Euclidean segment $[x,\xi]$ joining $x$ and $\xi$ is not contained in 
	any hyperplane $w\cdot H_\alpha$ ($w \in W$, $\alpha \in \Delta$).
\end{lem}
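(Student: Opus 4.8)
The plan is to reduce the statement to a single regularity fact about the chamber $K$: no point of $K$ lies on any reflection wall. Indeed, the Euclidean segment $[x,\xi]$ always contains its endpoint $x$, so if $[x,\xi]$ were contained in some hyperplane $w\cdot H_\alpha$ we would in particular have $x \in w\cdot H_\alpha$. It therefore suffices to prove that for every $w \in W$ and every $\alpha \in \Delta$ one has $x \notin w\cdot H_\alpha$; note that in this reduction the position of the endpoint $\xi$ (whether on $\partial D$ or on $\partial D' \setminus D$) plays no role whatsoever.

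First I would translate ``$x$ lies on the wall $w\cdot H_\alpha$'' into a linear condition on $x$. Writing $x = w\cdot v$ with $v \in H_\alpha$ and using that every $B$-reflection preserves $B$, hence so does $w$, I compute $B(x, w\alpha) = |w(v)|_1^{-1} B(w(v), w\alpha) = |w(v)|_1^{-1} B(v, \alpha) = 0$. Thus membership $x \in w\cdot H_\alpha$ forces $B(x, w\alpha) = 0$, which is the linear condition I want to contradict.

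The heart of the argument is then to rule out $B(x, w\alpha) = 0$. Since $q(w\alpha) = q(\alpha) = 1 \neq 0$, the reflection $s_{w\alpha} = w s_\alpha w^{-1} \in W \setminus \{\id\}$ is well defined, and $B(x,w\alpha) = 0$ gives $s_{w\alpha}(x) = x$ in $V$; as $x \in V_1$ this means the normalized action also fixes $x$, i.e. $s_{w\alpha}\cdot x = x$. Now I apply Proposition \ref{eigen}(i) to $z = x \in K$ and to the nontrivial element $s_{w\alpha}$: it produces some $\gamma \in \Delta$ with $B(s_{w\alpha}\cdot x, \gamma) > 0$. But $s_{w\alpha}\cdot x = x$, so $B(x,\gamma) > 0$, contradicting the defining inequalities $B(x,\gamma) < 0$ of $K$. (Equivalently, $x = s_{w\alpha}\cdot x$ shows $x \in K \cap (s_{w\alpha}\cdot K)$, which is empty by Lemma \ref{kihon2}.) This contradiction gives $x \notin w\cdot H_\alpha$ and closes the proof.

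The main obstacle, and really the only nonroutine point, is recognizing that lying on a wall makes $x$ a fixed point of a nontrivial reflection, so that the fundamental-region machinery of Proposition \ref{eigen} (or Lemma \ref{kihon2}) becomes applicable; once that is seen, everything collapses to a single sign contradiction. I would also verify the innocuous normalization step, namely that $|w(v)|_1 \neq 0$ wherever the image point $w\cdot v = x$ is genuinely defined, which holds precisely because $x$ is assumed to be an honest point of the hyperplane $w\cdot H_\alpha$.
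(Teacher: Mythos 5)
Your proof is correct. The paper actually states Lemma \ref{majiwari} with no proof at all, so there is nothing to compare against; your argument supplies the missing one in what is surely the intended way. The reduction to the endpoint $x$ is valid (and rightly shows $\xi$ is irrelevant), the translation of $x \in w\cdot H_\alpha$ into $B(x,w(\alpha))=0$ is sound since $B$ is $W$-invariant and $q(w(\alpha))=q(\alpha)=1$, and the punchline --- that a point of $K$ lying on a wall would be fixed by the nontrivial reflection $ws_\alpha w^{-1}$, contradicting Proposition \ref{eigen}(i), or equivalently $x \in K \cap (ws_\alpha w^{-1})\cdot K$ contradicting Lemma \ref{kihon2} --- uses exactly the fundamental-region machinery the paper has already established. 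Your version also suffices for the lemma's only application, in Proposition \ref{shortest}: since $o \in K$ lies off every wall, the segment $[o,\xi]$ is not contained in any affine hyperplane $w\cdot H_\alpha$ of $V_1$ and hence meets each one in at most one point, which is the transversality used there.
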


\begin{lem}\label{kihon}
	For any $x \in K$, $K$ is the Dirichlet region at $x$.
\end{lem}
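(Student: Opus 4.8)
The plan is to prove the two inclusions between $K$ and the Dirichlet region, which I write as $\mathcal{D}_x := \{y \in D \mid d_D(x,y) < d_D(x,w\cdot y)\ \text{for all}\ w \in W\setminus\{\id\}\}$. Since the normalized action is isometric (Proposition \ref{isom}) and each $s_\alpha$ is an involution, the defining inequality can be rewritten as $d_D(x,y) < d_D(w\cdot x, y)$, so $\mathcal{D}_x$ is exactly the set of points of $D$ strictly closer to $x$ than to every other point of the orbit $W\cdot x$. The geometric engine throughout is the identification of each wall $H_\alpha \cap D$ with the perpendicular bisector of the pair $\{x, s_\alpha\cdot x\}$: because $s_\alpha$ is an isometry fixing $H_\alpha \cap D$ pointwise, one has $H_\alpha \cap D \subseteq \{z \in D \mid d_D(z,x) = d_D(z, s_\alpha\cdot x)\}$, and since $(D,d_D)$ is the projective (Klein) model of hyperbolic space, so that $H_\alpha \cap D$ is a totally geodesic hyperplane by the straightness property, this containment is an equality. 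Writing $H_\alpha^- = \{y \in D \mid B(\alpha,y)<0\}$ for the side of $H_\alpha$ containing $x$ and $H_\alpha^+$ for the opposite side, it follows that $\{y \mid d_D(x,y) < d_D(x, s_\alpha\cdot y)\} = H_\alpha^- \cap D$.

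For the inclusion $\mathcal{D}_x \subseteq K$, I would simply restrict the defining condition of $\mathcal{D}_x$ to the generators $w = s_\alpha$ and use the half-space description above, giving $\mathcal{D}_x \subseteq \bigcap_{\alpha \in \Delta}(H_\alpha^- \cap D) = K$. For the reverse inclusion $K \subseteq \mathcal{D}_x$, I would show by induction on the word length $|w|$ that for every $y \in K$ and every $w \neq \id$ one has $d_D(y,x) < d_D(y, w\cdot x)$. The case $|w|=1$ is the half-space statement. For $|w|>1$, choose $\alpha$ with $|s_\alpha w| = |w|-1$ and write $w = s_\alpha w'$. The key point is that $w\cdot x$ lies strictly on the far side of $H_\alpha$, i.e.\ $B(\alpha, w\cdot x) > 0$: since $|s_\alpha w'| > |w'|$, Remark \ref{bb} gives that the nonzero root $w'^{-1}(\alpha)$ has nonnegative coordinates, so $w^{-1}(\alpha) = -w'^{-1}(\alpha)$ has nonpositive, not-all-zero coordinates, and pairing it with $x \in K$ (where $B(\beta,x)<0$ for every $\beta$) yields $B(\alpha, w\cdot x) = B(w^{-1}(\alpha), x)/|w(x)|_1 > 0$, the denominator being positive by Proposition \ref{eigen}(ii). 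Because $H_\alpha$ is the perpendicular bisector of $\{w\cdot x, s_\alpha(w\cdot x)\} = \{w\cdot x, w'\cdot x\}$ and both $y$ and $w'\cdot x$ lie strictly in $H_\alpha^-$ while $w\cdot x$ lies strictly in $H_\alpha^+$, reflection across $H_\alpha$ strictly decreases distance to $y$, so $d_D(y, w'\cdot x) < d_D(y, w\cdot x)$. Applying the inductive hypothesis to $w'$ (which is $\neq \id$ since $|w'| = |w|-1 \geq 1$) gives $d_D(y,x) < d_D(y, w'\cdot x)$, and chaining the two inequalities closes the induction. Combining both inclusions yields $K = \mathcal{D}_x$.

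The hard part is the clean identification of each $H_\alpha \cap D$ with the perpendicular bisector of $\{x, s_\alpha\cdot x\}$ together with the strict monotonicity of distance under reflection to the same side; this is precisely where the CAT(0)/hyperbolic geometry of the ellipsoid (Egloff's theorem and the straightness property) is indispensable, and where Lemma \ref{majiwari} secures the strictness by ensuring that the relevant points and segments avoid the walls. By comparison, the remaining bookkeeping is routine: that normalization does not alter the sign of $B(\alpha, w(x))$ is handled by Proposition \ref{eigen}(ii), and the reduction in word length exploited in the induction is exactly the mechanism underlying Remark \ref{bb} and Proposition \ref{eigen}(i).
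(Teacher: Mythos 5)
Your proof is correct, and while your first inclusion (Dirichlet region $\subseteq K$) is essentially the paper's argument --- if $B(\alpha,y)=0$ then $s_\alpha$ fixes $y$ and the strict inequality fails, and if $B(\alpha,y)>0$ one cuts the segment $[x,y]$ at its crossing with $H_\alpha$ and reflects --- your second inclusion takes a genuinely different route. The paper argues by contradiction: it takes $w \in W\setminus\{\id\}$ attaining $\min_{w \neq \id} d(y,w\cdot x)$, notes $w\cdot x \notin K$ by Lemma \ref{kihon2}, and reflects across a wall separating $y$ from $w\cdot x$ to strictly shorten the distance, contradicting minimality. You instead induct on $|w|$: writing $w=s_\alpha w'$ reduced, you show $B(\alpha, w\cdot x)>0$ via Remark \ref{bb} (this is exactly the computation in the proof of Proposition \ref{eigen}(i), which you could cite rather than re-derive), so that $w\cdot x$ lies strictly on the far side of $H_\alpha$ while $y$ and $w'\cdot x = s_\alpha\cdot(w\cdot x)$ lie strictly on the near side, whence the crossing-plus-unique-geodesic argument gives $d(y,w'\cdot x) < d(y,w\cdot x)$, and the inductive hypothesis closes the loop. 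Your route buys something concrete: the paper's appeal to an attained minimum over the infinite orbit is delicate at this stage, since properness of the action (hence discreteness of the orbit) is established only \emph{after} this lemma via the fundamental region, whereas your induction needs no minimizer at all. Two cosmetic caveats: the identification of $H_\alpha \cap D$ with the full perpendicular bisector of $\{x, s_\alpha\cdot x\}$ is more than you need and is your least rigorous step as written --- every use of it reduces to the trichotomy that the sign of $B(\alpha,\cdot)$ determines the comparison of $d(\cdot,x)$ with $d(\cdot,s_\alpha\cdot x)$, provable directly by the crossing and triangle-inequality argument, with strictness coming from convexity of the half-spaces and uniqueness of geodesics; and Lemma \ref{majiwari} is misattributed in your closing remark, since it concerns segments from points of $K$ to boundary points and plays no role in the strictness here.
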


\begin{proof}
	We assume that there exists a point $y$ in the Dirichlet region at $x$ such that
	$y \not\in K$.
	Then by the definition of $K$ we have $\alpha \in \Delta$ satisfying $B(\alpha,y) \ge 0$.
	If $B(\alpha,y) = 0$ then $s_\alpha(y) = y$ and hence 
	$d(x,y) = d(x,s_\alpha \cdot y)$ which is a contradiction.
	For the other case $B(\alpha,y) > 0$, then the Euclidean segment $[o,x]$ joining $x$ and $y$
	intersects with $H_\alpha$.
	Let $z$ be the intersection point.
	Since $z$ fixed by $s_\alpha$, $d(s_\alpha \cdot y,z) 
	= d(s_\alpha \cdot y, s_\alpha \cdot z ) = d(o,z)$.
	Then we have $d(x,y) = d(x,z) + d(z,y) = d(x,z) + d(z, s_\alpha \cdot y)$,
	hence $d(x,y) \ge d(x,s_\alpha \cdot y)$ by the triangle inequality.
	This contradicts to the hypothesis that $y$ is in the Dirichlet region at $x$.
	
	For the inverse, assume that $y \in K$ is not in the Dirichlet region at $x$.
	By Lemma \ref{kihon2} there exists an element $w \in W \setminus\{\id\}$ that attains 
	$\min_{w \in W \setminus \{\id\}} d(y,w\cdot x)$ and satisfies $w \cdot x \not\in K$.
	Consequently there exists $\alpha \in \Delta$ such that the Euclidean segment $[w \cdot x,y]$ 
	joining $w \cdot x$ and $y$ intersects with $H_\alpha$.
	The intersection point $z$ is fixed by $s_\alpha$ hence $d(s_\alpha \cdot x,z) = d(x,z)$.
	The uniqueness of the geodesic between $y$ and $(s_\alpha w) \cdot x$ gives 
	$d(y,(s_\alpha w) \cdot x) < d(y,z)+d(z,(s_\alpha w) \cdot x) = d(y,w\cdot x)$.
	This contradicts to the minimality of $d(y,w\cdot x)$.
\end{proof}

Lemma \ref{kihon} shows also that $K$ is connected.
In fact, assume that $K$ has more than two components.
Then we can decompose $K$ into $K_1 \sqcup K_2$ and assume that $o \in K_1$.
Take $v \in K_2$ and consider the geodesic $\gamma$ from $o$ to $v$.
Then $\gamma$ should pass through at least one hyperplane $H_\alpha$.
Let $u$ be an intersection point.
Since $u \in H_\alpha$, we have $w \cdot u = u$.
Now we see that 
$d(o, s_\alpha \cdot v) < d(o, u) + d(s_\alpha \cdot u, s_\alpha \cdot v) 
= d(o,u) + d(u,v) = d(o,v)$.
This contradicts to Lemma \ref{kihon}.
  
\begin{prop}
	$K$ is a fundamental region for the normalized action. 
\end{prop}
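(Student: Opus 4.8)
The definition of a fundamental region (Definition \ref{kihonryoiki}) asks for two things: that $w\cdot K \cap K = \emptyset$ for every $w \in W\setminus\{\id\}$, and that $\overline{W\cdot K} = D$. The first is precisely Lemma \ref{kihon2}, so the entire task is the covering property $\overline{W\cdot K} = D$. The plan is to reduce this to a purely combinatorial straightening statement: every $x \in D$ can be carried into the closed chamber $\overline K = \{v \in D : B(\alpha,v)\le 0 \text{ for all }\alpha\in\Delta\}$ by some $w \in W$. Indeed, once $D = \bigcup_{w\in W} w\cdot\overline K$ is known, then since the normalized action is by homeomorphisms (Proposition \ref{isom}) we have $w\cdot\overline K = \overline{w\cdot K}\subset\overline{W\cdot K}$ for each $w$, and hence $\overline{W\cdot K} = D$.

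To straighten I would attach to each $x \in D$ the count
$$
	n(x) := \#\{\beta \in \Phi^+ : B(x,\beta) > 0\},
$$
where $\Phi^+$ is the set of positive roots, so that $B(o,\beta)<0$ for all $\beta\in\Phi^+$ because $o \in K$. Two ingredients are needed: (1) $n(x) < \infty$ for every $x\in D$; and (2) if $x\notin\overline K$, i.e. some simple $\alpha$ satisfies $B(x,\alpha)>0$, then the normalized reflection $s_\alpha$ drops the count by exactly one, $n(s_\alpha\cdot x) = n(x)-1$. For (2) I would invoke the classical fact that a simple reflection permutes $\Phi^+\setminus\{\alpha\}$ and sends $\alpha$ to $-\alpha$; combined with $B(s_\alpha\cdot x,\beta)=B(x,s_\alpha\beta)$ (reflections preserve $B$, as in the proof of Proposition \ref{isom}) and $B(x,\alpha)>0$, reindexing the surviving roots gives the drop. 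Since reflections preserve $D$, starting from any $x$ and repeatedly reflecting across a separating simple wall produces, after $n(x)$ steps, a point $y = w\cdot x$ with $n(y)=0$, i.e. $B(\alpha,y)\le 0$ for all $\alpha\in\Delta$, whence $y \in \overline K$ and $x\in w^{-1}\cdot\overline K$. This yields the covering.

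The main obstacle is the finiteness (1), i.e. local finiteness of the wall arrangement inside $D$, and this is where the geometry of $B$ is genuinely used. For a fixed interior point $x\in D\subset Q_-^+$ one has $q(x)<0$, while every boundary point $\xi\in \widehat{Q^+}$ has $q(\xi)=0$; the signature $(n-1,1)$ forces $B(x,\xi)<0$ for such co-oriented timelike/lightlike pairs. Since $B(x,\beta)=|\beta|_1\,B(x,\widehat\beta)$ with $|\beta|_1>0$ for $\beta\in\Phi^+$, the sign of $B(x,\beta)$ equals that of $B(x,\widehat\beta)$. Now $\widehat{\Phi}$ accumulates only on $E\subset\widehat Q$, so all but finitely many $\widehat\beta$ lie in a neighborhood of $E$ on which $B(x,\cdot)<0$ by continuity; hence $B(x,\beta)>0$ for only finitely many $\beta$, giving $n(x)<\infty$. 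Once this step is secured, the remainder is the standard reflection-group bookkeeping described above.
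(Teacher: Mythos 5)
Your proof is correct, but it takes a genuinely different route from the paper's. The paper disposes of the covering property in two lines by leaning on Lemma \ref{kihon}: given $y \in D$, it chooses $w\cdot o$ nearest to $y$ in the orbit of $o$, concludes $w^{-1}\cdot y \in \overline{K}$ because $K$ is the Dirichlet region at $o$, and handles disjointness by Lemma \ref{kihon2} exactly as you do. You instead run the classical Tits--Vinberg wall-crossing induction on $n(x)=\#\{\beta\in\Phi^{+}\ \vert\ B(x,\beta)>0\}$, whose only nonstandard ingredient is the finiteness of $n(x)$ for $x\in D$; your proof of that -- normalized roots accumulate only on $E\subset\widehat{Q}$ (the same fact from \cite{hlr} the paper uses around Proposition \ref{icchi}), while $B(x,\cdot)<0$ on the compact set $\widehat{Q}$ for timelike positive $x$ by a strict Cauchy--Schwarz estimate in signature $(n-1,1)$ -- is sound, using injectivity of normalization on $\Phi^{+}$ (all roots satisfy $q(\beta)=1$). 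What each approach buys: the paper's is much shorter given its lemmas, but its nearest-point selection implicitly presupposes that the distance from $y$ to the orbit $W\cdot o$ is attained, a discreteness point that is delicate because properness of the action is only deduced \emph{after} the fundamental region is in hand; your induction needs no such selection and yields the stronger conclusion $D=\bigcup_{w\in W}w\cdot\overline{K}$, i.e.\ covering by closed chambers rather than mere density of $W\cdot K$. Three small points you should still pin down: first, $|s_\alpha(x)|_1>0$ for every $x\in D$ (reflections preserve the nappe $Q_-^{+}$), so the normalized reflection is defined and your sign computations survive the positive rescaling; second, the identification $\overline{K}=\{v\in D\ \vert\ B(\alpha,v)\le 0\ \text{for all}\ \alpha\in\Delta\}$ requires the nonemptiness of $K$ (Lemma \ref{katachi}) together with convexity of $\PP$ and $D$; third, the fact that $s_\alpha$ permutes $\Phi^{+}\setminus\{\alpha\}$ must be quoted for the generalized geometric representation allowing $B(\alpha,\beta)\le-1$, as in \cite{hlr}, not only for the classical one -- it does hold there.
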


\begin{proof}
	Take $y \in D$ arbitrary.		
	Let $w \cdot o$ be the nearest orbit of $o$ from $y$.
	Then we see that $w^{-1} \cdot y \in \overline{K}$ by Lemma \ref{kihon}. 
	The second assertion of the definition of the fundamental region
	is Lemma \ref{kihon2}.
\end{proof}

The following corollary is originally proved by Floyd \cite[Lemma in p.213]{Floyd} for 
geometrically finite Kleinnian groups without parabolic elements.
Here we assume the Coxeter groups $W$ acts cocompactly,
i.e., the quotient space of $D$ by the normalized action is compact.
In that case, polytope $\overline{K}$ is contained in $D$.

\begin{cor}\label{qi}
	Let $o$ be the normalized eigenvector for the negative eigenvalue of $B$.
	If the fundamental region $K$ is bounded, 
	then there are constants $k,k' > 0$ so that 
	$k|w| \le d(w\cdot o, o) \le k'|w|$ for all $w \in W$.
\end{cor}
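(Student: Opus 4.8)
The plan is to establish the two inequalities separately: the upper bound $d(w\cdot o,o)\le k'|w|$ is elementary and needs neither boundedness of $K$ nor properness, whereas the lower bound is the substance of the corollary and follows the Milnor--\v{S}varc scheme adapted to the tiling of $D$ by the translates $\{w\cdot\overline{K}\}_{w\in W}$. For the upper bound I would fix a reduced expression $w=s_{\alpha_1}\cdots s_{\alpha_m}$ with $m=|w|$ and set $k':=\max_{\alpha\in\Delta}d(o,s_\alpha\cdot o)<\infty$. Writing $u_i=s_{\alpha_1}\cdots s_{\alpha_{i-1}}$ and using that every $u_i$ acts by isometry (Proposition \ref{isom}), the triangle inequality yields
\[
d(w\cdot o,o)\le\sum_{i=1}^m d(u_is_{\alpha_i}\cdot o,u_i\cdot o)=\sum_{i=1}^m d(s_{\alpha_i}\cdot o,o)\le k'm=k'|w|.
\]

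For the lower bound I would first record that boundedness of $K$ makes $\overline{K}\subset D$ compact, so $\rho:=\diam(\overline{K})$ is finite, and that $\bigcup_{w\in W}w\cdot\overline{K}=\overline{W\cdot K}=D$. Given $w\neq\id$, let $\gamma$ be the geodesic from $o$ to $w\cdot o$ of length $L:=d(o,w\cdot o)$, and choose sample points $o=x_0,x_1,\ldots,x_m=w\cdot o$ on $\gamma$ with $d(x_{i-1},x_i)\le 1$ and $m\le L+1$. For each $i$ pick $w_i\in W$ with $x_i\in w_i\cdot\overline{K}$, taking $w_0=\id$ and $w_m=w$. Since both $x_i$ and $w_i\cdot o$ lie in $w_i\cdot\overline{K}$, we get $d(w_i\cdot o,x_i)\le\rho$, and hence
\[
d(w_{i-1}\cdot o,w_i\cdot o)\le d(w_{i-1}\cdot o,x_{i-1})+d(x_{i-1},x_i)+d(x_i,w_i\cdot o)\le 2\rho+1.
\]

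The decisive move is to convert this metric estimate into a combinatorial one. Because the normalized action is proper, the closed ball $\overline{B}(o,2\rho+1)$ is compact and meets only finitely many orbit points, so the set $\{v\in W:d(o,v\cdot o)\le 2\rho+1\}$ is finite and $C:=\max\{|v|:d(o,v\cdot o)\le 2\rho+1\}$ is finite. Putting $v_i=w_{i-1}^{-1}w_i$ and noting $d(o,v_i\cdot o)=d(w_{i-1}\cdot o,w_i\cdot o)\le 2\rho+1$ forces $|v_i|\le C$, I obtain
\[
|w|=|w_0^{-1}w_m|\le\sum_{i=1}^m|v_i|\le Cm\le C(L+1)=C\,d(o,w\cdot o)+C.
\]
To pass from this affine bound to the stated linear one I would absorb the additive constant: for $w\neq\id$ one has $d(o,w\cdot o)\ge d_{\min}:=\min_{v\neq\id}d(o,v\cdot o)>0$ (the orbit of $o$ is discrete and, since $o\in K$ while $w\cdot K\cap K=\emptyset$ by Lemma \ref{kihon2}, $o$ has trivial stabilizer), whence $C\le (C/d_{\min})\,d(o,w\cdot o)$ and therefore $|w|\le C(1+1/d_{\min})\,d(o,w\cdot o)$, giving $k=\left(C(1+1/d_{\min})\right)^{-1}$.

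I expect the main obstacle to be precisely the passage from $d(w_{i-1}\cdot o,w_i\cdot o)\le 2\rho+1$ to the uniform word-length bound $|v_i|\le C$: this is where properness of the normalized action (established earlier in this section) is indispensable, and it tacitly relies on the triviality of the stabilizer of $o$ so that distinct orbit points correspond to distinct group elements. The remaining ingredients are routine: verifying that the tiling is locally finite so that $\bigcup_w w\cdot\overline{K}=D$, and checking $d_{\min}>0$, both of which follow immediately from properness together with the fact that $K$ is a fundamental region.
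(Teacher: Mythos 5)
Your argument is correct, and in fact the paper itself gives no proof of this corollary at all: it is stated with a citation to Floyd's lemma (p.~213 of \cite{Floyd}), whose proof for geometrically finite Kleinian groups without parabolics is exactly the \v{S}varc--Milnor scheme you carry out. So your proposal fills a gap the paper leaves to the literature, and it does so with the standard (and the cited) approach: elementary triangle-inequality upper bound, then sampling a geodesic at unit scale, snapping sample points to tiles $w_i\cdot\overline{K}$, and converting the uniform metric bound $d(w_{i-1}\cdot o,w_i\cdot o)\le 2\rho+1$ into a uniform word-length bound via properness. Two small points of hygiene against the paper's setup. First, the hypothesis ``$K$ bounded'' should be read as $\overline{K}\subset D$ (equivalently, $d_D$-bounded --- every $K$ is Euclidean-bounded since $D$ is), which is what the paper asserts just before the corollary in the cocompact case; this is what makes $\rho=\diam_{d_D}(\overline{K})$ finite. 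Second, you do not need to argue local finiteness of the tiling to get $\bigcup_{w}w\cdot\overline{K}=D$: this is literally the content of the paper's proposition that $K$ is a fundamental region, whose proof takes $y\in D$, picks the nearest orbit point $w\cdot o$, and concludes $w^{-1}\cdot y\in\overline{K}$ from Lemma \ref{kihon}; you can cite that directly. Your handling of the additive constant --- using Lemma \ref{kihon2} to see that $o$ has trivial stabilizer, so $d_{\min}=\min_{v\neq\id}d(o,v\cdot o)>0$ by discreteness of the orbit --- is also the right way to get the purely linear bound $k|w|\le d(o,w\cdot o)$ as stated, rather than a quasi-isometry inequality with additive error.
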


\begin{dfn}\label{seq}
	Let $(W,S)$ be a Coxeter system.
	\begin{itemize}
	\item
	We call a sequence $\{w_k\}_k$ in $W$ a {\it short sequence}
	if for each $n \in \N$ there exists $s \in S$ such that 
	$w_{k+1} = sw_k$ and $|w_k| = k$.
	\item
	For a sequence $\{w_k\}_k$ in $W$, a path in $V_1$
	is a {\it sequence path} for $\{w_k\}_k$ if the path is given by 
	connecting Euclidean segments $[w_k \cdot o,w_{k+1} \cdot o]$ for all $k \in \N$.
	\end{itemize}
\end{dfn}

\begin{rem}
	A {\it reflection} in $W$ is an element of the form $w s w^{-1}$ for $s \in S$ and $w \in W$.
	We see that $w \cdot H_\alpha = H_{w \cdot \widehat{\alpha}}$.
	We remark that each reflection $w s_\alpha w^{-1}$ corresponds to 
	the normalized $B$-reflection with respect to $H_{w \cdot \widehat{\alpha}}$. 
	We say that the normalized action of $w s_\alpha w^{-1}$ 
	to be the reflection for $w \cdot H_\alpha$.
\end{rem}

\begin{prop}\label{shortest}
	Suppose that $W$ acts on $D$ cocompactly.
	For any $\xi \in \Lambda(W)$ there exists a short sequence $\{w_k\}_k$
	so that $w_k \cdot o$ converges to $\xi$.
	Furthermore the sequence path for $\{w_k\}_k$ lies in $c$-neighborhood of a segment 
	$[o,\xi]$ connecting $o$ and $\xi$ for some $c > 0$
	with respect to the Hilbert metric.
\end{prop}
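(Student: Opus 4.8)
The plan is to build the short sequence by a diagonal limit of reduced words for elements whose orbit approaches $\xi$, and then to pin down its geometry using the quasi-isometry of Corollary \ref{qi} together with the Gromov hyperbolicity and the straightness of geodesics of $(D,d_D)$. First I would use the definition of the limit set to fix a sequence $\{g_m\}_m \subset W$ with $g_m \cdot o \to \xi$ in the Euclidean topology of $\overline{D}$. Since $\xi \in \partial D$ and $d(o,x) \to \infty$ as $x \to \partial D$, we get $d(o, g_m \cdot o) \to \infty$, whence $|g_m| \to \infty$ by Corollary \ref{qi}. Choosing a reduced expression $g_m = s^m_{N_m} \cdots s^m_1$ with $N_m = |g_m|$ and using that every subword of a reduced word in a Coxeter group is reduced, the partial products $w^m_0 = \id$, $w^m_k = s^m_k \cdots s^m_1$ satisfy $|w^m_k| = k$ and $w^m_{k+1} = s^m_{k+1} w^m_k$; thus each $(w^m_0,\dots,w^m_{N_m})$ is a finite short sequence ending at $g_m$.

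Next I extract the infinite short sequence. As $S$ is finite, a diagonal argument produces nested subsequences of $\{m\}$ on which, for each fixed $k$, the generator $s^m_k$ is eventually a fixed element $s_k$. Setting $w_k = s_k \cdots s_1$, we have $w_k = w^m_k$ for all large $m$, so $|w_k| = k$ and $w_{k+1} = s_{k+1} w_k$; hence $\{w_k\}_k$ is a short sequence. By construction, for every $k$ the element $w_k$ is an initial segment of the Cayley-graph geodesic from $\id$ to $g_m$ for infinitely many $m$.

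The geometric core is to show that this abstractly chosen ray lands at $\xi$. By Corollary \ref{qi} the orbit map $w \mapsto w \cdot o$ is a quasi-isometric embedding, and the word metric restricted to any short sequence equals $|k-j|$; since $D$ is convex and $d(o, s\cdot o)$ is bounded for $s \in S$, the sequence path $P_m$ of $g_m$ is a quasi-geodesic from $o$ to $g_m \cdot o$ with constants independent of $m$. By the stability of quasi-geodesics (Morse lemma) in the Gromov hyperbolic space $(D,d_D)$, $P_m$ lies in a uniform $c$-neighborhood of the geodesic $[o, g_m \cdot o]$, which by the straightness property is the Euclidean segment; as $g_m \cdot o \to \xi$ these segments converge to $[o,\xi]$. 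Each vertex $w_k \cdot o$ lies on $P_m$ for infinitely many $m$, so it stays within Hilbert-distance $c$ of $[o, g_m\cdot o]$; letting $m\to\infty$ and using properness (so that the $c$-ball about $w_k\cdot o$ is compact) gives that $w_k \cdot o$ lies within $c$ of $[o,\xi]$. Absorbing the bounded lengths of the connecting segments into the constant yields the ``Furthermore'' assertion. Finally $d(o, w_k\cdot o) \ge \lambda k \to \infty$, so $w_k\cdot o$ leaves every compact set while remaining within $c$ of $[o,\xi]$; the only way to do so along the strictly convex segment $[o,\xi]$ is to approach $\xi$, and since a point at bounded Hilbert distance from a point tending Euclideanly to $\xi$ also tends to $\xi$ (strict convexity of the ellipsoid), we conclude $w_k \cdot o \to \xi$.

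The step I expect to be the main obstacle is exactly this last one: the diagonal construction only guarantees that $\{w_k\}$ is a geodesic ray in the Cayley graph, with no direct control over where its orbit accumulates, so one must exclude drift to a boundary point other than $\xi$. Gromov hyperbolicity supplies precisely this control through the Morse lemma, forcing the sequence path uniformly close to the segments $[o, g_m\cdot o]$, whose Euclidean limit is $[o,\xi]$. The remaining ingredients---subword-closedness of reduced words, the bound $d(o, s\cdot o)\le \lambda'$ for $s\in S$, convexity of $D$, and continuity of $d$ on compacta---are routine.
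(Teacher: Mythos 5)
Your proposal is correct in substance but takes a genuinely different route from the paper's. The paper argues in the opposite direction: rather than extracting a Cayley-graph ray from approximating group elements and then locating its orbit, it starts from the Hilbert geodesic $[o,\xi]$ itself (a Euclidean segment, by the straightness property) and reads the short sequence off the walls $w\cdot H_\alpha$ that this segment crosses. Lemma \ref{majiwari} gives transversality, straightness of Euclidean lines gives that each wall is crossed at most once (after an $\epsilon$-perturbation to a quasi-geodesic when several walls meet the ray at a single point), and the ``no reflection repeats'' criterion (\cite[Corollary 3.2.7]{davis}) shows the resulting gallery word is reduced; the $c$-neighborhood statement then holds by construction with the explicit constant $c = \diam(K)$, and convergence to $\xi$ is automatic because the chambers follow the ray. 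You replace this Coxeter-specific wall machinery with soft hyperbolic geometry: a K\"onig/diagonal extraction from reduced expressions of approximants $g_m$, uniform quasi-geodesity of the sequence paths via Corollary \ref{qi}, and the Morse lemma in the Gromov hyperbolic space $(D,d_D)$ to pin the extracted ray to $[o,\xi]$, finishing with what is exactly Lemma \ref{limitdef} of the paper (your ``strict convexity'' step). The paper's route buys constructiveness and the sharp constant $\diam(K)$ without invoking quasi-geodesic stability; yours buys generality---it would work essentially verbatim for any group acting properly cocompactly by isometries on a proper geodesic hyperbolic space---at the cost of a non-explicit Morse constant. Two small repairs are needed: reducedness of your partial products uses that \emph{factors} (prefixes and suffixes) of reduced words are reduced, not arbitrary subwords, which in Coxeter theory usually means subsequences and for which the claim is false; and you must build the partial products on the side compatible with the left-invariant word metric, i.e.\ $w_{k+1} = w_k s$, since $d(w_j\cdot o, w_k\cdot o) = d(o,(w_j^{-1}w_k)\cdot o)$ and only for factors on that side is $|w_j^{-1}w_k| = k-j$. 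With your convention $w_{k+1}=s w_k$ (which, to be fair, is also what the paper's Definition \ref{seq} literally says, in tension with its own gallery construction), consecutive orbit points $w_k\cdot o$ and $s w_k\cdot o$ need not be uniformly close, because $w_k^{-1} s w_k$ is a reflection of unbounded word length, so the sequence path would not be a uniform quasi-geodesic. After reading the reduced words from the other end, every step of your outline goes through.
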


\begin{proof}
	A Euclidean segment $\gamma = [o,\xi]$ is a geodesic ray with respect to the Hilbert metric.
	The segment $\gamma$ intersects with infinitely many hyperplanes 
	$\{w_k \cdot H_{\alpha_k}\}$ 
	($\alpha_k \in \Delta,\ w_k \in W$ for $k \in \N$) transversely
	since it is not contained any hyperplane $w_k \cdot H_{\alpha_k}$
	by Lemma \ref{majiwari}.
	We notice that $\gamma$ pass through each $\{w_k \cdot H_{\alpha_k}\}$ only once
	because the Euclidean straight line cannot pass through any hyperplane twice.
	If $\gamma$ intersects with some hyperplanes at the same point $x$, then
	by perturbing subpath of $\gamma$ in the $\epsilon$ ball $B(x,\epsilon)$ centered at $x$
	we have a quasi geodesic ray $\gamma'$ toward $\xi$ 
	which is in $\epsilon$ neighborhood of $\gamma$ for sufficiently small $\epsilon >0$.
	Then $\gamma'$ intersects with the hyperplanes passing through $x$ only once.
	In particular, $\gamma'$ intersects with distinct hyperplanes at distinct points.
	
	We renumber the hyperplanes $\{w_k \cdot H_{\alpha_k}\}$ with which $\gamma'$ intersects
	so that 
	if $\gamma'$ intersects with some hyperplanes $w_k \cdot H_{\alpha_k}$, 
	$w_{k'} \cdot H_{\alpha_{k'}}$ at $\gamma'(t)$, $\gamma'(t')$ respectively for $t<t'$,
	then we have $k<k'$.
	Thus we have a sequence $\{w_k\}_k$.
	Considering the $B$-reflection $s_{\alpha_i}$ with respect to $H_{\alpha_i}$ 
	for each $i \in \N$,
	we see that $w_k = s_{\alpha_k}s_{\alpha_{k-1}}\cdots s_{\alpha_1}$ for each $k \in \N$.
	Let $w_0 = id$ and $r_i = w_{i-1}s_{\alpha_i}w^{-1}_{i-1}$ for $i \in \N$.
	Then $w_k = r_k\cdots r_1$.
	Now each $r_i$ is the reflection for $w_i \cdot H_{\alpha_i}$ for all $i$.
	Since $\gamma'$ meets each hyperplane $w_i \cdot H_{\alpha_i}$ ($i \in \N$) only once,
	in the sequence $\{r_k,\ldots,r_1\}$ no reflection occurs more than once
	for all $k \in \N$.
	This shows that the word $s_{\alpha_k}s_{\alpha_{k-1}}\cdots s_{\alpha_1}$ 
	is a geodesic for $w_k$ (\cite[Corollary 3.2.7]{davis}).
	Therefore, the sequence $\{w_k\}_k$ is a short sequence.
		
	Furthermore by the construction, we see that 
	the sequence path for $\{w_k\}_k$ is included in $c$-neighborhood of $\gamma$,
	where $c$ equals to the diameter of $K$.
\end{proof}

\subsection{Three cases}
We consider the normalized action 
by dividing it into the following three cases:
cocompact, convex cocompact, with cusps.
We recall that $\conv(\widehat{\Delta})$ is a simplex. 
It can happen three distinct situations due to the bilinear form $B$;
\begin{itemize}
	\item[(i)] the region $D \cup \partial D$ is included in $\inte(\conv(\widehat{\Delta}))$;
	\item[(ii)] there exist some $n'$ ($<n$) dimensional faces of $\conv(\widehat{\Delta})$ 
	which are tangent to the boundary $\partial D$;
	\item[(iii)] 
	$D \cup \partial D \not\subset \inte(\conv(\widehat{\Delta}))$ and 
	no faces of $\conv(\widehat{\Delta})$ tangent to $\partial D$.
\end{itemize}
We argue the cases (i) and (iii) simultaneously.
For the case (ii), we can not apply the same argument as (i) and (iii).
The most general case will be discussed in Section 4.2. 

\begin{rem}\label{bubungyoretsu}
	By \cite[Corollary 2.2]{Heamers}, we see that a Coxeter subsystem $(W',S')$ 
	satisfying $S' \subset S$ is either of type $(|S'|-1,1)$ or $(|S'|-1,0)$ or positive definite.
	Let $B'$ be the bilinear form corresponding to $(W',S')$.
	If $B'$ has the signature $(|S'|-1,1)$ (resp. $(|S'|-1,0)$),
	then by the same argument as Lemma \ref{heimen}, we have an eigenvector 
	$o' \in \mathrm{span}(\Delta')$ of the negative 
	eigenvector (resp. $0$ eigenvalue) such that all coordinates of $o'$ for $\Delta'$  
	are positive where $\mathrm{span(\Delta')}$ 
	denotes the subspace spanned by $\Delta'$.
	This shows that $Q' = \{v \in \mathrm{span(\Delta')}\ \vert\ B'(v,v) = 0\}$ should intersect
	with $\conv(\widehat{\Delta'})$. 
	Since the Gram matrixof $B'$ is a principal submatrix of the Gram matrixof $B$,
	we see that $\partial D \cap \conv(\widehat{\Delta'}) = Q' \cap \conv(\widehat{\Delta'})$.
	Thus we have the followings:
	\begin{itemize}
	\item[(1)] $B'$ has the signature $(|S'|-1,1)$ if and only if 
	$D \cap \conv(\Delta') \not= \emptyset$; 
	\item[(2)] $B'$ has the signature $(|S'|-1,0)$ if and only if 
	$\partial D \cap \conv(\Delta') = Q' \cap \conv(\widehat{\Delta'})$, which is a singleton;
	\item[(3)] $B'$ is positive definite if and only if
	$(D \cup \partial D) \cap \conv(\widehat{\Delta'}) = \emptyset$.
	\end{itemize}
	If $B'$ has the signature $(|S'|-1,1)$ then $H_\alpha$ for $\alpha \in \Delta'$
	intersects with $D \cap \conv(\Delta')$.
	In fact if not, then $D \cap \conv(\widehat{\Delta'})$ is not preserved by 
	$s_\alpha$ for $\alpha \in \Delta'$.
	Moreover, by the compactness of $Q$, $Q' \cap V_0 = {\bf 0}$ 
	for any Coxeter subsystem $(W',S')$. 
\end{rem}

We say a Coxeter system of rank $n$ is {\it affine} if its associating bi-linear form $B$
has the signature $(n-1,0)$. 
Fixing a generating set $S$ we simply say Coxeter group $W$ is affine if the Coxeter system 
$(W,S)$ is affine.
An affine Coxeter group is of infinite order and 
its limit set is a singleton (\cite[Corollary 2.15]{hlr}).
We notice that for any affine Coxeter group
if its rank is more than $2$ then there are no simple roots $\alpha,\beta \in \Delta$
with $B(\alpha,\beta) \le -1$ if $B$ is irreducible. 
In fact if $B(\alpha,\beta) \le -1$ then the subgroup generated by 
$s_\alpha, s_\beta$ is of infinite order hence $E \cap \conv(\{\alpha,\beta\}) \neq \emptyset$.
This implies that $E \subset \conv(\{\alpha,\beta\})$ since $E$ is a singleton.
Hence $B(\alpha,\beta) < -1$ does not occur because if so then 
the limit set of the subgroup generated by $s_\alpha, s_\beta$ consists of two points.
For the case where $B(\alpha,\beta) = -1$, let $x$ be the limit point, i.e, $E =\{x\}$.
Since $x \in \conv(\{\alpha,\beta\})$,
for any $\gamma \in \Delta \setminus \{\alpha,\beta\}$, the $\gamma$-th coordinate
of $x$ equals to $0$.
For the $\alpha$-th coordinate and the $\beta$-th coordinate of $x$ are not $0$.
Since $B$ is irreducible, $B(x,\gamma) \not= 0$ for $\gamma \in \Delta\setminus \{\alpha,\beta\}$.
This shows that $s_\gamma \cdot x \not= x$.
However since $s_\gamma \cdot x$ is in $E$, we have a contradiction.

\begin{rem}\label{combi}
	We remark that for $x \in \partial D$ and $\alpha \in \Delta$ we have 
	$\{x,s_\alpha\cdot x\} = L(\widehat{\alpha},x) \cap \partial D$ 
	where $L(\widehat{\alpha},x)$ is the Euclidean line 
	passing through $\widehat{\alpha}$ and $x$.
	This is because that $s_\alpha\cdot x$ is a linear combination of $\widehat{\alpha}$ and $x$,
	and $\partial D$ is preserved by $s_\alpha$.
\end{rem}

\begin{prop}\label{bunrui1}
	Assume that $(W,S)$ is Coxeter system of type $(n-1,1)$.
	\begin{itemize}
	\item[(a)] 
	The case (i) happens if and only if every Coxeter subgroup of $W$ of rank $n-1$ 
	generated by a subset of $S$ is finite.
	\item[(b)]
	The case (ii) happens if and only if 
	there exists a rank $n'$ ($<n$) affine Coxeter subgroup of $W$ 
	generated by a subset of $S$.
	\item[(c)]
	The case (iii) happens if and only if every Coxeter subgroup of $W$ of rank $n'$ ($<n$)
	generated by a subset of $S$ is of type $(n'-1,1)$ or $(n',0)$.
	\end{itemize}
\end{prop}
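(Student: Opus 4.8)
The plan is to reduce all three equivalences to the dictionary supplied by Remark~\ref{bubungyoretsu}, which converts the signature of a standard parabolic subsystem into the position of the corresponding face of the simplex $\conv(\widehat{\Delta})$ relative to the ellipsoid. First I would record the bijection between subsets $\Delta' \subseteq \Delta$ and faces of the simplex: a subset with $|\Delta'| = n'$ spans the $(n'-1)$-dimensional face $\conv(\widehat{\Delta'})$ and determines the subsystem $(W',S')$ with $S' = \{s_\alpha : \alpha \in \Delta'\}$. By the trichotomy quoted in Remark~\ref{bubungyoretsu}, each such $B'$ is positive definite, affine of type $(n'-1,0)$, or of type $(n'-1,1)$, and items (3), (2), (1) there match these three alternatives with, respectively: the face misses $\overline{D} = D \cup \partial D$; the face is tangent to $\partial D$, meeting it in the single point $Q' \cap \conv(\widehat{\Delta'})$; the face cuts the interior $D$. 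With this dictionary in hand, each of (a), (b), (c) becomes a statement about which positions occur.

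For (b) I would argue directly. By definition case (ii) says that some face of $\conv(\widehat{\Delta})$ is tangent to $\partial D$, and by item (2) of Remark~\ref{bubungyoretsu} a face is tangent precisely when its subsystem is affine. Since a tangency may occur at a face of any dimension, this matches exactly the existence of a rank $n'$ ($<n$) affine subgroup generated by a subset of $S$, giving (b). The point to verify carefully is that tangency and affineness are equivalent in both directions, i.e.\ that an affine subsystem cannot produce a transverse intersection and that a tangency cannot arise from a non-affine subsystem; both follow from the singleton description $\partial D \cap \conv(\widehat{\Delta'}) = Q' \cap \conv(\widehat{\Delta'})$.

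For (a) I would use that the topological boundary of the simplex is the union of its $n$ facets $\conv(\widehat{\Delta \setminus \{\alpha_i\}})$, coming from the subsets of size $n-1$. Since all coordinates of $o$ are positive (so $o \in D \cap \inte(\conv(\widehat{\Delta}))$, as in Lemma~\ref{katachi}) and $\overline{D}$ is connected, the inclusion $\overline{D} \subset \inte(\conv(\widehat{\Delta}))$ holds iff $\overline{D}$ meets no facet; here the degenerate possibility that $\overline{D}$ lies outside the simplex while meeting no facet is excluded because every vertex satisfies $q(\widehat{\alpha_i}) = 1/o_i^2 > 0$ and hence lies outside $\overline{D}$. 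By item (3) a facet misses $\overline{D}$ iff its rank $n-1$ subsystem is positive definite, i.e.\ iff that subgroup is finite. Running over the $n$ facets yields: case (i) holds iff every rank $n-1$ subgroup generated by a subset of $S$ is finite, which is (a).

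Finally I would treat (c) simultaneously with (a), as the paper indicates. By the equivalence established for (b), the absence of a tangent face is the same as the absence of a proper affine subsystem, and the trichotomy of Remark~\ref{bubungyoretsu} then forces every proper rank $n'$ subgroup to be positive definite $(n',0)$ or of type $(n'-1,1)$, which is exactly the condition in (c); thus (c) characterizes the failure of (ii). Since the three geometric situations exhaust all possibilities, the failure of (ii) leaves precisely cases (i) and (iii), which (a) separates according to whether every rank $n-1$ subgroup is finite (case (i)) or some rank $n-1$ subsystem is of type $(n-2,1)$ and so forces a facet to cut $D$ (case (iii)). I expect the main obstacle to be exactly this bookkeeping: establishing that the three geometric alternatives (i), (ii), (iii) are matched bijectively with the signature profiles of the subsystems, and in particular pinning down the tangency correspondence of Remark~\ref{bubungyoretsu} in full --- both directions, and uniformly over faces of all dimensions --- since once that dictionary is secured the remainder of the argument is a translation through it.
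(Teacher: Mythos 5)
Your proposal is correct and follows essentially the same route as the paper: the paper likewise reduces (a)--(c) to the position of the faces $\conv(\widehat{\Delta'})$ relative to $D \cup \partial D$, proving (a) via the classical criterion that $W'$ is finite iff $B'$ is positive definite iff the facet misses $D \cup \partial D$, proving (b) by noting that a tangency point $v$ is fixed by each $s_\alpha$, $\alpha \in \Delta'$ (Remark \ref{combi}), hence is a $0$-eigenvector of $B'$ (and conversely that an affine subsystem's singleton limit set $\widehat{Q'}$ forces tangency), and proving (c) by the same signature trichotomy you quote from Remark \ref{bubungyoretsu}. One substantive remark: as your own argument for (c) exposes, the stated condition there literally characterizes the failure of case (ii) rather than case (iii) --- in case (i) every proper subsystem is positive definite, so ``every rank $n'$ subgroup is of type $(n'-1,1)$ or $(n',0)$'' holds there as well, and the paper's proof of the forward implication of (c) glosses over exactly this point; your explicit separation of (i) from (iii) via (a) (some rank $n-1$ subsystem of type $(n-2,1)$ forcing a facet to cut $D$) is precisely the supplement needed to make the biconditional in (c) accurate, so on this point your write-up is more careful than the paper's.
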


\begin{proof}
	Note that for $\Delta' \subset \Delta$ we can restrict the bi-linear form $B$ to $\Delta'$.
	We denote such a bi-linear form as $B'$,
	namely, the Gram matrixwith respect to $B'$ is a principal submatrix of the 
	Gram matrixwith respect to $B$.
	
	(a)
	Let $W'$ be a Coxeter subgroup of rank $n-1$ and let $B'$ be the bilinear form for $W'$.
	Recall a classical result that $W'$ is finite if and only if $B'$ is positive definite
	(see \cite[Theorem 6.4]{Hum90}).
	This is equivalent to that $\widehat{Q}$ does not intersect with $\conv(\widehat{\Delta'})$. 
	
	(b)
	Assume that there exists an affine rank $n'$ ($<n$) Coxeter subgroup $W'$ of $W$
	generated by a subset $S'$ of $S$ which is minimal.
	Let $\Delta'$ and $B'$ be the subset of $\Delta$ and 
	the bilinear form corresponding to $W'$ respectively.
	As we have mentioned before, 
	the normalized limit set $\widehat{\Lambda(W')}$ is a singleton $\{\xi\}$
	and it equals to $\widehat{Q'} \subset \widehat{Q}$.
	This shows that an $n'$ dimensional face $\conv(\widehat{\Delta'})$ is tangent to $\widehat{Q}$.
		
	For the converse we assume that an $n' (< n)$ dimensional face $\conv(\widehat{\Delta'})$ is
	tangent to $\widehat{Q}$ for some $\Delta' \subset \Delta$.
	We also assume that face is minimal.
	Let $S' \subset S$ and $W'$ be 
	the set of simple $B'$-reflection corresponding to $\Delta'$ and the Coxeter subgroup of $W$
	generated by $S'$ respectively.
	Then for the corresponding bilinear form $B'$ the set 
	$\widehat{Q'}$ consists of one point $v$.
	Then $s_\alpha \cdot v = v$ for any $\alpha \in \Delta'$ by Remark \ref{combi}.
	Therefore $B'(v,\alpha) = 0$ for any $\alpha \in \Delta'$ and hence $v$ is an eigenvector of 
	$0$ eigenvalue of $B'$.
	This means that $B'$ has the signature $(n'-1,0)$ and hence $W'$ is affine.
		
	(c)
	If every infinite rank $n'$ ($<n$) Coxeter subgroup of $W$
	generated by a proper subset $S'$ of $S$ is of type $(n'-1,1)$ or positive definite
	then $\widehat{Q'}$ is either an ellipsoid or empty. 
	Obviously $\widehat{Q'} \subset \partial D$ we have the case (iii).
	If the case (iii) happens, 
	then $\partial D$ should intersect with a face of $\conv(\widehat{\Delta})$.
	Let $\conv(\widehat{\Delta'})$ be such a face and let $B'$ be 
	the bilinear form corresponding to $\Delta'$.
	Then there exists $v \in D \cap \conv(\widehat{\Delta'})$.
	Since $B(v,v) = B'(v,v) <0$,
	we see that $B'$ has the signature $(|\Delta'|-1,1)$.
	For $\Delta'' \subset \Delta$ 
	if $\conv(\widehat{\Delta''})$ does not intersect with $D\cup \partial D$  
	then there are no elements $v \in \conv(\widehat{\Delta''})$ such that $B''(v,v) = 0$ 
	where $B''$ is the bilinear form corresponding to $\Delta''$.
	This is because that $B''$ is a principal submatrix of $B$.
	Thus $B''$ is positive definite.
\end{proof}
	
\begin{rem}\label{core}
	For $v = \sum_{\alpha\in \Delta}v_\alpha \alpha$ we have 
	\[
		q(v) = \sum_{\alpha \in \Delta} v_\alpha B(v,\alpha).
	\]
	From this,
	if there exists $v \in \PP$ such that $q(v) \ge 0$ then $v_\alpha <0$ for some 
	$\alpha \in \Delta$.
\end{rem}

\begin{prop}\label{bunrui}
	For each case, we have followings:
	\begin{itemize}
	\item[(a)] The case $(i)$ $\iff$ $\overline{\PP} = \overline{\PP'} \subset D$;
	\item[(b)] the case $(ii)$ $\iff$ $\overline{\PP'}$ has some
	vertices in $\partial D$;
	\item[(c)] the case $(iii)$ $\iff$ $\overline{\PP}\not= \overline{\PP'}$ 
	and no vertices of $\overline{\PP'}$ belong to $\partial D$.
	\end{itemize}
\end{prop}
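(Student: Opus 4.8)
The three right-hand conditions are mutually exclusive and exhaustive, so I would first reduce the proposition to its three forward implications. Exclusivity is immediate: $(a)$ and $(c)$ differ on whether $\overline{\PP}=\overline{\PP'}$, while $(b)$ asserts a vertex on $\partial D$ and both $(a)$ (which forces $\overline{\PP'}\subset D$) and $(c)$ deny one. For exhaustivity, recall from Remark \ref{core} that any $v\in\PP'$ has all coordinates positive, so $q(v)<0$; hence $\PP'\subset D$ and $\overline{\PP'}\subset\overline{D}$. Thus if $\overline{\PP}=\overline{\PP'}$ then either $\overline{\PP'}\subset D$, which is $(a)$, or $\overline{\PP'}$ meets $\partial D$; since $\partial D$ is strictly convex and $\PP'\subset D$, a contact point cannot lie on a positive-dimensional face of the polytope $\overline{\PP'}$, so it is a vertex, giving $(b)$. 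If instead $\overline{\PP}\neq\overline{\PP'}$, then $(b)$ or $(c)$ holds according to the presence of a boundary vertex. Consequently $\{(i),(ii),(iii)\}$ and $\{(a),(b),(c)\}$ are two partitions, and it suffices to prove $(i)\Rightarrow(a)$, $(ii)\Rightarrow(b)$, $(iii)\Rightarrow(c)$; the reverse implications then follow formally.

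For $(ii)\Rightarrow(b)$ I would exhibit the tangency point as a boundary vertex. By Proposition \ref{bunrui1}(b) there is an affine subsystem $\Delta'\subset\Delta$; let $o'=\sum_{\alpha\in\Delta'}c_\alpha\alpha$ be its radical eigenvector, with $c_\alpha>0$ and $B(o',\beta)=0$ for all $\beta\in\Delta'$ (Remark \ref{bubungyoretsu}), and set $\xi'=\widehat{o'}$, so $q(\xi')=0$ and $\xi'\in\partial D$. For $\gamma\notin\Delta'$ the off-diagonal Gram entries give $B(\xi',\gamma)=\sum_{\alpha\in\Delta'}c_\alpha B(\alpha,\gamma)\le 0$, and the coordinates of $\xi'$ are nonnegative, so $\xi'\in\overline{\PP'}$. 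It is a vertex: a direction $u$ tangent to all active walls satisfies $u\in\mathrm{span}(\Delta')$ (from the faces $P_\gamma$, $\gamma\notin\Delta'$) and $B(u,\beta)=0$ for all $\beta\in\Delta'$, hence $u$ lies in the one-dimensional radical $\R o'$; since $o'\notin V_0$ while $u\in V_0$, we get $u={\bf 0}$.

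For $(iii)\Rightarrow(c)$ I would argue both clauses. First, $\overline{\PP}\neq\overline{\PP'}$: case $(iii)$ forces $\partial D$ to meet the boundary of $\conv(\widehat{\Delta})$, so some facet $\conv(\widehat{\Delta_j})$ is secant, i.e. the rank $n-1$ subsystem $\Delta_j=\Delta\setminus\{\alpha_j\}$ has type $(n-2,1)$. Its timelike eigenvector $o_j\in\mathrm{span}(\Delta_j)$ satisfies $B(o_j,\alpha_i)<0$ for $i\neq j$ and, by irreducibility, $B(o_j,\alpha_j)=\sum_{i\neq j}(o_j)_iB(\alpha_i,\alpha_j)<0$, so $o_j\in\PP$ while its $\alpha_j$-coordinate vanishes; as $\PP$ is open, nearby points with negative $\alpha_j$-coordinate lie in $\PP\setminus\conv(\widehat{\Delta})$, whence $\overline{\PP}\neq\overline{\PP'}$. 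Second, no vertex of $\overline{\PP'}$ lies on $\partial D$: the support $\Delta'$ of such a vertex $\eta$ would give $\eta\in\partial D\cap\mathrm{relint}\,\conv(\widehat{\Delta'})$; by strict convexity a secant (or finite) face cannot carry an isolated vertex of the polytope on $\partial D$, so $\partial D\cap\conv(\widehat{\Delta'})=\{\eta\}$, forcing $\Delta'$ affine by Remark \ref{bubungyoretsu}(2) and contradicting case $(iii)$.

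Finally, for $(i)\Rightarrow(a)$, case $(i)$ means every rank $n-1$ subsystem is finite, so each apex $v_j=\bigcap_{i\neq j}H_{\alpha_i}$ lies in the negative-definite $B$-complement of a positive-definite subspace, giving $q(v_j)<0$ and $v_j\in D$. I would show $\overline{\PP}$ is bounded: a nontrivial recession direction $u$ lies in $V_0$ with $B(u,\alpha_i)\le 0$, so $q(u)>0$ (Remark \ref{zerodake}) and the ray $o+tu\subset\PP$ eventually leaves $\overline{D}$; but at its exit point on $\partial D\subset\inte(\conv(\widehat{\Delta}))$ all coordinates are positive, contradicting Remark \ref{core} just beyond. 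Hence $\overline{\PP}$ is a bounded simplex equal to $\conv\{v_1,\dots,v_n\}\subset D\subset\inte(\conv(\widehat{\Delta}))$, so the simplex constraints are inactive and $\overline{\PP}=\overline{\PP'}\subset D$. The main obstacle throughout is the delicate bookkeeping of which walls are active at a boundary vertex together with the use of strict convexity of $\partial D$ to rule out boundary vertices on secant or finite faces; the positivity of the Gram off-diagonal entries and of the relevant Perron eigenvectors is what makes every required inequality go the right way.
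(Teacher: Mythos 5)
Your global architecture is sound and genuinely different from the paper's: you reduce the proposition to the three forward implications by checking that both $\{(i),(ii),(iii)\}$ and $\{(a),(b),(c)\}$ are partitions, whereas the paper proves each equivalence in both directions, using Remark \ref{core} plus properness of the normalized action for (a), the fixed-point characterization of tangency (Remark \ref{combi}) leading to \eqref{choten} for (b), and deducing (c) from the other two. Your $(i)\Rightarrow(a)$ (apexes lying on the negative-definite $B$-complement of positive-definite spans, boundedness via a recession direction $u\in V_0$ with $q(u)>0$ and Remark \ref{core} just past the exit point) is a nice, complete argument that the paper does not spell out. Your $(ii)\Rightarrow(b)$ is essentially right, but you must take the affine subsystem $\Delta'$ \emph{minimal}, hence irreducible, as the paper does: for a reducible affine subsystem the radical is not one-dimensional and the positivity $c_\alpha>0$ fails, and both facts are used in your vertex argument.

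The genuine gaps are in $(iii)\Rightarrow(c)$. For the first clause you claim $B(o_j,\alpha_i)<0$ for all $i\neq j$, which requires all coordinates of $o_j$ to be positive, i.e. Perron--Frobenius applied to $B_j$ — but a facet of type $(n-2,1)$ can be reducible (e.g. a secant dihedral pair orthogonal to a third simple root), in which case $B(o_j,\alpha_i)=0$ for $\alpha_i$ outside the indefinite component, $o_j\notin\PP$, and the openness/perturbation step collapses; passing to a minimal secant face restores irreducibility but still leaves $B(o',\gamma)=0$ possible for $\gamma$ not adjacent to the face, so this clause needs a genuinely further argument. For the second clause, the justification ``by strict convexity a secant (or finite) face cannot carry an isolated vertex of the polytope on $\partial D$, so $\partial D\cap\conv(\widehat{\Delta'})=\{\eta\}$'' is not a proof: for a secant face $\partial D\cap\conv(\widehat{\Delta'})$ is a sphere (two points when $|\Delta'|=2$), and strict convexity of $D$ — which correctly rules out positive-dimensional faces of $\overline{\PP'}$ touching $\partial D$, as in your exhaustivity paragraph — yields no contradiction with $\eta$ being a vertex supported on a secant face. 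The working mechanism, which is exactly what the paper exploits, is complementary slackness in the identity $q(\eta)=\sum_{\alpha}\eta_\alpha B(\eta,\alpha)$ of Remark \ref{core}: for $\eta\in\overline{\PP'}\cap\partial D$ every term is $\le 0$ and the sum is $0$, so $B(\eta,\alpha)=0$ for every $\alpha$ in the support $\Delta'$; thus $\eta$ is a kernel vector of $B'$, which by interlacing (Remark \ref{bubungyoretsu}) forces $B'$ to be affine and the face $\conv(\widehat{\Delta'})$ tangent to $\partial D$, contradicting case (iii). Replacing your convexity appeal by this short computation closes the second gap; the first still needs repair, for instance via the observation that $\overline{\PP}=\overline{\PP'}$ would force $D=\overline{W\cdot K'}\subset\conv(\widehat{\Delta})$ by Proposition \ref{eigen}(ii), while a secant face puts points of the open set $D$ outside $\conv(\widehat{\Delta})$.
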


\begin{proof}
	(a)
	Remark \ref{core} shows that if (i) then $\PP \subset D$.
	Moreover, if there exists a vertex $v$ of $\overline{\PP}$ such that $q(v) \ge 0$ then 
	$v_\alpha \le 0$ for some $\alpha \in \Delta$ by the definition.
	This implies that $v \in \conv(\widehat{\Delta} \setminus \{\alpha\})$ and hence
	$\partial D$ must intersect with the face $\conv(\widehat{\Delta} \setminus \{\alpha\})$.
	This is a contradiction.
	Thus $\overline{\PP} \subset D \subset \conv(\widehat{\Delta})$ and $\PP = \PP'$.
	
	Conversely, we assume that $\overline{\PP} \subset D$.
	For any $\Delta' \subset \Delta$, let $S'$ be the subset of $S$ corresponding to $\Delta'$.
	Let $B'$ be the bilinear form corresponding to $S'$.
	If $\conv(\widehat{\Delta'}) \cap \partial D \not= \emptyset$
	then the Coxeter subgroup $W'$ generated by $S'$ is infinite. 
	In particular, any Coxeter element in $W'$ has infinite order.
	Moreover for $\alpha \in \Delta'$, $H_\alpha$ should intersect with 
	$D \cap \conv(\widehat{\Delta'})$.
	Let $v$ be a point of $\overline{\PP}$ such that 
	$v \in \bigcap_{\alpha \in \Delta'} H_\alpha$.
	Now $v \in D$ by our assumption.
	Then since $v$ is fixed by any element in $W'$, we have an accumulation point in $D$.
	This contradicts to the properness of the normalized action of $W$ on $D$.

	(b)
	We have a face of $\conv(\widehat{\Delta})$ which is tangent to
	$\partial D = \widehat{Q}$.
	Let $\Delta'$ be the minimal subset of $\Delta$ such that 
	$\conv(\widehat{\Delta'})$ is tangent to $\partial D$ and let $v$ be the point of tangency.
	We set $S' = \{s_\alpha\ \vert\ \alpha \in \Delta'\}$.
	Then for any $\alpha \in \Delta'$, we have $B(v,\alpha) = 0$ since $v = s_\alpha \cdot v$ 
	by Remark \ref{combi}.
	Thus $v \in \bigcap_{\alpha \in \Delta'} H_\alpha$.
	Furthermore since $v \in \conv(\widehat{\Delta'})$ we have $v_{\alpha} =0$ for 
	$\alpha \in \Delta \setminus \Delta'$ if we write 
	$v = \sum_{\alpha \in \Delta} v_{\alpha} \widehat{\alpha}$.
	Consequently
	\begin{align}\label{choten}
		\{v\} = \bigcap_{\alpha \in \Delta'} H_\alpha \cap 
		\bigcap_{\beta \in \Delta \setminus \Delta'} P_\beta.
	\end{align}
	This shows that $v$ is a vertex of $\overline{\PP'}$.
	
	Conversely, we assume that there exists a vertex $v$ of $\overline{\PP'}$ on $\partial D$
	satisfying \eqref{choten} for some $\Delta' \subset \Delta$.
	Then $v \in \mathrm{span}(\Delta')$ where $\mathrm{span}(\Delta')$ is the subspace of $V$
	spanned by $\Delta'$.
	In addition, since $v \in \bigcap_{\alpha \in \Delta'} H_\alpha$, we see that
	$v$ is an eigenvector of the Gram matrixfor $\Delta'$ 
	corresponding to the $0$ eigenvalue.
	Thus $\conv(\widehat{\Delta'})$ is tangent to $\partial D$.

	(c)
	We assume that $\conv(\widehat{\Delta'}) \cap D \not= \emptyset$ for some
	$\Delta' \subset \Delta$.
	Obviously $\PP \neq \PP'$.
	Moreover, by Remark \ref{core} we see that every vertex of $\overline{\PP'}$ belongs
	to $\partial D$ or $D$.
	However if there exists a vertex lying on $\partial D$ then the case (ii) happens 
	by the proof of (b).
	Thus all vertices of $\overline{\PP'}$ are in $D$. 
	The converse is clear by (a) and  (b).
\end{proof}

From Proposition \ref{bunrui} we deduce that the fundamental region $K$ (resp.$K'$) 
is bounded if the case (i) (resp. the case (iii)) happens.
If $\overline{K'}$ is not compact, 
then $\partial D$ must be tangent to some faces of $\conv(\widehat{\Delta})$.
In this case $K'$ has some cusps at points of tangency of $\partial D$.
This happens if and only if (ii).
Because of this we call each cases as follows:
The normalized action of $W$ on $D$ is 
\begin{itemize}
	\item 
	{\it cocompact} if the case (i) happens;
	\item
	{\it with cusps} if the case (ii) happens;
	\item
	{\it convex cocompact} if the case (iii) happens.
\end{itemize}

In the case (ii) the {\it rank} of cusp $v$ is the minimal rank of 
the affine Coxeter subgroup generated by a subset of $S$ 
which fixes $v$.
Note that we can find easily that there exist Coxeter groups corresponding to 
each cases (i), (ii) and (iii).
Thus all the possibilities may happen.

\begin{exa}
	We see that classical hyperbolic Coxeter groups are in the case (i).
	For the case (iii) one of the simplest example is a triangle group
	$W = \langle s_1,s_2,s_3\ \vert\ s_i^2\ (i=1,2,3)\rangle$ with bi-linear form satisfying
	$B(\alpha_i,\alpha_j) < -1$ for $i\neq j$.
	At last it is in the case (ii) that
	$W = \langle s_1,s_2,s_3,s_4\ \vert\ 
	s_i^2,(s_1s_2)^6,(s_1s_3)^3,(s_js_k)^2 (j\neq k \in \{2,3,4\})\rangle$
	with the matrix $(B(\alpha_i,\alpha_j))_{i,j}$ equals to
	\[
		\begin{bmatrix}
		1 & -\frac{\sqrt{3}}{2} & -\frac{1}{2} & T \\
		-\frac{\sqrt{3}}{2} & 1 & 0 & 0 \\
		-\frac{1}{2} & 0 & 1 & 0 \\
		T & 0 & 0 & 1 \\
		\end{bmatrix}
	\]
	where $T < -1$.
	In fact $W$ is with signature $(3,1)$ although a subgroup generated by 
	$\{s_1,s_2,s_3\}$ is with signature $(2,0)$.
\end{exa}


\subsection{The limit set and the set of accumulation points of roots}\label{E}

\begin{dfn}
	For a Coxeter system $(W,S)$ of type $(n-1,1)$, 
	let $o$ be the normalized eigenvector corresponding to the negative eigenvalue
	of the corresponding Gram matrix.
	The {\it limit set} $\Lambda_B(W)$ of $W$ with respect to $B$
	is the set of accumulation points of the orbit of $o$ 
	by the normalized action of $W$ on $D$ in the Euclidean topology. 
	The limit set depends on the Gram matrix$B$.
	If $B$ is understood, then we simply denote the limit set by $\Lambda(W)$.
\end{dfn}

Now, we claim $\Lambda(W) = E$.
Before proving this, we need to confirm the definition of the limit set is 
independent of the choice of the base point $o$.

\begin{lem}\label{limitdef}
	Let $\{x_k\}_k$ and $\{y_k\}_k$ be two sequences in $D \cap \inte(\conv(\widehat{\Delta}))$ 
	converging to the points $x$ and $y$ in $\partial D$ with respect to the Euclidean metric.
	If there exist a constant $C$ so that $d(x_k, y_k) \le C$ for all $k \in \N$ then $x=y$.
\end{lem}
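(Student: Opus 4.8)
The plan is to argue by contradiction: assume $x \neq y$ and show that the Hilbert distance $d(x_k,y_k)$ must then tend to $+\infty$, contradicting the uniform bound $C$. The essential inputs are the explicit formula for the Hilbert metric together with the strict convexity of the ellipsoid $\partial D$.

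First I would recall from the Observation that $d = d_D = d_H$, so that for each $k$, choosing $a_k,b_k \in \partial D$ with $a_k,x_k,y_k,b_k$ collinear in this order, we may write
\[
	d(x_k,y_k) = \log\left(\frac{\|y_k-a_k\|\,\|x_k-b_k\|}{\|y_k-b_k\|\,\|x_k-a_k\|}\right).
\]
The heart of the matter is to understand the limiting behaviour of the endpoints $a_k$ and $b_k$. Since $x \neq y$ and both lie on $\partial D$, and since an ellipsoid is strictly convex, the Euclidean line through $x$ and $y$ meets $\partial D$ in exactly the two points $x$ and $y$.

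Next I would prove that $a_k \to x$ and $b_k \to y$, which I expect to be the main obstacle. To argue this cleanly, pass to a subsequence along which $a_k \to a^*$ and $b_k \to b^*$ (possible by compactness of $\partial D$); these limits lie on $\partial D$ and on the limiting line, which is the line through $x$ and $y$, the direction $(y_k-x_k)/\|y_k-x_k\|$ converging precisely because $x \neq y$. The strict ordering $a_k,x_k,y_k,b_k$ passes in the limit to the non-strict ordering $a^*,x,y,b^*$; since $\{x,y\}$ are the only boundary points on this line, strict convexity forces $a^* = x$ and $b^* = y$. As every subsequential limit of $\{a_k\}$ (resp. $\{b_k\}$) equals $x$ (resp. $y$), the full sequences converge: $a_k \to x$ and $b_k \to y$. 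Strict convexity is indispensable here, without it a supporting line could meet $\partial D$ in a nondegenerate segment and the endpoints need not converge to $x$ and $y$.

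Finally I would take the limit in the displayed formula. The two numerator factors converge to $\|x-y\| > 0$, while both denominator factors $\|y_k-b_k\|$ and $\|x_k-a_k\|$ tend to $0$ since $y_k,b_k \to y$ and $x_k,a_k \to x$; hence the argument of the logarithm diverges and $d(x_k,y_k) \to +\infty$. This contradicts $d(x_k,y_k) \le C$ for all $k$, so $x=y$. Every step other than the endpoint convergence is a direct substitution and passage to the limit.
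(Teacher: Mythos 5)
Your proof is correct, and while it shares the paper's skeleton (choose the chord endpoints $a_k,b_k$, use the Hilbert metric formula, pass to the limit), it runs on genuinely different fuel. The paper argues directly, not by contradiction, and stays inside the bilinear-form formalism: from $d(x_k,y_k)\le C$ it extracts $q(y_k-a_k)\,q(x_k-b_k)\le e^{2C}C'\,q(x_k-a_k)$ with $C'$ a compactness bound, asserts $a_k\to x$ so that the right-hand side tends to $0$, and then concludes from the algebraic fact that for $u,v\in\partial D\subset Q$ one has $q(u-v)=-2B(u,v)=0$ if and only if $u=v$ --- the same identity that drives Proposition \ref{icchi} --- with no appeal to strict convexity and no blow-up of the distance. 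You instead assume $x\neq y$, switch to the Euclidean form $d_H$ of the metric via the Observation, prove $a_k\to x$ and $b_k\to y$ by a subsequence-plus-strict-convexity argument, and let the logarithm diverge. Two things are worth noting. First, your endpoint-convergence step is precisely what the paper glosses over: its unconditional claim that ``$a_k\to x$ since $x_k\to x\in\partial D\subset Q$'' is actually false when $x=y$ (fix a chord $[a^*,x]$ and let $x_k,y_k\to x$ along it in the given order; then $a_k=a^*\not\to x$), and is only valid under the hypothesis $x\neq y$, which your contradiction framing legitimately supplies while the paper's direct framing does not --- the paper's conclusion survives only because the case $x=y$ is vacuous, so your write-up repairs a small gap rather than merely matching the original. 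Second, as to what each approach buys: your argument is elementary and works verbatim for the Hilbert metric on any bounded strictly convex domain, whereas the paper's $q$-computation needs no endpoint dichotomy beyond the squeeze to zero and keeps every estimate expressed through $B$, consistent with how the rest of Section 4 manipulates the form.
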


\begin{proof}
	Recall that $q(x-y) = 0$ if and only if $x=y$ for $x,y \in \partial D$.
	Let $\{a_k\}_k$ and $\{b_k\}_k$ be two sequences in $\partial D$
	associating with $\{x_k\}_k$ and $\{y_k\}_k$ so that
	\[
		d(x_k,y_k) = \frac{1}{2} \log [a_k,x_k,y_k,b_k] 
					 = \frac{1}{2} \log \left(
					   \frac{q(y_k-a_k)\cdot q(x_k-b_k)}{q(y_k-b_k)\cdot q(x_k-a_k)}
					   \right)
	\]
	for all $k \in \N$.
	Now there exists a constant $C' > 0$ so that for any $z,z' \in D\cup \partial D$,
	$q(z' - z) \le C'$ since $D \cup \partial D$ is compact.
	Then we have 
	\[
		q(y_k-a_k)\cdot q(x_k-b_k)
			\le e^{2C}q(y_k-b_k)\cdot q(x_k-a_k)
			\le e^{2C}C'q(x_k-a_k).
	\]
	We have $a_k \ar x$ since $x_k \ar x \in \partial D \subset Q$.
	Hence the right hand side of the inequality above tends to $0$.
	Hence $q(y_k-a_k)$ or $q(x_k-b_k)$ converges to $0$.
	If $q(y_k-a_k)$ tends $0$ then $y_k \ar x $ since $\{x_k\}_k$ and $\{a_k\}_k$ 
	converge to the same point $x$. 
	For the other case, we also have $y_k \ar x $
	since $y_k$ is on the segment joining $a_k$ and $b_k$ for all $k \in \N$.
\end{proof}

Theorem \ref{main2} immediately follows form the next proposition. 

\begin{prop}\label{icchi}
	Let $\{w_n\}_n$ be a sequence of elements in $W$.  
	For any $\delta \in \Delta$ and $y \in D$,
	$w_n \cdot \widehat{\delta} \ar z \in \partial D$ if and only if 
	$w_n \cdot y \ar z \in \partial D$.
\end{prop}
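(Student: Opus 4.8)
The plan is to prove the equivalence in two stages: first show that the convergence of $w_n\cdot y$ to a boundary point is independent of the choice of interior base point $y\in D$, and then transfer this to the exterior point $\widehat\delta$ by a cross--ratio argument.

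First I would fix $y,y'\in D$. Since every $w\in W$ acts by isometries on $(D,d_D)$ (Proposition \ref{isom}), we have $d_D(w_n\cdot y,w_n\cdot y')=d_D(y,y')$ for all $n$, so the two orbits stay a bounded Hilbert distance apart. Suppose $w_n\cdot y\ar z\in\partial D$. Passing to a subsequence, $w_n\cdot y'$ converges in the compact set $\overline D$ to some $y'_\infty$; if $y'_\infty$ were an interior point then $d_D(w_n\cdot y,w_n\cdot y')\ar\infty$ (the Hilbert distance from a fixed interior point to a sequence tending to $\partial D$ is unbounded), contradicting boundedness. Hence $y'_\infty\in\partial D$, and Lemma \ref{limitdef} applied to $\{w_n\cdot y\}$ and $\{w_n\cdot y'\}$ gives $y'_\infty=z$. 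As every subsequential limit equals $z$, we get $w_n\cdot y'\ar z$. Thus the statement ``$w_n\cdot y\ar z$'' does not depend on $y\in D$, and it remains only to compare a single interior point with $\widehat\delta$.

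Next I would bring in $\widehat\delta$. Since $q(\widehat\delta)=q(\delta)/|\delta|_1^{\,2}>0$, the point $\widehat\delta$ lies strictly outside $\overline D$; moreover $q(w_n\cdot\widehat\delta)=q(\widehat\delta)/|w_n\widehat\delta|_1^{\,2}>0$, so the sequence $w_n\cdot\widehat\delta$ stays outside $\overline D$ and tends to $\partial D$ if and only if $|w_n\widehat\delta|_1\ar\infty$. Choosing an interior point $y\in D$, let $a,b\in\partial D$ be the two points in which the Euclidean line through $\widehat\delta$ and $y$ meets $\partial D$; by Remark \ref{combi} one has $b=s_\delta\cdot a$, and the four points $a,y,b,\widehat\delta$ are collinear. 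The normalized action sends this configuration to the collinear one $w_n\cdot a,\,w_n\cdot y,\,w_n\cdot b,\,w_n\cdot\widehat\delta$ with $w_n\cdot a,w_n\cdot b\in\partial D$, and by Proposition \ref{1} together with Proposition \ref{isom} the cross ratio $[a,y,b,\widehat\delta]$ is preserved. Using the first stage I may assume $w_n\cdot y\ar z$; passing to a subsequence, $w_n\cdot a\ar a_\infty$ and $w_n\cdot b\ar b_\infty$ in $\partial D$, and since $w_n\cdot y$ lies on the chord $[w_n\cdot a,w_n\cdot b]$ the limit $z$ lies on $[a_\infty,b_\infty]$. Strict convexity of $\partial D$ forces $z\in\{a_\infty,b_\infty\}$, and feeding this into the continuous cross--ratio identity should pin down the limit of $w_n\cdot\widehat\delta$ as $z$ as well, giving both implications.

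The hard part will be the degenerate case in which the chord collapses, i.e.\ both endpoints satisfy $w_n\cdot a\ar z$ and $w_n\cdot b\ar z$. Then the image line tends to the tangent line of $\partial D$ at $z$, and the preserved cross ratio becomes an indeterminate $0/0$ form, so by itself it no longer controls the limit of $w_n\cdot\widehat\delta$. To handle this I would carry out a second--order (rate of collapse) analysis of the cross--ratio identity, or equivalently argue through the linear picture: the collapse of the chord corresponds to $w_n\delta$ remaining bounded in $V$, that is $|w_n\widehat\delta|_1\not\ar\infty$, which by the observation above means $w_n\cdot\widehat\delta$ stays a definite distance outside $\overline D$ and so does not converge to a point of $\partial D$ at all. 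The properness of the normalized action established in Section 4, the identity $b=s_\delta\cdot a$ (so that $w_n s_\delta w_n^{-1}$ is the reflection in the wall $H_{w_n\cdot\widehat\delta}$ separating $w_n\cdot a$ from $w_n\cdot b$), and the constraint $q(w_n\cdot\widehat\delta)>0$ are the ingredients I expect to need in order to exclude the possibility that $w_n\cdot\widehat\delta$ converges to a boundary point distinct from $z$. This configuration, in which the root direction $\widehat\delta$ is asymptotically held fixed while the orbit escapes to $\partial D$, is the genuinely delicate point of the argument.
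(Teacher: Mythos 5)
Your first stage is sound and is exactly the paper's own reduction: the paper likewise invokes Lemma \ref{limitdef} to reduce to the single base point $o$. The divergence is in the second stage. The paper does not use cross ratios here at all; it uses the $W$-invariance of the bilinear form to compute
\[
	B(w_n \cdot \widehat{\delta},\, w_n \cdot o)
	= \frac{B(\delta,o)}{|w_n(\widehat{\delta})|_1\,|w_n(o)|_1},
\]
gets $|w_n(o)|_1 \to \infty$ from properness of the normalized action, and imports from \cite[Theorem~2.7]{hlr} the unboundedness $||w_n(\delta)|_1| \to \infty$; then $q(w_n\cdot\widehat{\delta}),\,q(w_n\cdot o)\to 0$ together with $B(w_n\cdot\widehat{\delta},w_n\cdot o)\to 0$ force $q(w_n\cdot\widehat{\delta}-w_n\cdot o)\to 0$, which identifies all subsequential limits. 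So the entire content of the proposition is the unboundedness of the root sequence $w_n(\delta)$ --- precisely the ``genuinely delicate point'' you isolate --- and your proposal leaves exactly that step as a hope (``I would carry out \dots'', ``I expect to need \dots'') rather than an argument. Everything else in your sketch (the chord through $\widehat{\delta}$, $b=s_\delta\cdot a$, strict convexity pinning subsequential limits) is peripheral to this crux.

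Moreover, the route you propose for closing the gap cannot work as stated, because the degenerate case can genuinely occur: root stabilizers in $W$ may be infinite, so no appeal to properness of the action alone excludes it. Take the rank-$5$ system with $m_{i,i+1}=2$ cyclically, $m_{ij}=\infty$ with $B(\alpha_i,\alpha_j)=c$ for the remaining pairs, and $c \in \left(-\tfrac{1+\sqrt{5}}{2},-1\right)$; one checks the signature is $(4,1)$ and all proper subsystems of rank $\ge 3$ have signature $(n'-1,1)$ or are positive definite. Then $s_2$ and $s_5$ both fix $\alpha_1$ (since $B(\alpha_1,\alpha_2)=B(\alpha_1,\alpha_5)=0$), $\langle s_2,s_5\rangle$ is infinite dihedral, and $w_n=(s_2s_5)^n$ satisfies $w_n(\alpha_1)=\alpha_1$ while $w_n\cdot y$ converges to a point of $\partial D$ for every $y\in D$ (the $B$-orthocomplement of $\mathrm{span}(\alpha_2,\alpha_5)$ is positive definite, so $s_2s_5$ has no fixed point in $D\cup\partial D$ outside the two dihedral limit points). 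Thus the chord really can collapse with $w_n\cdot\widehat{\delta}$ frozen strictly outside $\overline{D}$; what is actually needed is injectivity (hence unboundedness, by discreteness of the root system) of the sequence of \emph{roots} $w_n(\delta)$ itself, which is the substance of the cited theorem of Hohlweg--Labb\'e--Ripoll and is an implicit hypothesis in the statement --- your second-order cross-ratio analysis has nothing to grip in this configuration. A further technical point: Propositions \ref{1} and \ref{isom} are established for configurations normalized inside $Q_-^+$, and applying normalization-invariance of the cross ratio to the exterior point $\widehat{\delta}$ needs separate care when $w_n(\delta)$ is a negative root, since then $|w_n(\delta)|_1<0$ and $\widehat{w_n(\delta)}$ lies on the ray opposite to $w_n(\delta)$, violating the ray hypothesis of Proposition \ref{1}.
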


\begin{proof}
	It suffices to show that in the case $y = o$ where $o$ is the 
	normalized negative eigenvector of $B$
	from Lemma \ref{limitdef}. 
	By \cite[Theorem 2.7]{hlr},
	we have that for any injective sequence $\{w_n\}_n$ in $W$ and $\alpha \in \Delta$,
	$||w_n(\alpha)|_1| \ar \infty$.
	This implies $||w_n(\widehat{\alpha})|_1| \ar \infty$.
	On the other hand by Proposition \ref{kihon},
	the normalized action is proper.
	This means $w_n \cdot o$ tends to $\partial D$,
	hence $q(w_n \cdot o) \ar 0$,
	equivalently $|w_n(o)|_1 \ar \infty$.
	
	Since $B(w(p),w(p')) = B(p,p')$ for any $p,p' \in V$ and $w \in W$, 
	it holds that
	\begin{align*}
		B(w_n \cdot \widehat{\alpha},w_n \cdot o) 
			&= B\left(\frac{w_n(\widehat{\alpha)}}{|w_n(\widehat{\alpha})|_1},
				\frac{w_n(o)}{|w_n(o)|_1} \right)\\
			&= \frac{1}{|w_n(\widehat{\alpha})|_1|w_n(o)|_1} B(w_n(\widehat{\alpha}),w_n(o))\\
			&= \frac{1}{|w_n(\widehat{\alpha})|_1|w_n(o)|_1} B(\alpha,o) \rightarrow 0\ \ \ \ 
			(n\rightarrow \infty).
	\end{align*} 
	We have the conclusion.
\end{proof}


\section{The Gromov boundary and the CAT(0) boundary}

\subsection{The Gromov boundaries}
The Gromov boundary of a hyperbolic space is one of the most studied boundary
at infinity.
In this section we define it for an arbitrary metric space due to \cite{BK}.

Let $(X,d,o)$ be a metric space with a base point $o$. 
We denote simply $(* \vert *)$ as the Gromov product with respect to 
the base point $o$.
A sequence $x = \{x_i\}_i$ in $X$ is a {\it Gromov sequece} if
$(x_i\vert x_j)_z \ar \infty$ as $i,j \ar \infty$ for any base point $z \in X$.
Note that if $(x_i\vert x_j)_z \ar \infty\ (i,j \ar \infty)$ for some $z \in X$ then 
for any $z' \in X$ we have $(x_i\vert x_j)_{z'} \ar \infty\ (i,j \ar \infty)$.

We define a binary relation $\sim_G$ on the set of Gromov sequences as follows.
For two Gromov sequences $x = \{x_i\}_i, y=\{y_i\}_i$, $x \sim_G y$ if 
$\liminf_{i,j \ar \infty}(x_i \vert y_j) = \infty$.
Then we say that two Gromov sequences $x$ and $y$ are equivalent $x \sim y$
if there exist a finite sequence $\{x=x_0,\ldots,x_k=y\}$ such that 
\[
	x_{i-1} \sim_G x_i \ \ {\text{for}}\ \ i = 1,\ldots,k.
\]
It is easy to see that the relation $\sim$ is an equivalence relation on the set of 
Gromov sequences.
The {\it Gromov boundary} $\partial_G X$ is the set of 
all equivalence classes $[x]$ of Gromov sequences $x$.
If the space $X$ is a finitely generated group $G$ then the Gromov boundary of $G$ depends on the
choice of the generating set in general.
In this thesis we always define the Gromov boundary of a Coxeter group $W$
using the generating set of the Coxeter system $(W,S)$.
We shall use without comment the fact that every Gromov sequence is equivalent to each of
its subsequences.
To simplify the statement of the following definition, we denote a point $x \in X$ by 
the singleton equivalence class $[x]=[\{x_i\}_i]$ where $x_i=x$ for all $i$.  
We extend the Gromov product with base point $o$ to 
$(X\cup \partial_GX) \times ( X\cup \partial_GX)$ via the equations
\begin{align*}
	(a\vert b) &= 
	\begin{cases}
	\ \inf \left\{\left.\liminf_{i,j \ar \infty} (x_i \vert y_j )\ \right\vert\ 
						[x] = a,\ [y] = b\right\},\quad \text{if}\ a \not= b, \\
	\ \infty, \quad \text{if}\ a = b.
	\end{cases}
\end{align*}

We set
\[
	U(x,r) := \{y\in \partial_G X \ \vert\ 
	(x\vert y) > r\}
\]
for $x \in \partial_G X$ and $r>0$
and define $\mathcal{U} = \{U(x,r)\ \vert\ x \in \partial_G X, r>0\}$.
The Gromov boundary $\partial_G X$ can be regarded as a topological space 
with a subbasis $\mathcal{U}$.

If the space $X$ is $\delta$-hyperbolic in the sense of Gromov,
then this topology is equivalent to a topology defined by the following metric.
For $\epsilon > 0$ satisfying $\epsilon \delta \le 1/5$, we define $d_\epsilon$ as follows:
\[
	d_\epsilon(a,b) = e^{-\epsilon(a\vert b)} \quad (a,b \in \partial_G X).
\]
Then it follows from 5.13 and 5.16 in \cite{vaisala} that 
$d_\epsilon$ is actually a metric.
In this thesis, we always take $\epsilon$ so that $\epsilon \delta \le 1/5$
for all $\delta$ hyperbolic spaces $X$ and 
assume that $\partial_G X$ is equipped with $d_\epsilon$-topology.

If an isometric group action $G \curvearrowright X$ on a Gromov hyperbolic space $X$
is proper and cocompact then 
the group $G$ is also hyperbolic in the sense of Gromov and it is called 
a hyperbolic group (see \cite{vaisala}).


\subsection{The CAT(0) boundaries}
The map we want is given via the {\it CAT(0) boundary} $\partial_I D$ (or $\partial_ID'$)
of $D$ (or $D'$).
That is a space of geodesic rays emanating from a base point.

Assume that $(X,d)$ is a complete geodesic space. 
Fix a point $o$ in X.
We denote $GR(X,o)$ to be the set of geodesic rays emanating from $o$:
\[
	GR(X,o) := \{\gamma \in C([0,\infty),X)\ \vert\ 
			\gamma(0) = o,d(o,\gamma(t)) = len(\gamma\vert_{[0,t]})
			\ \forall t \in [0,\infty)\},
\]
where $C([0,\infty),X)$ denotes the class of continuous maps from $[0,\infty)$ to $X$.
Then we set $GR(X) := \bigcup_{o \in X} GR(X,o)$.
Two rays $\gamma,\eta \in GR(X)$ are equivalent $\gamma \sim \eta$ if the supremum  
$\sup_{t \ge 0}d(\gamma(t),\eta(t))$ is finite.
Let $\partial_{I}X$ be the coset $GR(X)/\sim$ and 
call this the ideal boundary of $X$.
If $X$ is $CAT(0)$ in addition, then for any point $\xi$ in $\partial_{I} X$ 
there exists a unique geodesic $\gamma$ emanating from $o$ so that 
the equivalence class of $\gamma$ equals to $\xi$ (consult with \cite{BH}).
Hence we can identify $GR(X,o)$ and $\partial_I X$ for some fixed $o \in X$ 
whenever $X$ is CAT(0).
In this case we call $\partial_{I} X$ {\it the CAT(0) boundary} of $X$.
Since all geodesic rays in $GR(X,o)$ are unbounded,
$\partial_{I} X$ appears at infinitely far from any point in $X$.

We assume that $(X,d)$ is complete CAT(0) space.
We attach the {\it cone topology} $\tau_C$ to the union $X \cup \partial_{I} X$ then 
it coincides with original topology in $X$.
This topology is Hausdorff and compact whenever $X$ is proper.
We omit the definition of $\tau_C$.
For the detail of the cone topology, see \cite{BH}.
This is defined by using a base point $o \in X$ but is independent of the choice of $o$.
First, notice that for any $x \in X \cup \partial_{I} X$ there exists
a unique geodesic $\gamma_x$ from $o$ to $x$.
In the case where $x \in \partial_{I} X$, we merely mean that $x$ equals to the equivalence 
class of $\gamma_x$.
For $r \in (0,\infty)$ set $X_r = \partial_{I} X \cup (X \setminus \overline{Ball(o,r)})$
where $\overline{Ball(o,r)}$ is the closure of an open ball $Ball(o,r)$
 centered at $o$ whose radius is $r$.
Let $S(o,r)$ be the boundary of $Ball(o,r)$ and let 
$p_r: X_r \ar S(o,r)$ be the projection defined by $p_r(x) = \gamma_x(r)$
and let the set $U(a,r,s),\ r,s>0$, consist of all $x \in X_r$ such that
$d(p_r(x),p_r(a)) < s$.
We notice that $U(x,r,s)$ consists of geodesics passing through the intersection of 
$S(o,r)$ and $Ball(p_r(x),s)$.
Then $\tau_C$ has as a local base at $a \in \partial_I X$ the sets $U(a,r,s),\ r,s>0$.
 
We return to our situation.
Since the region $D'$ and $D$ are both complete CAT(0) space,
CAT(0) boundaries for each space are well defined.
We use the eigenvector $o$ for the negative eigenvalue as the base point in the definition
of CAT(0) boundary and the cone topology.
Furthermore since $D'$ is a subspace of $D$, its CAT(0) boundary $\partial_I(D')$ 
is a subspace of $\partial_I D$.

\begin{prop}\label{cat}
	$\partial_{I} D$ (resp. $\partial_{I} D'$)
	is homeomorphic to 
	$\partial D$ (resp. $\partial D' \setminus D$).
\end{prop}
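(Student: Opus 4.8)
The plan is to realise both boundaries through the family of Euclidean segments issuing from $o$ and then to promote the resulting bijection to a homeomorphism by a compactness argument. Recall from the proof of Proposition~\ref{shortest} that for every $\xi\in\partial D$ the Euclidean segment $[o,\xi)$ is a $d_D$-geodesic ray, and that its Hilbert length is infinite since $d_D(o,\cdot)\to\infty$ as one approaches $\partial D$. Because $(D,d_D)$ is complete CAT(0), each ideal point has a unique geodesic representative starting at $o$, and by uniqueness of geodesics (the straightness property) every such representative is itself a Euclidean segment. First I would define $\Phi:\partial D\ar\partial_I D$ by letting $\Phi(\xi)$ be the class of the ray along $[o,\xi)$, and check that $\Phi$ is a bijection: two distinct boundary points determine segments in distinct directions from $o$, hence inequivalent rays (in a CAT(0) space, rays from a common basepoint are equivalent only if they coincide), so $\Phi$ is injective; and every geodesic ray from $o$ is an unbounded Euclidean segment terminating at a well-defined point of $\partial D$, so $\Phi$ is surjective.

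Next I would observe that $\partial D$ is compact, being a closed bounded subset of the ellipsoid in $\R^n$, and Hausdorff, while $\partial_I D$ equipped with the cone topology $\tau_C$ is compact Hausdorff because $(D,d_D)$ is proper and complete CAT(0). It then suffices to prove that $\Phi$ is continuous, since a continuous bijection from a compact space to a Hausdorff space is automatically a homeomorphism.

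For continuity I would argue against the local base of $\tau_C$. Fix $\xi$ and a basic neighbourhood $U(\Phi(\xi),r,s)$, consisting of those ideal points whose ray from $o$ meets $S(o,r)$ within $d_D$-distance $s$ of $p_r(\Phi(\xi))=\gamma_\xi(r)$. The crucial step is to show that $\eta\mapsto\gamma_\eta(r)$, the point at Hilbert-distance $r$ along $[o,\eta)$, is continuous on $\partial D$. Writing $a(\eta)$ for the second intersection of the line through $o$ and $\eta$ with $\partial D$, the point $a(\eta)$ depends continuously on $\eta$ because $\partial D$ is a smooth strictly convex ellipsoid; and the defining relation $\tfrac12\log[a(\eta),o,\gamma_\eta(r),\eta]=r$ solves for the Euclidean position of $\gamma_\eta(r)$ as a continuous, strictly monotone function of the four collinear points. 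Thus $\eta_k\to\xi$ forces $\gamma_{\eta_k}(r)\to\gamma_\xi(r)$ in the Euclidean metric, and since $\id:(D,d_E)\ar(D,d_D)$ is continuous this yields $d_D(p_r(\Phi(\eta_k)),p_r(\Phi(\xi)))\to 0$. Hence $\Phi(\eta_k)\in U(\Phi(\xi),r,s)$ eventually, proving $\Phi$ continuous and therefore a homeomorphism.

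For the statement about $D'$ I would run the identical scheme on the convex region $D'=D\cap\inte(\conv(\widehat{\Delta}))$, whose $d_D$-geodesics between interior points are again the same Euclidean segments. The only change is bookkeeping at the boundary: a segment $[o,\eta)$ has infinite $d_D$-length exactly when it reaches $\partial D$, i.e. when $\eta\in\partial D'\setminus D$, whereas a segment ending on a bounding simplex face lies in $D$ and has finite length, so it contributes no ray. Consequently the same construction restricts to a bijection $\partial D'\setminus D\ar\partial_I D'$, and the compactness-plus-cross-ratio argument shows it is a homeomorphism. I expect the continuity step to be the main obstacle: everything hinges on showing that the point at a fixed Hilbert-radius moves continuously with the asymptotic direction, and this is precisely where the explicit cross-ratio description of $d_D$ and the strict convexity of the ellipsoid do the essential work, the remaining assertions reducing to standard facts about the cone topology and about continuous bijections of compact spaces.
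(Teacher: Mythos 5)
Your proposal is correct and follows essentially the same route as the paper: both identify geodesic rays emanating from $o$ with Euclidean segments to obtain a bijection with $\partial D$ (resp. $\partial D' \setminus D$), and both then upgrade the bijection to a homeomorphism via the fact that a continuous bijection from a compact space to a Hausdorff space is a homeomorphism. The only cosmetic difference is the direction in which continuity is checked: you verify continuity of $\partial D \ar \partial_I D$ (via the cross-ratio monotonicity argument and compactness of $\partial D$), whereas the paper verifies continuity of the inverse map $\partial_I D \ar \partial D$ (via tube-shaped open sets $\widetilde{U}$ and compactness of $\partial_I D$), with both checks resting on the continuity of the identity map between the Euclidean and Hilbert topologies.
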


\begin{proof}
	It suffices to see this for the case where the entire space $D$.
	Fix a base point $o \in D$.
	For any $\xi \in \partial_I D$, $\xi$ is a geodesic ray from $o \in D$ and is 
	also a geodesic segment with respect to the Euclidean metric in $D$.
	Hence $\xi$ defines a unique endpoint $x$ in $\partial D$.
	Conversely for any $y \in \partial D$ take 
	a segment $[o,y]$ from $o$ to $y$.
	Then $[o,y]$ is a geodesic with respect to the Hilbert metric which tends to infinity.
	Therefore we have a bijection $h : \partial_I D \ar \partial D$.
	
	For any $\gamma \in \partial_I D = \partial D$
	we identify the geodesic $\gamma$ emanating from $o$ in the topology of $d$  
	and a (half-open) segment $[o,\gamma] \setminus \{\gamma\}$ in the Euclidean topology 
	parametrized by $[0,\infty)$ so that $h(\gamma(t)) = \gamma(t)$.
 
	Let $U$ be an open ball with respect to the Euclidean subspace topology
	centered at some point in $\partial D$.
	We set $\widetilde{U}=\bigcup_{\gamma \in U,\ t \in (0,\infty)} \gamma(t)$. 
	Obviously $\widetilde{U}$ is open in the Euclidean topology.
	Then for any $\gamma$ in $U$ and any $t \in [0,\infty)$ there exists
	a Euclidean open ball $Ball_E(\gamma(t))$ centered at $\gamma(t)$ 
	included in $\widetilde{U}$.
	Since the identity map $(D,d) \ar (D,d_E)$ is a homeomorphism,
	we have an open ball $Ball(\gamma(t),s)$ centered at $\gamma(t)$ 
	in $Ball_E(\gamma(t))$ with respect to the topology of $d$.
	Considering the intersection $T$ of sphere $S(o,t)$ and $Ball(\gamma(t),s)$,
	we see that geodesics from $o$ through $T$ is included in $U$.
	This shows that $h$ is a continuous bijection from a compact set to a Hausdorff space 
	and hence it is a homeomorphism.
\end{proof}

\begin{rem}\label{issho}
	If the case space $X$ is a complete proper hyperbolic CAT(0) space 
	then $\partial_G X \simeq \partial_I X$ (\cite[Theorem 2.2 (d)]{BK}).
	Because of this, if the case (i) (resp. the case (iii)) happens
	then $\partial_ID \simeq \partial_G D$ 
	(resp. $\partial_I D' \simeq \partial_G D'$).
\end{rem}

\begin{rem}\label{issho2}
	If the case (iii) happens, then $\Lambda(W)$ is homeomorphic to $\partial D' \setminus D$
	by 
	Theorem \ref{main2}.
	Together with this and Proposition \ref{cat}, we see that
	$\Lambda(W) = \partial D' \setminus D \simeq \partial_I D' \simeq \partial_G D'$.
\end{rem}


\section{The Cannon-Thurston maps}
In this section, we give a proof of Theorem \ref{main}.
Throughout this section, a vector $o$ denotes the normalized (with respect to $|*|_1$) eigenvector corresponding to 
the negative eigenvalue of $B$.

\subsection{The case of $W$ acting without cusps}
We consider when $W$ acts cocompactly or convex cocompactly.
In this case $W$ is hyperbolic in the sense of Gromov.
Moreover for the case $(iii)$, $K'$ is bounded.
Together with the convexity of $D'$,
we see that Proposition \ref{shortest} also holds in this case. 

For simplicity, we mean $\widetilde{D}$ for $D$ or $D'$.
Our purpose in this section is actually to construct a homeomorphism from
$\partial_G(W,S)$ to $\partial \widetilde{D}$ via Remark \ref{issho}, \ref{issho2}.

We define the map $f : W \ar \widetilde{D}$ by $w \mapsto w \cdot o$ 
where $o$ is the eigenvector of the negative eigenvalue.
This map is a quasi-isometry by Lemma \ref{qi}.

It is well known that $f$ extends to a homeomorphism
between $\partial_G(W,S) \cup W$ and $\partial_G \widetilde{D} \cup \widetilde{D}$
(conf. \cite{vaisala}).
Let $\overline{f}$ be the restriction of the homeomorphism above to $\partial_GW$.
Now we recall following two maps.
By the result of Buckley and Kokkendorff \cite{BK},
we know that there exists a homeomorphism 
$g : \partial_G \widetilde{D} \ar \partial_I \widetilde{D}$.
Moreover, for a Gromov sequence $\xi \in \partial_G \widetilde{D}$ 
any unbounded sequence given as a subset of a geodesic ray $g(\xi)$
is equivalent to $\xi$.
On the other hand by Proposition \ref{cat} 
we have a homeomorphism $h : \partial_I \widetilde{D} \ar \partial \widetilde{D}$.

We compose these homeomorphisms.
Let $F = h\circ g\circ \overline{f}$.
Then we have a homeomorphism from $\partial_G(W,S)$ to $\partial \widetilde{D}$.
We verify that $F$ sends $\omega \in \partial_G(W,S)$ to 
the limit point defined by $\{w_k \cdot o\}_k$ for $\{w_k\}_k \in \omega$.
If this is true, then we see that $F$ is $W$-equivariant by the construction.
To see this, we inspect the details of the maps $g$ and $h$.
For our situation, the proof in \cite{BK} says that 
for a Gromov sequence $\{w_k \cdot o\}_k \in F([\{w_k\}_k])$ in $W$, there exists a $\xi$
such that a sequence $\{u_i\cdot o\}_i$ constructed by the same way as 
in the proof of Proposition \ref{shortest} is a short sequence 
included in a bounded neighborhood of $\xi$.
The image of $\xi$ by $h$ is equivalent to $\{u_i\cdot o\}_i$ in the sense of Gromov.
Adding to this, Buckley and Kokkendorff showed that 
$\{u_i\cdot o\}_i$ equivalent to the original sequence $\{w_k\cdot o\}_k$ and hence 
they converge to the same point in $\partial_G \widetilde{D} \setminus D$.
By Remark \ref{issho2} $F$ is the map we want.


\subsection{The case of $W$ acting with cusps}
We know that there exist some Coxeter groups acting on $D$ with cusps.
By Proposition \ref{bunrui}, 
this happens when $\partial D$ is tangent to some faces of $\conv(\Delta)$.
We divide this case into following three cases;
\begin{itemize}
	\item[(i)] 
	there exists at least one pair of simple roots $\alpha, \beta \in \Delta$ 
	so that $B(\alpha,\beta) = -1$,
	\item[(ii)]
	there exists at least one subset $\Delta' \subset \Delta$ 
	whose cardinality is more than $3$
	so that the corresponding matrix $B'$ is positive semidefinite (not positive definite) 
	where $B'$ is the matrix obtained by restricting $B$ to $\Delta'$,
	\item[(iii)]
	or (i) and (ii) happen simultaneously.
\end{itemize}
We only deal with the case (i).
In this case, the dihedral subgroup of $W$ generated by $s_\alpha$ and $s_\beta$ 
is infinite and its limit set is one point.
This means that $D$ is tangent to the segment connecting $\alpha$ and $\beta$.
Hence the fundamental region of $W$ is unbounded.

For the cases (ii) and (iii), 
we have to see other geometric aspects of the Coxeter groups.
We will discuss the existence of the Cannon-Thurston maps 
for the excepted cases in the next section.

Recall that the number $n$ is the rank of $W$ and hence equals to the dimension of $V$. 
Let $\{A_m\}_m$ be a sequence of $n \times n$ matrices 
which are defined as follows.
For each $m \in \N$, we define $A_m$ so that
\[
	A_m(\alpha,\beta) = 
		\begin{cases}
			\  1/m, \quad & \text{if}\ B(\alpha,\beta) = -1,\\
			\  0,         & \text{if}\ \text{otherwise},
		\end{cases}
\]
for each $\alpha,\beta \in \Delta$.
We denote the bilinear form with respect to each $A_m$ by $A_m(v,v')$ for $v,v' \in V$.
Then let $B_m = B - A_m$.

If $B$ has the signature $(n-1,1)$,
then $B_m$ also has the signature $(n-1,1)$ for sufficiently large $m \in \N$.
Therefore for sufficiently large $m$,
our definitions of $Q$,$D$, $D'$, $L$, $K$
can be extended to the bilinear form defined by $B_m$.
We define $Q_m$, $D_m$, $D'_m$, $L_m$, $K_m$ each of them by 
using $B_m$ instead of $B$ in their definitions.
Clearly $B_m$ converges to $B$ as $m$ tends to $\infty$.

Let $v_1,\ldots,v_n$ be eigenvectors of $B$ normalized with respect to the Euclidean norm
so that the matrix $(v_1,\ldots,v_n)$ diagonalize $B$.
Then since each $P_{m,i}(v_i)$ converges to $v_i$,
the matrix diagonalizing $B_m$ also converges to $(v_1,\ldots,v_n)$.
This fact shows that the sequence $\{D_m\}_m$ converges to $D$.

We can consider the $B_m$-reflection of $W$ on $V$ with respect to $B_m$.
We denote this action by $\rho_m$.
For example, the $B_m$-reflection of $\alpha \in \Delta$ can be calculated as
\[
	\rho_m(s_\alpha) (x) = x - 2B_m(x,\alpha)\alpha,\quad (x \in V). 
\]
The normalized action with respect to $B_m$ is defined in the same way as $B$.
We denote this also by $\rho_m$.
Furthermore if $B_m$ has the signature $(n-1,1)$,
then all our lemmas and propositions can be proved
by using the normalized eigenvector $o_m$
corresponding to the negative eigenvalue of $B_m$ instead of $o$.
Therefore if the normalized action $\rho_m$ is (convex) cocompact, 
then there exists a map $F_m$ from the Gromov boundary $\partial_G(W,S)$ of $W$ to
the limit set $\Lambda_{B_m}(W)$ which is homeomorphic. 
In fact we have a $W$-equivariant homeomorphism $F_m : \partial_G(W,S) \ar \Lambda_{B_m}(W)$
for each $m$ since the case (iii) happens.
Note that for sufficiently large $m$, we have $V_0 \cap Q_m = \{{\bf 0}\}$.
Hence we can define the Hilbert metric on $V_1 \cap {Q_m}_-$ where 
${Q_m}_- = \{v \in V\ \vert\ B_m(v,v)<0\}$.
Consider the correspondence between $x \in D_m$ and $y = \R x \cap V_1 \cap {Q_m}_-$.
Then we see that this is an isometry between $D_m$ and $V_1 \cap {Q_m}_-$ and
$W$ equivaliant.
Thus we can regard the normalized action $\rho_m$ as an action of $W$ on $V_1 \cap {Q_m}_-$.

We remark that for any $\alpha \in \Delta$ and $m \in \N$, we have 
$B_m(o,\alpha) = B(o,\alpha) - A_m(o,\alpha) < 0$ since $B(o,\alpha) < 0$ and all coordinates
of $o$ are positive.
Hence $o$ is in $K_m$ for any $m \in \N$.

\begin{lem}\label{kuwashiku}
	Let $o$ be the normalized eigenvector of the negative eigenvalue of $B$.
	There exists a constant $C_1>0$ such that $|w(o)|_1 \ge C_1 |w|$ for any $w \in W$.
\end{lem}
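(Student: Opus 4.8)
The plan is to control the growth of $|w(o)|_1$ letter-by-letter along a geodesic word for $w$, show that each letter contributes a uniformly positive increment, and then telescope. Fix a reduced expression $w = s_{\alpha_k}\cdots s_{\alpha_1}$ with $k=|w|$, and set $w_j = s_{\alpha_j}\cdots s_{\alpha_1}$, so that $w_0 = \id$, $|w_j| = j$, and $w_k = w$. The computation \eqref{hutoushiki} in the proof of Proposition \ref{eigen} gives, for each $j$,
\[
	|w_j(o)|_1 - |w_{j-1}(o)|_1 = -2B\bigl(o, w_{j-1}^{-1}(\alpha_j)\bigr)\, |\alpha_j|_1.
\]
Since $|s_{\alpha_j}w_{j-1}| > |w_{j-1}|$, Remark \ref{bb} shows that $\gamma_j := w_{j-1}^{-1}(\alpha_j)$ has non-negative coordinates; being the image of a simple root under $W$ it is a root, so by Remark \ref{daijoubu} it is a nonzero vector with non-negative coordinates and $|\gamma_j|_1 > 0$.

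Next I would rewrite the increment to expose its sign and size. Writing the negative eigenvalue of $B$ as $-\lambda$ with $\lambda>0$, the identity $\langle o, v\rangle = |v|_1$ (immediate from the definitions, using the orthonormality of $\Delta$) together with $Bo = -\lambda o$ gives $B(o,v) = -\lambda|v|_1$ for every $v \in V$. Hence
\[
	|w_j(o)|_1 - |w_{j-1}(o)|_1 = 2\lambda\, |\alpha_j|_1\, |\gamma_j|_1 > 0 ,
\]
which recovers, in sharpened form, the positivity already seen in Proposition \ref{eigen}. Here $|\alpha_j|_1 = o_{\alpha_j} \ge \min_i o_i =: c_1 > 0$, the positivity coming from Lemma \ref{heimen} and the normalization of $o$.

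The heart of the matter is a uniform lower bound $|\gamma_j|_1 \ge c_2 > 0$ valid for every root with non-negative coordinates. For this I would use that $q$ is $W$-invariant (as in the proof of Proposition \ref{isom}) and $q(\alpha_i) = B(\alpha_i,\alpha_i) = 1$, so every root $\gamma$ lies on the level set $q(\gamma) = 1$; consequently $q(\widehat{\gamma}) = q(\gamma)/|\gamma|_1^2 = |\gamma|_1^{-2}$. On the other hand, writing $\gamma = \sum_\beta v_\beta \beta$ with $v_\beta \ge 0$, one has $\widehat{\gamma} = \sum_\beta (v_\beta o_\beta/|\gamma|_1)\,\widehat{\beta}$, a convex combination, so $\widehat{\gamma} \in \conv(\widehat{\Delta})$. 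Since $\conv(\widehat{\Delta})$ is compact and $q$ is continuous, $M := \max_{x \in \conv(\widehat{\Delta})} q(x)$ is finite and positive (it dominates $q(\widehat{\alpha_i}) = o_i^{-2} > 0$). Therefore $|\gamma|_1^{-2} = q(\widehat{\gamma}) \le M$, that is $|\gamma|_1 \ge M^{-1/2} =: c_2 > 0$.

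Combining the three estimates, each increment is at least $2\lambda c_1 c_2 =: c_0 > 0$, and telescoping over the $|w|$ letters yields
\[
	|w(o)|_1 \ge |o|_1 + c_0|w| \ge c_0|w| ,
\]
so $C_1 = c_0$ works. I expect the only genuine difficulty to be the uniform estimate $|\gamma_j|_1 \ge c_2$: the recursion and the non-negativity of coordinates are routine consequences of Proposition \ref{eigen} and Remark \ref{bb}, but it is the lower bound on $|\gamma_j|_1$ that prevents the per-letter increments from degenerating to zero, and it is precisely there that the compactness of the simplex $\conv(\widehat{\Delta})$ and the fact that all roots sit on $q = 1$ enter.
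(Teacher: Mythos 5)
Your proof is correct, and while it shares the paper's skeleton --- telescoping the increment formula \eqref{hutoushiki} along a reduced word $w = s_{\alpha_k}\cdots s_{\alpha_1}$, with Remark \ref{bb} guaranteeing that each $\gamma_j = w_{j-1}^{-1}(\alpha_j)$ is a root with non-negative coordinates --- the key lower bound is genuinely different. The paper imports \cite[Lemma 2.10 (ii)]{hlr}, a root-depth estimate giving $|w(\alpha)|_1 \ge C'|w|^{1/2}$ for positive roots, so its $j$-th increment is bounded below by a quantity of order $\sqrt{j}$, and it closes the argument by induction ($Ck + 2\lambda\lambda'C'k^{1/2} \ge C(k+1)$ once $C \le 2\lambda\lambda'C'$). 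You replace that external input with a self-contained compactness argument: since $q$ is $W$-invariant and $q(\alpha)=1$ on simple roots, every root satisfies $q(\gamma)=1$, hence $q(\widehat{\gamma}) = |\gamma|_1^{-2}$; since $\widehat{\gamma} \in \conv(\widehat{\Delta})$ (your convex-combination computation is right) and $M = \max_{\conv(\widehat{\Delta})} q$ is finite and positive, you get the uniform bound $|\gamma|_1 \ge M^{-1/2}$ for all positive roots. Combined with your clean identity $B(o,v) = -\lambda|v|_1$ (which the paper also uses implicitly, in a slightly garbled form, in its estimate $-B(o,w(\alpha)) = \lambda\sum_\beta o_\beta w(\alpha)_\beta$), each letter contributes an increment of exactly $2\lambda|\alpha_j|_1|\gamma_j|_1 \ge 2\lambda c_1 c_2$, and the linear bound follows. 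What each approach buys: yours is elementary and avoids citing the hlr estimate entirely, at the cost of only an $O(1)$ per-step gain; the paper's per-step gain of order $\sqrt{j}$ would, if summed rather than fed into the induction, actually yield the stronger growth $|w(o)|_1 \gtrsim |w|^{3/2}$, though the paper does not exploit this and the lemma as stated needs only your weaker bound.
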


\begin{proof}
	Let $\lambda > 0$ be the absolute value of the negative eigenvector of $B$ 
	hence $B o = -\lambda o$. 
	Note that all coordinates of $o$ are positive by Lemma \ref{korekore}.
	If $|w|=1$ then there exists $\alpha \in \Delta$ such that $w = s_\alpha$.
	Then we have 
	\[
		|s_\alpha (o)|_1 = |o|_1 -2B(o,\alpha)|\alpha|_1
						 = 1 + 2\lambda ( o,\alpha )|\alpha|_1 > 1 = |s_\alpha|.
	\]

	Before moving to the inductive step we remark the following.
	By \cite[Lemma 2.10 (ii)]{hlr} there exists a constant $C'$ such that 
	for $w \in W$ and $\alpha \in \Delta$ with $w(\alpha) \in \mathrm{cone}(\Delta)$,
	 $|w(\alpha)|_1 \ge C'|w|^{\frac{1}{2}}$
	where $\mathrm{cone}(\Delta)$ is the cone spanned by $\Delta$.
	Since $o (= \sum_{\delta \in \Delta} o_\delta \delta)$ is in the convex hull of $\Delta$  
	each coordinate $o_\delta$ of $o$ satisfies $0\le o_\delta \le 1$.
	Letting $\lambda' = \min_{\delta \in \Delta} o_\delta$, 
	for $w \in W$ and $\alpha \in \Delta$ with $w(\alpha) \in \mathrm{cone}(\Delta)$ we have  
	\[
		-B(o,w(\alpha))= \lambda\sum_\beta o_\beta {w(\alpha)}_\beta 
					   \ge \lambda\lambda' |w(\alpha)|_1
					   \ge \lambda\lambda' C'|w|^{\frac{1}{2}}
	\]
	where ${w(\alpha)}_\beta$ denotes the $\beta$-th coordinate of $w(\alpha)$.
		
	For the inductive step we take an arbitrary $w \in W$ with $|w| = k+1$ ($k \in \N$) and 
	assume that for any $w' \in W$ with $|w'|\le k$ we have $|w'(o)|_1 \ge C|w'|$ for some 
	universal constant $C\ge1$.
	We take $w'\in W$ so that $w = s_\alpha w'$ with $|w'| = k$ for some $\alpha \in \Delta$.
	Then by Remark \ref{bb} all coordinates of ${w'}^{-1}(\alpha)$ are non-negative.
	From the argument above, we have
	\[
		|w(o)|_1 =   |w'(o)|_1 -2B(o,{w'}^{-1}(\alpha))
				 \ge Ck + 2 \lambda\lambda' C'k^{\frac{1}{2}} 
				 \ge C(k+1).
	\]
	if $C \le 2 \lambda\lambda'C'$.
	By taking $C_1$ so that $C_1 \le \min\{1, C\}$,
	the conclusion follows.
\end{proof}

Let $c_0 > 1$ be the maximum operator norm of $S$. 
More precisely, we set
$
	c_0=\max_{s \in S} \max_{x \in S^{n-1}} \|s(x)\|
$
where $S^{n-1}$ is the sphere in $V$ centered at $0$.
Then for any $w \in W$ with $|w| =k$, 
we have $c^k \ge \|w(o)\|$.
Since the Euclidean norm $\|*\|$ is comparable to $|*|_1$ in the cone $Q_-^+$,
there exists a constant $C_{2,0}$ such that $C_{2,0}c_0^k \ge |w(o)|_1$.
We can take these constants $C_{2,m}$ and $c_m$ for each $\rho_m(W)$ ($m\in \N$).
Since the sequence $\{B_m\}_m$ converges to $B$,
sequences $\{C_{2,m}\}_m$ and $(c_m)$ must converge to $C_{2,0}$ and $c_0$.
Thus there must exist the maximum
\[
	C_2 = \max_{m \in \N\cup\{0\}}C_{2,m},\quad \text{and}\quad c = \max_{m \in \N\cup\{0\}}c_m.
\]

\begin{prop}
	Assume that the normalized action of $W$ includes rank $2$ cusps.
	There exists a continuous $W$-equivariant surjection $\iota:\Lambda(\rho_1(W)) \ar \Lambda(W)$.
\end{prop}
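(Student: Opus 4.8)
The plan is to define $\iota$ by transporting a point of $\Lambda(\rho_1(W))$ along a combinatorial sequence, reading off its limit in the original $B$--geometry, and using the perturbed forms $B_m \ar B$ to control that limit. Since the rank $2$ cusps arise from pairs with $B(\alpha,\beta)=-1$, each such pair satisfies $B_1(\alpha,\beta)=-1-1=-2<-1$, so these affine dihedral subgroups become hyperbolic, $\rho_1$ falls into case (iii), and $F_1\colon \partial_G(W,S)\ar \Lambda(\rho_1(W))$ is the homeomorphism produced above. For $\xi \in \Lambda(\rho_1(W))$ I would put $\omega = F_1^{-1}(\xi)\in \partial_G(W,S)$ and apply Proposition \ref{shortest} to the convex cocompact action $\rho_1$ to obtain a short sequence $\{w_k\}_k$ whose orbit $w_k\cdot o_1$ (with respect to $\rho_1$) converges to $\xi$. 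I would then set
$$\iota(\xi):=\lim_{k\ar\infty} w_k\cdot o,$$
the limit of the $B$--normalized orbit of $o$. As a first step one checks that this limit, when it exists, lies on $\partial D$: by Lemma \ref{kuwashiku} we have $|w_k(o)|_1 \ge C_1 k \ar \infty$, and since $q$ is $B$--invariant, $q(w_k\cdot o)=q(o)/|w_k(o)|_1^{\,2}\ar 0$, so every accumulation point of $\{w_k\cdot o\}_k$ belongs to $\partial D$, that is, to $\Lambda(W)$.

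The crux, and what I expect to be the main obstacle, is to show that $\{w_k\cdot o\}_k$ converges to a single point of $\partial D$ and that this point depends only on $\xi$. The natural route is to identify $\iota(\xi)$ with $\lim_{m\ar\infty} F_m(\omega)$: for each admissible $m$ the point $F_m(\omega)\in\Lambda(\rho_m(W))$ is the limit in $k$ of the orbit $w_k\cdot o_m$ taken with respect to $\rho_m$, and since $B_m \ar B$ the ellipsoids $D_m$ and the limit sets $\Lambda(\rho_m(W))$ converge to $D$ and to $\Lambda(W)$. One is thereby led to interchange the limit in the word length $k$ with the limit in the perturbation parameter $m$, and this is exactly where the uniform estimates $|w(o_m)|_{1} \le C_2\, c^{|w|}$ together with the uniform lower bound coming from Lemma \ref{kuwashiku} enter, since they bound the rate at which each orbit escapes to its respective boundary independently of $m$. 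The delicate feature is that near a parabolic direction the form $B_1$ retains two limit points on the segment $[\widehat{\alpha},\widehat{\beta}]$ while $B$ has only the single tangency point; hence the estimates must be uniform precisely where $\iota$ collapses a pair of boundary points to one cusp, which is the source of non--injectivity. Carrying out the interchange rigorously, and verifying that any two short sequences with the same $\rho_1$--limit (hence the same $\omega$, as $F_1$ is a homeomorphism and the sequence path fellow--travels $[o_1,\xi]_1$) yield the same value, is the technical heart of the argument.

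Granting the convergence, the remaining properties follow formally. For $W$--equivariance, if $w_k\cdot o_1 \ar \xi$ with respect to $\rho_1$ then $v w_k\cdot o_1 \ar v\cdot\xi$, and since the $B$--action of the fixed element $v$ is a Euclidean homeomorphism of $D\cup\partial D$ we obtain $\iota(v\cdot\xi)=\lim_k (v w_k)\cdot o = v\cdot\iota(\xi)$. Continuity would come from the description $\iota = \lim_m F_m\circ F_1^{-1}$ together with the uniform bounds: a convergent sequence $\xi_n \ar \xi$ pulls back to $\omega_n \ar \omega$ in $\partial_G(W,S)$, and the uniform control of the escape rates forces $\iota(\xi_n)\ar\iota(\xi)$. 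Finally, for surjectivity I would use Theorem \ref{main2} to write an arbitrary $z\in\Lambda(W)=E(W)$ as a limit $z=\lim_k u_k\cdot o$ of the $B$--orbit along a short sequence, the independence of the base point being Proposition \ref{icchi}; since $F_1$ maps onto $\Lambda(\rho_1(W))$, the class $[\{u_k\}_k]\in\partial_G(W,S)$ determines $\xi=F_1([\{u_k\}_k])$ with $\iota(\xi)=z$. This produces the continuous $W$--equivariant surjection $\iota\colon \Lambda(\rho_1(W))\ar\Lambda(W)$, which fails to be injective exactly along the cusp directions.
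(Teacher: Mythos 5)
You set the stage exactly as the paper does: since each cusp pair has $B(\alpha,\beta)=-1$, the perturbed form gives $B_1(\alpha,\beta)=-2<-1$, so $\rho_1$ falls into case (iii) and $F_1\colon\partial_G(W,S)\ar\Lambda(\rho_1(W))$ is a homeomorphism; you then define $\iota(\xi)=\lim_k w_k\cdot o$ along a short sequence representing $F_1^{-1}(\xi)$, and your formal verifications (equivariance, surjectivity via Theorem \ref{main2} and Proposition \ref{icchi}, the remark on non-injectivity at cusps) all match the paper. But you explicitly defer the decisive step --- that $\{w_k\cdot o\}_k$ converges and that the limit depends only on $\xi$ --- calling it ``the technical heart,'' and the tools you name for it are insufficient. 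The escape-rate bounds $|w(o)|_1\ge C_1|w|$ (Lemma \ref{kuwashiku}) and $|w(o)|_1\le C_2c^{|w|}$ control only how fast each orbit leaves compact sets; they say nothing about whether the \emph{directions} of $w(o)$ and $\rho_m(w)(o)$ agree asymptotically. Two orbits can escape at identical rates and still converge to distinct boundary points --- indeed $\{(s_\alpha s_\beta)^k\}_k$ and $\{(s_\beta s_\alpha)^k\}_k$ under $\rho_1$ versus $B$ exhibit precisely this, as your own non-injectivity remark shows. So the interchange of limits in $k$ and $m$ cannot be carried out from the estimates you cite; this is a genuine gap, not an omitted routine verification.

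What the paper supplies at this point is a quantitative comparison in the $q$-metric on $V_1$ (legitimate because $B$ is positive definite on $V_0$ by Remark \ref{zerodake}): expanding
\[
	q(w\cdot o-\rho_m(w)\cdot o)
	= \frac{q(o)}{|w(o)|_1^2}
	-2\frac{B(w(o),\rho_m(w)(o))}{|w(o)|_1\,|\rho_m(w)(o)|_1}
	+\frac{q(\rho_m(w)(o))}{|\rho_m(w)(o)|_1^2},
\]
one sees that everything hinges on the cross term $B(w(o),\rho_m(w)(o))$, which is exactly the ``angle'' datum your proposal never controls. The paper proves by induction on word length, using Remark \ref{bb} and the explicit form of $A_m$, the inequality $-B(w(o),\rho_m(w)(o))\le -q(o)$ for all $w\in W$, together with the bound $q(\rho_m(w)(o))\le q(o)+(Cq(o))^2|\rho_m(w)(o)|_1^2/m$; combining these with Lemma \ref{kuwashiku} yields $q(w\cdot o-\rho_m(w)\cdot o)\ar 0$ as $k,m\ar\infty$ uniformly over short sequences, which simultaneously gives well-definedness of $\iota$ and its continuity (your $\iota=\lim_m F_m\circ F_1^{-1}$ picture then becomes a uniform limit of continuous maps). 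To complete your argument you would need to discover and prove this inductive cross-term inequality, or an equivalent direction-comparison estimate; without it the proposal is a correct outline of the paper's strategy with its core missing.
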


\begin{proof}
	Since $Q$ and $V_0$ meet only at $0$, $B$ is positive definite on $V_0$.
	Hence $B$ defines an inner product on $V_0$ and it gives a metric on $V_0$
	by $q(x-y)^{\frac{1}{2}}$.
	It is easy to see that this metric induces to $V_1$ and it is comparable to 
	the Euclidean metric.
	
	Let $o$ be the normalized eigenvector for the negative eigenvalue $-\lambda$ of $B$.
	Notice that $o \in K_m$ for any $m \in \N$ since $B_m(o,\alpha) = B(o,o) -A_m(o,\alpha)<0$.
	We claim that for any short sequence $\{w_k\}_k$ in $W$,
	if $\rho_m(w_k)\cdot o \ar \xi \in \partial D$ as $k,m \ar \infty$ then
	$w_k\cdot o \ar \xi$ as $k \ar \infty$.
	This ensures that the correspondence $\iota(\xi_1) = \xi$ for each 
	$\xi_1 \in \Lambda(\rho_1(W))$ is actually a map where $\xi \in \Lambda(W)$ 
	is the equivalence class of the sequence $\{w_k\cdot o\}_k$ for $\{w_k\}_k$ defining $\xi_1$.
	If $\iota$ is well-defined then it is obviously $W$-equivariant and surjective.
	To show the continuity of $\iota$, it suffices to see that
	$ q(w\cdot o-\rho_m(w)\cdot o) \ar 0$ as $k,m \ar \infty$ uniformly.
		
	Fix $m \in \N$ arbitrarily.
	For any $x \in \mathrm{cone}(\Delta)$ and any $\alpha \in \Delta$ we have
	\[
		|\rho_m(s_\alpha)(x)|_1 = |s_\alpha(x) + 2A_m(x,\alpha)\alpha)|_1 \ge |s_\alpha(x)|_1
		\qquad \text{and} \qquad
		A_m(x,\alpha) \le \frac{|x|_1}{m}.
	\]
	The first inequality shows that for any $x \in D$ 
	whose orbit $W(x)$ is included in $\mathrm{cone}(\Delta)$,
	we have $|\rho_m(w)(x)|_1 \ge |w(x)|_1$ for any $w \in W$.
	Since $B(s_\alpha(x),\alpha) = -B(x,\alpha)$, the second inequality implies the following;
	\begin{align*} 
		-B(s_\alpha(x),\rho_m(s_\alpha)(x)) 
		&=   -B(s_\alpha(x), s_\alpha(x) + 2A_m(x,\alpha)\alpha)\\
		&\le -q(x) + 2A_m(x,\alpha)B(x,\alpha)\\
		&\le -q(x) + 2C\frac{|x|_1}{m}B(x,\alpha),
	\end{align*}
	for any $x \in \mathrm{cone}(\Delta)$ and any $\alpha \in \Delta$
	where $C$ is a constant depending on $B$.

	Now we claim that $-B(w(o), \rho_m(w)(o)) \le -q(o)$ for any $w \in W$. 
	We show this by the induction for the word length.  
	If $|w| = 1$ then $w = s_\alpha$ for some $\alpha \in \Delta$.
	Hence the argument above gives
	\[
		-B(s_\alpha(o),\rho_m(s_\alpha)(o)) \le -q(o) - \frac{2C\lambda o_\alpha}{m} \le -q(o),
	\]
	where $o_\alpha$ denotes the $\alpha$-th coordinate of $o$.	
	For any $w \in W$ satisfying the condition $-B(w(o),\rho_m(w(o)) \le -q(o)$,
	if $|s_\alpha w| = |w| + 1$ for $\alpha \in \Delta$ then 
	\begin{align*}
		&-B(s_\alpha w(o),\rho_m(s_\alpha)\rho_m(w)(o))\\
			&\quad\quad\quad\quad
			= -B(s_\alpha w(o),s_\alpha(\rho_m(w)(o))+2A_m(\rho_m(w)(o),\alpha)\alpha)\\
			&\quad\quad\quad\quad
			= -B(s_\alpha w(o),s_\alpha(\rho_m(w)(o)))
				-2A_m(\rho_m(w)(o),\alpha)B(s_\alpha w(o),\alpha)\\
			&\quad\quad\quad\quad
			= -B(s_\alpha w(o),s_\alpha(\rho_m(w)(o)))
				+2A_m(\rho_m(w)(o),\alpha)B(w(o),\alpha)\\
			&\quad\quad\quad\quad
			= -B(s_\alpha w(o),s_\alpha(\rho_m(w)(o)))
				-2\lambda A_m(\rho_m(w)(o),\alpha)(o,w^{-1}(\alpha))\\
			&\quad\quad\quad\quad
			\le-q(o),
	\end{align*}
	where $(,)$ denotes the Euclidean inner product.
	Furthermore we have
	\begin{align*}
		q(\rho_m(w)(o))
			&= B(\rho_m(w)(o),\rho_m(w)(o))\\
			&= B_m(\rho_m(w)(o),\rho_m(w)(o))+A_m(\rho_m(w)(o),\rho_m(w)(o))\\
			&= B_m(o,o)+A_m(\rho_m(w)(o),\rho_m(w)(o))\\
			&= q(o)-A_m(o,o)+A_m(\rho_m(w)(o),\rho_m(w)(o))\\
			&\le q(o)+(Cq(o))^2 \frac{|\rho_m(w)(o)|_1^2}{m}
	\end{align*}
	for any $w \in W$.
	Putting these inequalities together we deduce that for $w \in W$ with $|w| = k$,
	\begin{align*}
		q(w\cdot o-\rho_m(w)\cdot o) 
			&= \frac{q(o)}{|w(o)|_1^2}-2\frac{B(w(o), \rho_m(w)(o))}{|w(o)|_1|\rho_m(w)(o)|_1}
				+\frac{q(\rho_m(w)(o))}{|\rho_m(w)(o)|_1^2}\\
			&\le \frac{q(o)}{|w(o)|_1^2}-2\frac{q(o)}{|w(o)|_1|\rho_m(w)(o)|_1}
				+\frac{q(o)}{|\rho_m(w)(o)|_1^2}+\frac{1}{m}\\
			&\le \frac{q(o)}{C_1k^2}-2\frac{q(o)}{C_2k^{2}}
				+\frac{q(o)}{C_1k^2}+\frac{1}{m}.
	\end{align*}
	This shows that the convergence of $ q(w\cdot o-\rho_m(w)\cdot o) \ar 0$ as $k,m \ar \infty$
	does not depend on the short sequence $(w_k)$.
	Thus $\iota$ is well-defined and continuous.
\end{proof}

Considering the composition $F' = \iota \circ F_1$,
we have the map which is surjective, continuous and $W$-equivariant.

If $B(\alpha,\beta) = -1$ for some $\alpha,\beta \in \Delta$ 
then the Coxeter subgroup $W'$ generated by $\{s_\alpha,s_\beta\}$ is affine.
Since an affine Coxeter group has only one limit point, 
$\{(s_\alpha s_\beta)^k\cdot o\}_k$ and $\{(s_\beta s_\alpha)^k \cdot o\}_k$ converges to the same 
limit point.
However in the Gromov boundary of $(W,S)$,
$\{(s_\alpha s_\beta)^k\}_k$ and $\{(s_\beta s_\alpha)^k\}_k$ lie in distinct equivalence classes.
In fact, considering another action of $(W,S)$ defined by 
another bi-linear form $B'$ such that $B'(\alpha,\beta) < -1$,
then the limit set $\Lambda_{B'}(W') \subset \Lambda_{B'}(W)$ consists of two points.
In this case the limit points of
$\{(s_\alpha s_\beta)^k\cdot o\}_k$ and $\{(s_\beta s_\alpha)^k \cdot o\}_k$
are distinct.
On the other hand the map $\partial_G(W,S) \ar \Lambda_{B'}(W)$ is well defined 
hence $F'$ cannot be an injection.

\end{document}